\newcommand{\stars}{}
\DeclareRobustCommand{\stars}[1]{\stars@{#1}}
\newcommand{\stars@}[1]{%
  \ifcase#1\relax\or\stars@one\or\stars@two\or\stars@three\or\stars@four
  \else ??\fi
}
\newcommand{\stars@char}{$\scriptstyle*$}
\newcommand{\stars@base}[1]{%
  $\m@th\vcenter{\offinterlineskip\ialign{\hfil##\hfil\cr#1\crcr}}$%
}
\newcommand{\stars@one}{%
  \stars@base{\stars@char}%
}
\newcommand{\stars@two}{%
  \stars@base{\stars@char\cr\stars@char}%
}
\newcommand{\stars@three}{%
  \stars@base{\stars@char\cr\stars@char\stars@char}%
}
\newcommand{\stars@four}{%
  \stars@base{\stars@char\stars@char\cr\stars@char\stars@char}%
}
\newcommand*\dif{\mathop{}\!\mathrm{d}}
\newcommand{\Prob}{\mathbb{P}}
\newcommand{\R}{\mathbb{R}}
\newcommand{\segN}[1]{ [\![ #1 ]\!] }
\newcommand{\diag}{\text{Diag}}
\newcommand{\LogLik}[1]{ \mathcal{L}( #1 ) }
\DeclareMathOperator*{\argmin}{arg\,min}
\newtheorem{remark}{Remark}
\newtheorem{corollary}{Corollary}
\newtheorem{definition}{Definition}
\newtheorem{lemma}{Lemma}
\newtheorem{proposition}{Proposition}
\newtheorem{theorem}{Theorem}
\newtheorem{thm}{Theorem}
\newtheorem{Proposition}{Proposition}
\newtheorem{assumption}[thm]{Assumption}
\newtheorem{defi}[thm]{Definition}
\newtheorem*{assumption*}{Assumption}
\begin{document}

\title{ A sparsity test for multivariate Hawkes processes \vspace{-2em}}
\author[]{\footnotesize	 antoine lotz \vspace{-0.5em}}
\address{\footnotesize	 \textsc{Paris-Dauphine University}, \textsc{PSL}, Paris, France
\\\vspace{-0.2em}
\textsc{EDF}-\textsc{Lab} , \textsc{EDF} \textsc{R}\&\textsc{D}, Palaiseau, France}
\email{antoine.lotz@dauphine.eu}
\begin{abstract}
    Multivariate Hawkes processes (\textsc{mhp}) are a class of point processes in which events at different coordinates interact through mutual excitation.  The weighted adjacency matrix of the \textsc{mhp} encodes the strength of the relations, and shares its support with the causal graph of interactions of the process. We consider the problem of testing for causal relationships across the dimensions of a marked \textsc{mhp}. The null hypothesis is that a joint group of adjacency coefficients are null, corresponding to the absence of interactions. The alternative is that they are positive, and the associated interactions do exist. To this end, we introduce a novel estimation procedure in the context of a large sample of independent event sequences. We construct the associated likelihood ratio test and derive the asymptotic distribution of the test statistic as a mixture of $\chi^2$ laws. We offer two applications on financial datasets to illustrate the performance of our method.  In the first one, our test reveals a deviation from a static equilibrium in bidders' strategies on retail online auctions. In the second one, we uncover some factors at play in the dynamics of German intraday power prices.  \\

   \noindent \textbf{Mathematics Subject Classification (2020)}: 62F03,62F12, 62F30, 60G55\\
\noindent \textbf{Keywords}:  Parametric estimation; Non standard likelihood theory, Likelihood ratio test; Multivariate Hawkes processes
\\
   
\end{abstract}

\maketitle

\begin{center}
    
\end{center}

\section{Introduction}

\subsection{Motivation} Hawkes processes are self-exciting point processes in which the  probability of an event appearing increases in the aftermath of past occurrences. Multivariate Hawkes processes (\textsc{mhp}) extend this property to interdependent sequences, wherein events in a coordinate of the process are also contingent upon the past of other coordinates. They provide an ideal model for the dynamics of interacting systems and have demonstrated this capacity in diverse applications ranging from neuroscience~\cite{Reynaud}~\cite{Martinez}, to high-frequency financial data~\cite{bacryBompaireGaiffas},  or  information propagation on media networks~\cite{HawkesTwitter}. In this work, we consider the problem of testing for temporal influence between the coordinates of a marked \textsc{mhp}. The hypotheses for our test express naturally in terms of characteristics of the process, and we informally introduce the relevant features of the model hereafter. \\

Suppose one observes a $K$-th dimensional counting process $(\boldsymbol{N}_t)=(N_{k,t})$ over a fixed time-frame $[0,T]$.  The jumps of the $k$-th coordinate occur at distinct times $(t^k_i)$ with $1 \leq i \leq N^k_T$, and to each jump time $t_i^k$ is associated a mark $X_i^k$ with values in $\R^m$. The law of $(\boldsymbol{N}_t)$ is characterised by its predictable intensity process $(\lambda_{1,t}, \cdots, \lambda_{K,t})$, where each $\lambda_{k,t}$ can be thought of as the instantaneous probability of an event arriving at coordinate $k$ at time $t$.  In this paper, we are interested in the class of linear marked \textsc{mhp}s, which describes processes with intensity of the form

\begin{equation}
    \lambda_{k,t} 
    =
    \mu_k \Big( \frac{t}{T}\Big)
    +
    \sum_{k=1}^K \sum_{t^l_i < t}
    \varphi_{kl}(t-t^l_i,X^l_i),
    \label{equ:intensity}
\end{equation}

where  the $\varphi_{kl} \colon [0,\infty) \times \R^m \mapsto [0,\infty)$ are integrable functions referred to as kernels of the Hawkes process. The intensity process~\eqref{equ:intensity} is constructed so that an event at coordinate $l$ at time $s$ increases the intensity at coordinate $k$ at time $t$ by $\varphi_{kl}(t-s)$.  The influence between different coordinates of the process are thus encoded into $\varphi=(\varphi_{kl})$, and selecting interactions simply amounts to selecting null kernels with the question typically phrased in terms of the $L^1$ norms of the kernels. To this aim, a significant effort has been dedicated  to penalization-based approaches, with a variety of criteria being successfully developed for least-square (Hansen \textit{et al.}~\cite{Hansen}, Bacry \textit{et al.}~\cite{bacryBompaireGaiffas}),  and maximum likelihood estimation ( Zhou \textit{et al.}~\cite{ZhouZha}, Xu \textit{et al.}~\cite{Xu}, Salehi~\cite{Salehi}, Sulem \textit{et al.}~\cite{sulem2023bayesian}  and Goda~\cite{Goda}). Testing for causal influence is a natural adjacent question to variable selection for (possibly marked) \textsc{mhp}s. Yet, the topic  seems to have attracted a more modest interest. The only available test is a multiple comparison procedure introduced by Bonnet \textit{et al.}~\cite{Martinez} as a by-product of their estimation method for inhibiting \textsc{mhp}s. Such methods however incur a change in statistical power and the full theoretical background for their results remains an open question.

\subsection{Contribution and organisation of the paper}  Given a subset $\mathcal{J} \subset [\![1,K]\!]^2$ of coordinate pairs, we construct the likelihood-ratio test for the null hypothesis
\begin{equation}
    \int_0^\infty \varphi_{kl}(s) \dif s=0, \hspace{0.1cm} \text{for every  } (k,l) \in \mathcal{J} , \label{equ:what_we_testH0}
\end{equation}

against the one-sided alternative
\begin{equation}
    \int_0^\infty \varphi_{kl}(s) \dif s>0, \hspace{0.1cm} \text{for every  } (k,l) \in \mathcal{J} . \label{equ:what_we_testH1}
\end{equation}

Our configuration differs from the usual situation where a single long time trajectory of the \textsc{mhp} is recorded. We observe instead $n$ independent realisations of $(\boldsymbol{N}_t)$ over $[0,T]$, where $T$ is fixed and $n$ large.  Such data structures are commonly found in biology  (see for example Reynaud-Bouret~\cite{Reynaud}) and they are well fitted to the financial applications we have in mind, which involve short-maturity illiquid products (as will be detailed further down this section). Following an aggregation procedure, this setting results in the large baseline regime introduced by Chen \& Hall~\cite{FengInferenceForNonStationarySEPP} in the univariate case. More precisely, we work in the parametric setting where $\mu$ and $\varphi$ depend on some $\theta \in \Theta \subset \R^p$ and we are concerned with processes which intensities take the form
\begin{equation*}
    \lambda_{k,t}^{(n)}(\theta)
    =
    n \mu_k\Big(\frac{t}{T}, \theta \Big) + \sum_{k=1}^K \sum_{t_i^l < t} \varphi_{kl}(t - t_i^l,X_i^l,\theta).
\end{equation*}

The necessary assumptions are detailed in section~\ref{section:model_assumptions} along a rigorous definition of the \textsc{mhp}.\\

Our main result is Theorem~\ref{thm:distribution_of_lambda_as_a_projection} in section~\ref{section:large_n_main_results}, stating the convergence of the likelihood ratio toward a difference of squared distances from a normal variable to two sub-spaces depending on~\eqref{equ:what_we_testH0} and~\eqref{equ:what_we_testH1}.  Going from the univariate to the multivariate incurs some specific new difficulties, as the kernels possibly being null translate into the true parameter lying at a boundary of $\Theta$. In turn, this gives rise to singularities in the statistics of interest. The asymptotic distribution of the likelihood ratio is in particular defined as a chi-bar ( $\Bar{\chi}^2$) law, as introduced by Kudo~\cite{Kudo} and Shapiro~\cite{Shapiro}. The $\Bar{\chi}^2$ arises naturally from constraints on a parameter space and is a recurrent occurrence in statistics of random processes, see for instance Andrews~\cite{AndrewsDF}~\cite{AndrewsGarch} for an application to \textsc{garch} models.  We refer also to Silvapulle \& Sen~\cite{Silvapulle} for a comprehensive review on constrained statistical inference. In our case, the $\Bar{\chi}^2$ expresses as a mixture of standard $\chi^2$ laws. Though the weights are generally intractable, we show they have a simple expression in most of the low-dimensional use cases. In particular, our procedure yields a straightforward test for the Poisson \textit{versus} self-exciting problem introduced by Dachian \& Kutoyants~\cite{Kuto} in a different setting. In the general case, the evaluation of the weights can be entirely foregone in favour of a conditional test based on Susko's~\cite{Susko} method for $\Bar{\chi}^2$-tests, with negligible power loss. As a corollary of our results and still in section~\ref{section:large_n_main_results}, we recover the consistency and asymptotic distribution of the maximum likelihood estimator (\textsc{mle}), answering a question of Chen \& Hall~\cite{FengInferenceForNonStationarySEPP} regarding the extension of their work to \textsc{mhp}s.  Hypotheses of the type~\eqref{equ:what_we_testH0} describe the most general forms for the parameter space, and are in many cases a desirable property. The non-regular distribution of the \textsc{mle} is in this sense a generic feature of the linear \textsc{mhp}. While the boundary parameter issue has been raised in other works (see Goda~\cite{Goda} for the particular example of an exponential \textsc{mhp} in a long time regime), the derivation of the entire asymptotic distribution of the \textsc{mle} and likelihood ratio is a seemingly new result.  \\

In section~\ref{section:numerical} we provide ample numerical illustrations of our procedure at work, including both simulation studies and applications.  In a first example,  we show how our test may be used to infer agents' behaviour in online auctions. Specifically, we provide evidence that interactions between agents are a significant factor in bid arrivals, therefore shifting the transaction price away from the static Vickrey equilibrium. In a second example, we study the price formation of hourly power futures on the German intraday market. We find that in addition to the reversion classically accounted for by \textsc{mhp}-based models in finance, the price mechanics display a form of inertia. We offer an explanation to our finding in terms of liquidity constraints.   Our two real-world examples pertain to illiquid assets. The configuration of sparsely populated event sequences has been the subject of a relatively limited body of work (see Salehi~\cite{Salehi}), and this type of application had consequently remained at the margin of the range of \textsc{mhp}s-based model. Our examples demonstrate the good performance of our approach in this context.\\

In section~\ref{section:preparation_for_proofs} are the proofs. Before moving to our assumptions, we provide a brief overview of our strategy. We follow a classical \textsc{m}-estimator approach. The initial steps of the proof involve showing the convergence of the log-likelihood after appropriate normalisation and centering.  This requires proving the law of large number
\begin{equation}\label{equ:pres_LLN}
    \frac{1}{n} \big\{ \lambda_t^{(n)}(\theta) - \mathbb{E}[ \lambda_t^{(n)}(\theta)] \big\}
    =
    o_{\Prob}(1)
\end{equation}

holds uniformly in $t$. Up to some technicalities, the proof of~\eqref{equ:pres_LLN} does not require specific adjustments to deal with the specificities of the multivariate case. The main complication stems instead from satisfying the conditions for Geyer's~\cite{Geyer}[Theorem 4.4] constrained \textsc{m}-estimator master theorem, which is the key to the weak convergence of the likelihood ratio. In particular, we require a stochastic equicontinuity condition for the second order remainder of the log-likelihood, as classically introduced by Pollard~\cite{Pollard}[Chapter VII, section I].  We show that verifying this assumption can be reduced to precising the rate of convergence in~\eqref{equ:pres_LLN}, in the sense that its suffices to have 
\begin{equation}\label{equ:pres_TCL}
    \frac{1}{\sqrt{n}}
    \big\{  \lambda_t^{(n)}(\theta) - \mathbb{E}[ \lambda_t^{(n)}(\theta)] \big\}
    =
    O_{\Prob}(1)
\end{equation}

uniformly in $t$ and $\theta$. The proof for~\eqref{equ:pres_TCL} is slightly more involved than that of~\eqref{equ:pres_LLN}. We proceed in the manner of Pollard~\cite{Pollard}[Chapter VII, section II] and use a chaining technique to achieve the uniformity in $\theta$. To this end, we need a sub-exponential bound on the probability that the left-hand side of~\eqref{equ:pres_TCL} deviates as $\theta$ moves. A fitting bound is achieved using a martingale concentration inequality due to van de Geer~\cite{vandeGeerconcentration}[Lemma 2.1] among others. We can then apply Geyer's~\cite{Geyer} result, yielding theorem~\ref{thm:distribution_of_lambda_as_a_projection} and the rest of our results.\\

\section{Model and assumptions}\label{section:model_assumptions}

\subsection{Setting and notation.} Consider a rich enough probability space on which, for each $k \in \segN{1,K}$ , the event times $(t^k_i)$ introduced above are defined as a sequence of increasing positive random variables and required to be distinct, meaning that jumps as different coordinates may not occur simultaneously. To each event time $t^k_i$ is associated a  mark $(X^k_i)$ with values in $\R^m$ and law $F^k(\dif x)$. The $(X_i^k)$ form an i.i.d collection of random variables.  The multivariate counting process  $(\boldsymbol{N}_t)=( N_{1,t}, \cdots, N_{K,t})$  for the $(t^k_i)$ is then defined for any $t \geq 0$ and $k \in \segN{1,K}$ as
\begin{equation*}
    N_{k,t} = \sum_{i \in \mathbb{N}} \mathbb{1}_{ \{ t^k_i \leq t \} },
\end{equation*}

and the counting measure $N_k(\dif s, \dif x)$ is the random Borel measure from $\mathcal{B}(\R^+) \otimes \mathcal{B}(\R)^{\otimes m} $ to $\R^+ $
\begin{equation*}
    N_k \colon A \times B  
    \mapsto
    \sum_{i \in \mathbb{N}} 
    \mathbb{1}_{ \{t^k_i \in A \} \cap \{  X^k_i \in B \}}.
\end{equation*}

Denote by $\mathbb{F} = (\mathcal{F}_t)$ the natural filtration of $(\boldsymbol{N}_t)$ and consider a $\mathbb{F}$-predictable coordinate-wise positive process $\lambda_t= (\lambda_{1,t}, \cdots, \lambda_{K,t})$ . From Jacod~\cite{Jacod1975MultivariatePP}, under mild technical assumptions we may choose our probability space such that $(\boldsymbol{N}_t)$ admits $ (\boldsymbol{\Lambda}_t) = ( \int_0^t \lambda_s \dif s)$ as its compensator, that is, as the unique predictable increasing process such that $( \boldsymbol{N}_t - \boldsymbol{\Lambda}_t)$ is a $\mathbb{F}$-local martingale.  We refer to the comprehensive textbook of Daley \& Vere-Jones~\cite{DVJ} for more details on processes with stochastic intensity.  This equips us with a characterisation of the (marked) \textsc{mhp}.

\begin{definition}[multivariate marked Hawkes process]\label{def:marked_hawkes_process} \phantom \\
Let $T>0$. The process $(\boldsymbol{N}_t)_{t \in [0,T]}$ is said to be a multivariate marked Hawkes process if, for any $t \in [0,T]$, its intensity takes the form
    \begin{align*}
    (\lambda_{k,t})_k
    =
    \Big(
    \mu_k \Big( \frac{t}{T} \Big)
    +
    \sum_{l =1}^K
    \int_{[0,t) \times \R^m} \varphi_{kl}(t-s,x) N_l( \dif s, \dif x) 
    \Big)_k,
    \end{align*}
    concisely written,
    \begin{align*}
    \lambda_t
    =
    \mu \Big( \frac{t}{T} \Big)
    +
    \int_{[0,t) \times \R^m} \varphi(t-s,x) \boldsymbol{N}( \dif s, \dif x),
    \end{align*}
    where $(\mu_k) \colon [0,1] \mapsto (0,\infty)^K$ is the baseline, and  $(\varphi_{kl}) \colon \R^+ \times \R^m \mapsto \mathcal{M}_K(\R^+)$ is  the kernel of the process.
\end{definition}

Now denote by $\boldsymbol{M} (\dif s, \dif x)= (M_k(\dif s, \dif x))$ the compensated measure
\begin{equation*}
    \boldsymbol{M}(\dif s, \dif x) \vcentcolon = N_k(\dif s, \dif x) - \lambda_{k,s} \dif s F_k(\dif x).
\end{equation*}

 Define also $\dif \boldsymbol{N}_s =  \boldsymbol{N}(\dif s, \R^m) $,  and $\dif \boldsymbol{M}_s= \boldsymbol{M}(\dif s, \R^m) $. Then, from Definition~\ref{def:marked_hawkes_process},
\begin{equation*}
    (\boldsymbol{M}_t) = (\int_0^t \dif \boldsymbol{M}_s),
    \hspace{0.2cm} t \in [0,T], 
\end{equation*}
    
is a local martingale. While Jacod's~\cite{Jacod1975MultivariatePP} theorem suffices to guarantee the existence of $(\boldsymbol{N}_t)$, note that one can always explicitly construct the \textsc{mhp} such that the no simultaneous jump condition we required above is satisfied. See for instance Hawkes \& Oakes~\cite{Oakes} and Delattre \textit{et al.}~\cite{DelattreHawkesLargeNetworks} for two equivalent representations, respectively as as cluster process and a Poisson \textsc{sde} solution. The jumps being distinct entails some useful simplification for the variation of the local martingale $(\boldsymbol{M}_t)$, as transcribed in remark~\ref{remark:diagonal_covariation} below.

\begin{remark}\label{remark:diagonal_covariation}
    The quadratic variation $[\boldsymbol{M},\boldsymbol{M}]$ and predictable variation $\langle \boldsymbol{M},\boldsymbol{M}\rangle$ of $(\boldsymbol{M}_t)$ are both diagonal and respectively given by $\diag( N_{k,t} )$ and $\diag( \int_0^t \lambda_k(s) \dif s)$.
\end{remark}

We need more notation. Throughout, we denote by $\lVert \cdot \lVert_p$ the $\ell^p$-norm on $\R^K$ and by $\lVert \cdot \lVert_{L^1}$ the $L^1(\R^+)$-norm  on the positive integrable functions $\lVert \phi \lVert_{L^1} = \int_0^\infty \phi(s) \dif s$. The set of squared real $K \times K$ matrices is labelled $\mathcal{M}_K(\R)$ and for any $M \in \mathcal{M}_K(\R)$, $\rho(M)$ is the spectral radius of $M$. For any open set $U \subset \R$, the set $\mathcal{C}^0(U)$ designates the continuous functions from $U$ to $\R$, and likewise $\mathcal{C}^1(U)$ designates the differentiable functions from $U$ to $\R$. Finally, for any $f=(f_{kl})  \colon  \R^m \mapsto  \mathcal{M}_K(\R^+)$, we employ the notation
\begin{align*}
   F(\dif x)=\diag(F^j(\dif x)) \hspace{0.5cm} \text{and} \hspace{0.5cm}
   \langle 
   F,f
   \rangle 
   =
   \int_{\R^m} f(\cdot,x) F(\dif x),
\end{align*}
 and write $f \in L^2(F(\dif x))$ when 
 $ \forall i,j \in \segN{1,K}^2 \int_\R f_{ij}^2(x)  F^j(\dif x) < \infty$.  

 \subsection{Estimation procedure and assumptions}\label{section:assumptions}

 As mentioned in the introduction, our setting differs from the usual observation of a single Hawkes process. We suppose instead that $n$ independent realisations $( \boldsymbol{N}^1_t )$, ..., $(\boldsymbol{N}^n_t)$ of the same Hawkes process with baseline $\mu$ and kernel $\varphi$ are recorded on a fixed time interval $[0,T]$. Now, by linearity of the intensity the sum process 
 \begin{equation}\label{equ:aggregation}
     \big( \boldsymbol{N}^{(n)}_t \big)
     \vcentcolon =
     \Big( \sum_{i=1}^n \boldsymbol{N}^i_t \Big),  \hspace{0.1cm} t \in [0,T],
 \end{equation}

 is also a Hawkes process and admits $n \mu$ and $\varphi$ as a its baseline and kernel.
\begin{remark}\label{remark:clusters_addition}
     This specificity of linear \textsc{mhp}s can be linked to the cluster representation of Hawkes \& Oakes~\cite{Oakes}. Summing up linear Hawkes processes is equivalent to aggregating  on the same timeline independent clusters from different realisations. Since cluster arrivals are Poissonian and thus additive, this leads to the baseline of the resulting process being the sum of the baseline of each realisation.
 \end{remark}
 
 We therefore recover Chen \& Hall~\cite{FengInferenceForNonStationarySEPP}'s framework of a baseline tending to infinity.  Though  their work does not appear to take advantage of the previous observation, the univariate results they introduce extend immediately to our configuration through the aggregation device ~\eqref{equ:aggregation}, and we construct our assumptions as minimal extension of their own. Following the preceding remarks, we work in a parametric setting and we consider a collection $\boldsymbol{N}^{(n)}$ of marked \textsc{mhp}s indexed by $n \in \mathbb{N}^\star$, with intensities
\begin{equation}\label{equ:parameterised_intensity}
     \lambda^{(n)}_t(\theta)
     =
     n \mu \left( \frac{t}{T}, \theta \right)
     +
     \int_0^t  \varphi(t-s,x, \theta ) 
     \boldsymbol{N}^{(n)}(\dif s, \dif x),
     \hspace{0.2cm}
     t \in [0,T].
 \end{equation}
under the probability measure $\Prob(\theta)$ where $\theta$ lies in a compact subset $\Theta \subset \R^d$ with non empty interior. To simplify the expression for our test, we assume $\theta$ separates into $\theta=(\gamma, \alpha, \beta)$ where $\mu$ depends only on $\gamma \in \R^K$ and $\varphi$ expresses as  \begin{equation}\label{equ:adjacency}
     \varphi_{kl}( \cdot, \cdot, \theta) = \alpha_{kl} \Phi_{kl}(\cdot  , \cdot,  \beta),
 \end{equation}

 where $\alpha \in \mathcal{M}_K(\R^+)$ and for $k,l$ and any value of $\beta$
\begin{equation}\label{equ:phi_is_a_density}
     \int_{[0,\infty) \times \R^m} \Phi_{kl}(s, x,\beta ) F_l(\dif x) \dif s =1. 
 \end{equation}
 
  Expression~\eqref{equ:adjacency} and~\eqref{equ:phi_is_a_density} are consistent with parameterisations found in the literature (see for instance Ogata~\cite{OgataETAS} or Hardiman~\cite{HardimanBouchaud}).  The support of the matrix $\alpha$, that is, the matrix $h = (h_{kl}) \in \mathcal{M}_K(\{0,1\})$ with $h_{kl} = 0 \iff \alpha_{kl}=0$, is  then known as the adjacency of the \textsc{mhp} (see for instance Sulem~\cite{sulem2023bayesian}). Accordingly, $\alpha$ is known as its weighted adjacency (see Bacry \textit{et al.}~\cite{bacryBompaireGaiffas}). The null hypothesis conveniently re-phrases in terms of the matrix parameter $\alpha$ as
\begin{equation}
    \label{equ:what_we_testH0a}
     \alpha_{kl}=0, \hspace{0.1cm} \text{for every} (k,l)  \hspace{0.1cm}  \in J,
\end{equation}

and the alternative as
\begin{equation}
    \label{equ:what_we_testH1a}
     \alpha_{kl}>0, \hspace{0.1cm} \text{for every} (k,l)  \hspace{0.1cm}  \in J.
\end{equation}

 We will use formulations~\eqref{equ:what_we_testH0}, ~\eqref{equ:what_we_testH1} and  \eqref{equ:what_we_testH0a},\eqref{equ:what_we_testH1a} interchangeably.  
 
 \begin{remark}
     The notion of "\textit{adjacency}" is chosen to reflect a salient property of \textsc{mhp}s in terms of graphical models. Using the definition of Granger-causality for point processes introduced by Didelez~\cite{Didelez}, Xu \textit{et al.}~\cite{Xu} show that an arrow exists from coordinate $l$ to coordinate $k$ in the causal graph of the \textsc{mhp} if and only if $\alpha_{kl} >0$.  In this regard, our test can be understood as a causality test. 
 \end{remark}
 
 Additionally to the parametric specification above, the baseline and kernel are subject to the following assumptions.
 
 \begin{assumption}\label{ass:baseline}
\leavevmode
 \begin{itemize}
    \item[(i)] For any $\theta \in \Theta$, $k \in \segN{1,K}$ and $t \in [0,1]$, $\mu_k(t,\theta)>0.$

    \item[(ii)] For any $\theta$ $\in \Theta$ and $k \in \segN{1,K}$,  $\mu_k(\cdot,\theta) \in \mathcal{C}^0([0,1])$.
\end{itemize}
 \end{assumption}

  \begin{assumption}\label{ass: kernel_separability}
\leavevmode
There exist two unique functions $f,g$ on $\R^+$ with values in $\mathcal{M}_K(\R^+)$ such that, for any $s \in \R^+$, $x \in \R^m$ and $\theta \in \Theta$,
    \begin{equation}\label{kernel_separation}
        \varphi (s,x, \theta) =   g(x,\theta) \odot f(s,\theta),
    \end{equation}
    where $\odot$ denotes the Hadamard (or coefficient-wise) product over $\mathcal{M}_K(\R^+)$.
\end{assumption}

\begin{remark}
    The uniqueness of decomposition~\eqref{kernel_separation} is not a benign assumption, it \textit{de facto} excludes non identifiable choices of the type $g (\gamma,x) \varphi(s,\alpha,\beta) = \alpha \gamma x \phi(s,\beta)$. Conversely,  the Hadamard product is only meant to reproduce commonly found parameterisations (see for instance Bacry \textit{et al.}~\cite{bacryBompaireGaiffas}). Other choices would not alter our results, including the usual matrix product.  
\end{remark}

 \begin{assumption}\label{ass:f_continuity}
 \leavevmode
\begin{itemize} 
    \item[(i)] For any $\theta \in \Theta$, $ f(\cdot,\theta) 
     \in \mathcal{C}^1(\R^+)$. 
     \item[(ii)] The function $f$ is uniformly equi-continuous in $\theta$. 
\end{itemize}
 \end{assumption}

  \begin{assumption}\label{ass:g_continuity}
   \leavevmode
\begin{itemize} 
    \item[(i)] For any $\theta \in \Theta$, $ g(\cdot,\theta) 
     \in L^2(F(\dif x))$.
     \item[(ii)] The function $g$ is uniformly equi-continuous in $\theta$. 
\end{itemize}
 \end{assumption}

  \begin{remark}
    Any kernel of the type

    \begin{equation}
        \varphi(s,x,\theta) = g(x) \odot f(s,\theta)
        \label{equ:applied_setting}
    \end{equation}

    where $g \in L^2(F(\dif x))$ and $f$ is a $\mathcal{C}^1$ function over $\R^+ \times \Theta$ will satisfy assumptions~\ref{ass:f_continuity} and~\ref{ass:g_continuity}. In particular, fixing $g \equiv 1$ retrieves Chen \& Hall's~\cite{FengInferenceForNonStationarySEPP} original setup, and our assumptions can be regarded as a natural extension of the univariate case's conditions in this respect. While theoretically restrictive, the splitting Assumption~\eqref{ass: kernel_separability} encompasses most of the existing literature regarding applications of marked Hawkes process; with use cases including seismology~\cite{OgataETAS}, information diffusion~\cite{HawkesTwitter}, epidemiology~\cite{chiang}, and price dynamics on financial markets~\cite{LeeVolatility}\cite{DeschatreGruet}~\cite{Chen23}. The choice of i.i.d marks is a more limiting Assumption. Embrecht \textit{et al.}'s model~\cite{embrechts_liniger_lin_2011}  features a split kernel but remains out of the scope of this article since its marks exhibit a dynamic distribution. 
\end{remark}

We also require some identifiability conditions on $\mu$ and $\varphi$.

\begin{assumption}
    \label{ass:identifiability}
    If $\vartheta \neq \theta$, then there exist a sub-interval $ J \subset [0,T]$ and a subset $S \subset \R^m$ of non null-measures on which, for every $(t,x) \in J \times S$, either $\mu(t,\vartheta)>\mu(t,\theta)$ or $\mu(t,\vartheta)<\mu(t,\theta)$ or $g(x,\vartheta)f(t,\vartheta)<g(x,\theta)f(t,\theta)$ or $g(x,\vartheta)f(t,\vartheta)>g(x,\theta)f(t,\theta)$.
\end{assumption}
 
As we move to the weak convergence of the \textsc{lrs}, we will be concerned with derivatives of the likelihood, requiring some higher order regularity conditions on the kernel $\varphi$.

 \begin{assumption}\label{ass:regularity_of_mu}
  \leavevmode
    \begin{itemize}
    \item[(i)] For any $t \in [0,T]$,  $\theta \mapsto \mu(t,\theta)$ is twice continuously differentiable over $\Theta$.
    \item[(ii)] For any $\theta \in \theta$, and any $p \in \{1,2\}$, $t \mapsto \partial_\theta^p \mu(t,\theta) \in  \mathcal{C}^1(\R^+)$.
\end{itemize}
 \end{assumption}

  \begin{assumption}\label{ass:regularity_of_kernel}
  \leavevmode
    \begin{itemize}
    \item[(i)] For any $t \in [0,T]$,  $\theta \mapsto f(t,\theta)$ is twice continuously differentiable over $\Theta$.
    \item[(ii)] For any $\theta \in \theta$, and any $p \in \{1,2\}$, $t \mapsto \partial_\theta^p f(t,\theta) \in  \mathcal{C}^1(\R^+)$.
\end{itemize}
 \end{assumption}

 \begin{remark}
     Take note that the differentiability of $f$ is not restricted to the interior of $\Theta$, and we therefore require its right-differentiability at boundaries where some $\alpha_{kl}$ are null. 
 \end{remark}

   \begin{assumption}\label{ass:regularity_of_weight}
  \leavevmode
    \begin{itemize}
    \item[(i)] for any $x \in \R^m$, $\theta \mapsto g(x,\theta)$ is continuously differentiable over $\Theta$.
    \item[(ii)] For any $q \in \{1,2 \}$, $\partial_\theta^q g$ is uniformly $p$-Hölder in $\theta$ for some $p \in (0,1]$:  there is some $C>0$ such that, for any $x \in \R^m$ and any $\theta, \nu \in \Theta$ 
    \begin{equation*}
        \lVert \partial_\theta^p g(x,\theta) - \partial_\theta^p g(x,\nu) \lVert_1
        \leq 
        C 
        \lVert \theta - \nu \lVert_2^p
    \end{equation*}
\end{itemize}
 \end{assumption}

\begin{remark}
    Hypothesis~\ref{ass:regularity_of_weight} drastically restricts the theoretical class of  functions for $g$, but it is inconsequential in practice. In other works, it is often the case that $g$ does not depend on $\theta$, and the conditions above are then immediately met (see Deschatre and Gruet~\cite{DeschatreGruet} and Goda~\cite{Goda} for two examples). When $g$ does depend on $\theta$, recalling the instantaneous probability interpretation of the intensity, a fitting choice is the logistic function
    \begin{equation*}
        g(x,\theta) 
        =
        \frac{1}{1 + e ^{ - \theta x}},
    \end{equation*}

    which derivative of order $p$ in $\theta$ is $\lvert x \lvert^p$-Lipschitz. Assumption~\ref{ass:regularity_of_weight} is thus easily satisfied in practice by truncating the marks, and the remark extends to most reasonable choices for $g$.
\end{remark}

The next hypothesis is standard in the study of linear Hawkes processes, and notably suffices to guarantee the non-explosion of $\boldsymbol{N}_t$ (see for instance Bacry \textit{et al.} \cite{bacrylimit}[Lemma 1]).  

 \begin{assumption}\label{ass: stability} 
 The  process is stable in that for any $\theta$ in $\Theta$\\
 \begin{equation*}
     \rho \Big( 
     \int_{(0,\infty] \times  \R^m} \varphi(s,x,\theta) \dif s F(\dif x)
     \Big)< 1.
 \end{equation*}
 where $\rho$ denotes the spectral radius.
 \end{assumption}

\begin{remark}\label{remark:introduce_the_branching_ratio}
    In Hawkes \& Oakes'~\cite{Oakes} cluster representation, the univariate Hawkes process is an immigration-birth process in which migrants arrive at rate $\mu$ and have descendants according to a Galton-Watson process with branching ratio $\int_0^\infty \varphi (s) \dif s$. Assumption~\ref{ass: stability} then has the intuitive interpretation that the ratio of non-immigrants in the accumulated population cannot exceed $1$. A popular choice for $\varphi$ is the exponential parameterisation $t \mapsto \alpha \exp(- \beta t)$. The condition then expresses straightforwardly as $\nicefrac{\alpha}{\beta}<1$, and is comparably simple to verify for most kernel choices.  
\end{remark}

In our context, Assumption~\ref{ass: stability} guarantees a semi-analytic form of the normalized mean intensity, as exposed in lemma~\ref{lemma:normalized_intensity}, which proof is straightforward and delayed to section \ref{section:preparation_for_proofs}

 \begin{lemma}\label{lemma:normalized_intensity}
Under Assumption~\ref{ass: stability}, for any $ \theta_0 \in \Theta$ and any $t \in [0,T]$ 
    \begin{equation*}
    \frac{1}{n} \mathbb{E}^{\theta_0} \big[ \lambda^{(n)}(t,\theta_0) \big]
    =
    h(t, \theta_0)
    :=
    \mu\Big(
        \frac{t}{T}
        ,
        \theta_0
    \Big)
    + 
    \int_0^t 
        \langle F, \Psi \rangle (t-s,\theta_0) \mu(s,\theta_0) 
    \dif s,
\end{equation*}

where $\mathbb{E}^{\theta}$ is the expectation under $\Prob(\theta)$ and $
    \langle F,  \Psi \rangle
    =
    \sum_{k \geq 1}
    \langle F , \varphi \rangle ^{\star k}
$,
with $\star k$ the $k$-fold convolution.
\end{lemma}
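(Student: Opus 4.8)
The plan is to reduce the statement to a linear Volterra integral equation for the mean intensity and then to invert that equation through a Neumann (resolvent) series. First I would set $m^{(n)}(t) := \mathbb{E}^{\theta_0}[\lambda^{(n)}_t(\theta_0)]$ and take expectations in the defining relation~\eqref{equ:parameterised_intensity}. Since $\boldsymbol{M}(\dif s,\dif x) = \boldsymbol{N}(\dif s,\dif x) - \lambda_s\,\dif s\,F(\dif x)$ is a local martingale (Definition~\ref{def:marked_hawkes_process}), a localisation argument --- with Assumption~\ref{ass: stability} ensuring non-explosion and hence the $[0,T]$-integrability of $\lambda^{(n)}$ --- shows that $\mathbb{E}^{\theta_0}[\boldsymbol{N}^{(n)}(\dif s,\dif x)] = m^{(n)}(s)\,\dif s\,F(\dif x)$. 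Substituting this and applying Fubini yields, in matrix--vector form,
\[
 m^{(n)}(t) = n\,\mu\Big(\tfrac{t}{T},\theta_0\Big) + \int_0^t \langle F,\varphi\rangle(t-s,\theta_0)\,m^{(n)}(s)\,\dif s .
\]

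By linearity the rescaled mean $\bar m := m^{(n)}/n$ solves the $n$-free equation $\bar m = \mu(\cdot/T,\theta_0) + \langle F,\varphi\rangle \star \bar m$, which already explains the $1/n$ normalisation. I would solve it by Picard iteration: starting from $\bar m_0 = \mu(\cdot/T,\theta_0)$ and iterating $\bar m_{j+1} = \mu(\cdot/T,\theta_0) + \langle F,\varphi\rangle \star \bar m_j$, a short induction gives $\bar m_j = \sum_{k=0}^{j}\langle F,\varphi\rangle^{\star k}\star \mu(\cdot/T,\theta_0)$, with $\langle F,\varphi\rangle^{\star 0}$ understood as the identity. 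Letting $j \to \infty$ and recognising the $k\geq 1$ terms as $\langle F,\Psi\rangle\star\mu$ produces exactly the stated expression for $h$.

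The step needing genuine care --- and the only place Assumption~\ref{ass: stability} enters --- is the convergence of the series and the passage to the limit. Writing $A := \int_{(0,\infty)\times\R^m}\varphi(s,x,\theta_0)\,\dif s\,F(\dif x)$ for the entrywise nonnegative matrix of total masses, the total mass of a convolution factorises as the matrix product of the total masses, so that $\int_0^\infty \langle F,\varphi\rangle^{\star k}(u)\,\dif u = A^k$ as matrices. Because $\rho(A)<1$, the matrix series $\sum_{k\geq 0}A^k = (I-A)^{-1}$ converges geometrically; since every term is nonnegative this simultaneously gives the $L^1$-convergence of $\sum_k \langle F,\varphi\rangle^{\star k}$, the integrability of the resolvent kernel $\langle F,\Psi\rangle$, and --- by monotone/dominated convergence --- the justification for exchanging sum and integral in the Picard limit. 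Uniqueness follows from a Gronwall estimate on the difference of two candidate solutions, and the continuity of $\mu$ (Assumption~\ref{ass:baseline}) makes all convolutions well defined pointwise. I expect the martingale/localisation step killing the compensated term to be the main (if mild) obstacle; everything after it is the standard resolvent computation for linear Hawkes intensities.
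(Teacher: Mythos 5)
Your proof is correct and follows essentially the same route as the paper: take expectations to kill the compensated martingale term, obtain the Volterra equation $\bar m = \mu + \langle F,\varphi\rangle \star \bar m$ for the normalised mean intensity, and invert it via the resolvent $\langle F,\Psi\rangle = \sum_{k\geq 1}\langle F,\varphi\rangle^{\star k}$, whose $L^1$-convergence follows from $\rho(A)<1$. The only difference is cosmetic: the paper delegates the Picard/Neumann-series step and uniqueness to its Lemma~\ref{lemma:on_volterra} (citing Bacry \emph{et al.} and Gripenberg), whereas you spell that argument out.
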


Throughout, we denote by $H(\cdot,\cdot)$ the function
\begin{equation}\label{equ:H_function}
    H \colon t,\theta \mapsto \int_0^t h(s, \theta) \dif s.
\end{equation} 

The definition of $h$ being independent from the value of $n$ allows to express the asymptotic information $I(\theta)$ of the model in a convenient form, as will be made clear in section \ref{section:large_n_main_results}. In the following, for any vector $u = (u_i)$ in some $\R^d$, we use the tensor involution notation $u^{\otimes 2}$ to denote the $ d \times d$ matrix $u^{\otimes 2} = ( u_i u_j) = u u^\textsc{t}$.

\begin{assumption}(Non degeneracy of the observed information)\label{ass: Fisher_information_is_positive}
    \begin{itemize}
     \item[] The matrix valued function
     \begin{equation}\label{equ:Fisher_information}
         I(\theta)
         =
         \int_0^T
         \sum_{k=1}^K
             \frac
            {
            \{
            \partial_\theta \mu_k(s,\theta) 
            +
            \int_0^s 
            \sum_{l=1}^K
            \partial_\theta 
                \langle F,\varphi\rangle_{kl}(s-u,\theta) h_l(u,\theta) \dif u
            \}^{\otimes 2}
            }
            {
            \mu(s,\theta) 
            +
            \int_0^s 
            \sum_{l=1}^K 
                \langle F,\varphi\rangle_{kl}(s-u,\theta) h_l(u,\theta) \dif u
            }
         \dif s 
     \end{equation}
      takes non singular values, where $h(t,\theta)=(h_k(t,\theta))$ is defined in Lemma~\ref{lemma:normalized_intensity}.
     \label{hyp:non_singular}
 \end{itemize}
\end{assumption}

\begin{remark}\label{remark:indentifiability}
    Verifying assumption~\ref{ass: Fisher_information_is_positive} relates to the parameter identifiability problem, which is a non trivial question for \textsc{mhp}s. consider for instance the exponential kernel
    \begin{equation*}
        \varphi_{kl} \colon t \mapsto \alpha_{kl} \exp( -\beta_{kl} t).
    \end{equation*}
    
    for $(k,l) \in \segN{1,K}^2$. It suffices that $\alpha_{kl}=0$ in the true parameter to prevent $\beta_{kl}$ from being identified. Under the non degeneracy assumption~\ref{ass: Fisher_information_is_positive}, the size and structure of the decay parameter are hence constrained by those of the weighted adjacency.  We refer to Bonnet \textit{et al.}~\cite{Martinez} for a simple identifiability criterion in the context of exponential \textsc{mhp}s.  In general, we provide a consistent estimator for $I(\theta)$ in Lemma~\ref{lemma:convergence_of_likelihood_derivatives} on which assumption~\ref{ass: Fisher_information_is_positive} can be numerically verified.
\end{remark}

 \subsection{Likelihood-based inference}

The log-likelihood of $\theta \in \Theta$ for the (marked) \textsc{mhp} is given by 
 \begin{equation}\label{equ:log_lkl_def}
    \LogLik{n,\theta} 
    =
    \sum_{k=1}^K 
    \int_0^T
        \ln \lambda^{(n)}_{k,s} (\theta)
    \dif N^{(n)}_{k,s}
    -
    \int_0^T
        \lambda^{(n)}_{k,s}(\theta)
    \dif s,
\end{equation}

where $( \boldsymbol{N}^{(n)}_t)$ is the process defined by~\eqref{equ:aggregation}. We refer to Daley \& Vere-Jones~\cite{DVJ}[Proposition 7.2.3 p. 232] for a proof of~\eqref{equ:log_lkl_def}. The maximum likelihood estimator $\hat{\theta}_{n}$ (\textsc{mle}) for $\theta$ is then defined as the global argmax of the likelihood $\mathcal{L}(n,\cdot)$ over $\Theta$. For any sub-model $\Theta_0$, we define similarly $\hat{\theta}_n^0$ as the global argmax of $\mathcal{L}(n,\cdot)$ over $\Theta_0$.  Note that $\hat{\theta}_n^0$ writes as a random vector of $\R^d$ with $p$ null coordinates. Our core problem is to derive the asymptotic distribution of the (log) likelihood ratio statistic (\textsc{lrs})
\begin{align*}
    \Lambda_n 
    =
    2 \Big(
    \sup_{\theta \in \Theta} \mathcal{L}(n, \theta)
    -
    \sup_{\theta \in \Theta_0} \mathcal{L}(n, \theta)
    \Big)
    =
    2 \big( \mathcal{L}(n,\hat{\theta}_n) - \mathcal{L}(n,\hat{\theta}^0_n) \big) 
\end{align*}

for sub-models of the type 
\begin{equation}\label{equ:sub_model}
    \Theta_0 = \big\{ \theta=(\gamma, \alpha, \beta)
    \in \Theta \lvert \hspace{0.2cm} 
    \alpha_{kl}=0; \hspace{0.2cm} (k,l) \in \mathcal{J} \big\} 
\end{equation}

corresponding to the set of hypotheses~\eqref{equ:what_we_testH0a} and~\eqref{equ:what_we_testH1a}.  As is usual for likelihood ratio tests the supremum over the whole parameter space is used as a proxy for the alternative. The distribution of the \textsc{mle} is of course also of great interest and we cover the problem as a by-product of our general results on $\Lambda_n$. Finally, we stress that the class of test we introduce is quite flexible, and $\Theta_0$ may be chosen to match a larger set of problems.

\section{Main results}\label{section:large_n_main_results}

\subsection{Consistency of the \textsc{mle}}
Before getting to the likelihood ratio, a preliminary consistency result for the \textsc{mle} is stated.

\begin{proposition}[]\label{prop:large_sample_consitency}
    Under Assumptions~\ref{ass:baseline} to~\ref{ass:identifiability} and Assumption~\ref{ass: stability} the \textsc{mle} $\hat{\theta}$ is consistent: for any $\theta_0 \in \Theta$,
    \begin{align}
        \hat{\theta}_n 
        & \xrightarrow[n \to \infty]{\Prob(\theta_0)}
        \theta_0.
        \label{equ:MLE_is_consistent}
    \end{align}
\end{proposition}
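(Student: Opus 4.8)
The plan is to treat $\hat\theta_n$ as a classical M-estimator and run Wald's consistency argument: establish uniform convergence of a suitably normalized log-likelihood to a deterministic contrast, show this contrast has a well-separated maximum at $\theta_0$, and conclude via the standard argmax theorem. Since $\lambda^{(n)}_{k,s}(\theta)$ grows like $n$, I first renormalize. Writing $\ln\lambda^{(n)}_{k,s}(\theta)=\ln n+\ln\big(\tfrac1n\lambda^{(n)}_{k,s}(\theta)\big)$, the contribution $\tfrac{\ln n}{n}N^{(n)}_{k,T}$ produced by the $\ln n$ term does not depend on $\theta$ and hence leaves the argmax unchanged; discarding it, it suffices to study
\[
  M_n(\theta):=\frac1n\sum_{k=1}^K\Big(\int_0^T\ln\big(\tfrac1n\lambda^{(n)}_{k,s}(\theta)\big)\,dN^{(n)}_{k,s}-\int_0^T\lambda^{(n)}_{k,s}(\theta)\,ds\Big),
\]
and to prove $\argmax_{\theta\in\Theta}M_n\to\theta_0$ in $\Prob(\theta_0)$-probability.

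The candidate intensity is driven by data generated under $\theta_0$, so by the law of large numbers~\eqref{equ:pres_LLN} and Lemma~\ref{lemma:normalized_intensity},
\[
  \frac1n\lambda^{(n)}_{k,s}(\theta)\xrightarrow[n\to\infty]{\Prob(\theta_0)}\bar\lambda_k(s,\theta):=\mu_k\big(\tfrac sT,\theta\big)+\int_0^s\sum_{l=1}^K\langle F,\varphi\rangle_{kl}(s-u,\theta)\,h_l(u,\theta_0)\,du
\]
uniformly in $s\in[0,T]$, with $\bar\lambda_k(\cdot,\theta_0)=h_k(\cdot,\theta_0)$. For the stochastic integral I would use the decomposition $dN^{(n)}_{k,s}=\lambda^{(n)}_{k,s}(\theta_0)\,ds+dM^{(n)}_{k,s}$: the compensator part converges to $\int_0^T\ln\bar\lambda_k(s,\theta)\,h_k(s,\theta_0)\,ds$, while the martingale part vanishes after normalization since, by Remark~\ref{remark:diagonal_covariation}, its predictable variation is $O(n)$. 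Assumption~\ref{ass:baseline}(i) keeps $\bar\lambda_k$ and $h_k$ bounded away from $0$, and Assumption~\ref{ass: stability} keeps $h$ finite, so the logarithm is harmless. This yields the pointwise limit
\[
  M(\theta)=\sum_{k=1}^K\int_0^T\big(h_k(s,\theta_0)\ln\bar\lambda_k(s,\theta)-\bar\lambda_k(s,\theta)\big)\,ds,
\]
which I would upgrade to uniform convergence over the compact $\Theta$ by controlling the $\theta$-oscillation of $M_n$ through the equicontinuity of $f$ and $g$ (Assumptions~\ref{ass:f_continuity} and~\ref{ass:g_continuity}), precisely the input needed to render~\eqref{equ:pres_LLN} uniform in $\theta$.

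For identification, a direct computation gives
\[
  M(\theta_0)-M(\theta)=\sum_{k=1}^K\int_0^T\Big(h_k(s,\theta_0)\ln\frac{h_k(s,\theta_0)}{\bar\lambda_k(s,\theta)}-h_k(s,\theta_0)+\bar\lambda_k(s,\theta)\Big)\,ds,
\]
whose integrand is the Poisson divergence $a\ln(a/b)-a+b\ge0$, vanishing iff $a=b$. Hence $M(\theta_0)-M(\theta)\ge0$ with equality iff $\bar\lambda(\cdot,\theta)\equiv\bar\lambda(\cdot,\theta_0)$ on $[0,T]$, and it remains to deduce $\theta=\theta_0$, which is where Assumption~\ref{ass:identifiability} enters. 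Combining the uniqueness of the maximizer with continuity of $M$ on the compact $\Theta$ gives a well-separated maximum, and together with the uniform convergence of $M_n$ this delivers~\eqref{equ:MLE_is_consistent}.

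I expect the last implication to be the main obstacle, because the limiting contrast feels the kernel only through its mark-average $\langle F,\varphi\rangle$ convolved against $h(\cdot,\theta_0)$, not through $\varphi$ pointwise. From $\bar\lambda(\cdot,\theta)=\bar\lambda(\cdot,\theta_0)$ one extracts the integral identity $\delta\mu(s)+\int_0^s\langle F,\delta\varphi\rangle(s-u)\,h(u,\theta_0)\,du=0$ for the parameter differences $\delta\mu,\delta\varphi$; disentangling the baseline and kernel contributions (for instance through the behaviour as $s\to0^+$ and the renewal structure $h=\mu+\langle F,\Psi\rangle\star\mu$) and then invoking the sign-definite separation of Assumption~\ref{ass:identifiability} to exclude cancellation is the delicate step. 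A secondary difficulty is ensuring the uniform law of large numbers holds up to the boundary of $\Theta$ where some $\alpha_{kl}$ vanish, which is exactly why the regularity and equicontinuity hypotheses on $f$ and $g$ are imposed on all of $\Theta$ rather than only its interior.
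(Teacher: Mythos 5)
Your proposal is correct and follows essentially the same route as the paper: the paper centers the normalized log-likelihood at $\theta_0$ (forming the local likelihood ratio $\tfrac1n\Lambda_n(\theta_0,\theta)$, which cancels the $\ln n$ contribution you discard explicitly), proves uniform convergence to the deterministic contrast $\bar\Lambda$ via the same martingale/compensator splitting and equicontinuity-plus-compactness argument, identifies the separated maximum at $\theta_0$ through the same Poisson-divergence inequality combined with Assumptions~\ref{ass:identifiability} and~\ref{ass: stability}, and concludes with the M-estimator master theorem. The step you flag as delicate --- passing from $\bar\lambda(\cdot,\theta)\equiv\bar\lambda(\cdot,\theta_0)$ to $\theta=\theta_0$ --- is treated no more explicitly in the paper, which simply asserts it holds under those assumptions.
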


Observe that we have not introduced any restriction locating $\theta_0$ in the interior $\mathring{\Theta}$ of its parameter space $\Theta$. When the true parameter belongs to $\mathring{\Theta}$, the consistency of the \textsc{mle} guarantees it eventually positions itself away from the border with probability tending to $1$. We may then expect the same asymptotic behaviour as in the univariate case, namely, the asymptotic normality of the \textsc{mle}. In the general case where this restriction is lifted, the \textsc{mle} may find itself pinned along the boundary with strictly positive probability, giving rise to singularities in the distribution of the \textsc{lrs} $\Lambda_n$. Note in particular that in the univariate case, the main result of Chen \& Hall~\cite{FengInferenceForNonStationarySEPP} is thus non applicable to our purpose of testing for self-excitation as it breaks down under the Poisson null hypothesis $\varphi=0$. The general statement for the convergence of the \textsc{lrs} however remains similar to the standard case, as still expresses in terms of quadratic forms in a Gaussian variable, see Van den Vaart~\cite{VanDenVaartAsymptoticStatistics}[section 16.3].

\subsection{Asymptotic of the likelihood ratio test}Let $\Theta_0$ be the sub-model~\eqref{equ:sub_model} associated to a null hypothesis of the type~\eqref{equ:what_we_testH0}. Without loss of generality we may assume that the null adjacencies correspond to the first $p$ parameter coordinates $\theta_1, \cdots \theta_p$. Then, $\sqrt{n} ( \Theta_0 - \{ \theta_0 \})$ converges \footnote{In the sense of Painlevé-Kuratowski, see the introduction of Geyer~\cite{Geyer}.} to $H_0 = \{0 \}^p \times \R^{d-p}$ and $\sqrt{n} ( \Theta - \{ \theta_0 \})$ to $H=(0, \infty)^p \times \R^{d-p}$. The asymptotic distribution of the \textsc{lrs} $\Lambda_n$ is then given as the difference between the squared distances of a certain Gaussian variable to $H_0$ and $H$.

\begin{theorem}\label{thm:distribution_of_lambda_as_a_projection}
 Under Assumptions~\ref{ass:baseline} to~\ref{ass: Fisher_information_is_positive}, for any $\theta_0 \in \Theta_0$

 \begin{equation}\label{equ:chibar_as_a_projection}
     \Lambda_n
     \xrightarrow[n \to \infty]{\mathcal{L}(\Prob(\theta_0))}
      \lVert I(\theta_0)^{\nicefrac{1}{2}} X - I(\theta_0)^{\nicefrac{1}{2}} H_0 \lVert^2_2
      -
      \lVert I(\theta_0)^{\nicefrac{1}{2}} X - I(\theta_0)^{\nicefrac{1}{2}} H\lVert^2_2
 \end{equation}
    where $X \sim \mathcal{N}(0,I(\theta_0)^{-1})$
\end{theorem}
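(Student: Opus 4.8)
The plan is to cast the convergence of $\Lambda_n$ as a direct application of Geyer's~\cite{Geyer}[Theorem 4.4] master theorem for constrained $\textsc{m}$-estimators, using the geometry of the tangent cones $H_0$ and $H$ announced just before the statement. The log-likelihood $\mathcal{L}(n,\cdot)$ is an $\textsc{m}$-objective, and the whole argument reduces to checking the three ingredients that Geyer's theorem requires: (i) a quadratic expansion of $\mathcal{L}(n,\theta_0 + h/\sqrt{n})$ around $\theta_0$ with a well-behaved linear term and a deterministic Hessian; (ii) a stochastic equicontinuity condition controlling the second-order remainder uniformly over a neighbourhood; and (iii) the Painlevé--Kuratowski convergence of the rescaled parameter sets, which is exactly the convergence of $\sqrt{n}(\Theta-\{\theta_0\})$ to $H$ and of $\sqrt{n}(\Theta_0-\{\theta_0\})$ to $H_0$ stated above.

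First I would establish the local quadratic approximation. Writing $h=\sqrt{n}(\theta-\theta_0)$, I would Taylor-expand the centered and normalized log-likelihood and show
\begin{equation*}
    \mathcal{L}(n,\theta_0 + h/\sqrt{n}) - \mathcal{L}(n,\theta_0)
    =
    h^\trans S_n - \tfrac12 h^\trans I(\theta_0) h + R_n(h),
\end{equation*}
where $S_n = n^{-1/2}\,\partial_\theta \mathcal{L}(n,\theta_0)$ is the normalized score and $R_n(h)$ is the second-order remainder. The score is a terminal martingale increment, so a martingale central limit theorem (using the predictable variation computed via Remark~\ref{remark:diagonal_covariation} and the semi-analytic mean intensity $h(t,\theta_0)$ from Lemma~\ref{lemma:normalized_intensity}) yields $S_n \xrightarrow{\mathcal{L}} Z \sim \mathcal{N}(0,I(\theta_0))$, with the asymptotic variance matching $I(\theta_0)$ by construction of~\eqref{equ:Fisher_information}. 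The convergence of the empirical Hessian to the deterministic $I(\theta_0)$ follows from the law of large numbers~\eqref{equ:pres_LLN}, whose uniform-in-$t$ version is assumed available from the earlier analysis, together with the regularity Assumptions~\ref{ass:regularity_of_mu} to~\ref{ass:regularity_of_weight} legitimating differentiation under the integral.

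The main obstacle, and the technical heart of the proof, is controlling the remainder $R_n(h)$ uniformly on compacts of $h$, i.e. establishing the stochastic equicontinuity condition (ii). As flagged in the introduction, this reduces to the tightness estimate~\eqref{equ:pres_TCL}, namely $n^{-1/2}\{\lambda^{(n)}_t(\theta)-\mathbb{E}[\lambda^{(n)}_t(\theta)]\}=O_{\Prob}(1)$ uniformly in $t$ and $\theta$, which is obtained by a chaining argument à la Pollard~\cite{Pollard} fed by the sub-exponential martingale concentration inequality of van de Geer~\cite{vandeGeerconcentration}. The delicate point specific to our setting is that $\theta_0$ may sit on the boundary of $\Theta$ where some $\alpha_{kl}=0$, so the expansion must use one-sided (right) derivatives of $f$ — legitimate by the remark following Assumption~\ref{ass:regularity_of_kernel} — and the remainder control must hold on the cone side of $\theta_0$.

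Finally I would invoke Geyer's theorem to conclude. With the quadratic expansion in force and $I(\theta_0)$ nonsingular by Assumption~\ref{ass: Fisher_information_is_positive}, the theorem gives that $\hat h_n = \sqrt{n}(\hat\theta_n-\theta_0)$ and $\hat h_n^0 = \sqrt{n}(\hat\theta_n^0-\theta_0)$ converge to the projections (in the $I(\theta_0)$-inner product) of the Gaussian argmax onto the limiting cones $H$ and $H_0$ respectively. Concretely, setting $X = I(\theta_0)^{-1}Z \sim \mathcal{N}(0,I(\theta_0)^{-1})$, the limiting objective $h\mapsto -\tfrac12\lVert I(\theta_0)^{1/2}(X-h)\lVert_2^2$ is maximized over each cone at the metric projection, whence
\begin{equation*}
    \Lambda_n
    =
    2\big(\mathcal{L}(n,\hat\theta_n)-\mathcal{L}(n,\hat\theta_n^0)\big)
    \xrightarrow{\mathcal{L}}
    \lVert I(\theta_0)^{1/2}X - I(\theta_0)^{1/2}H_0\lVert_2^2
    -
    \lVert I(\theta_0)^{1/2}X - I(\theta_0)^{1/2}H\lVert_2^2,
\end{equation*}
which is precisely~\eqref{equ:chibar_as_a_projection}; the continuous mapping theorem transfers the weak convergence of $(S_n,\text{Hessian})$ through the argmax/projection functionals, the latter being continuous since the cones are closed convex and $I(\theta_0)$ is positive definite.
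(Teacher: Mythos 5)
Your proposal follows essentially the same route as the paper: verifying the hypotheses of Geyer's constrained \textsc{m}-estimator theorem (the quadratic expansion with asymptotic normality of the score obtained from a martingale central limit theorem, stochastic equicontinuity of the second-order remainder via chaining fed by van de Geer's concentration inequality, and the Painlevé--Kuratowski convergence of the rescaled parameter sets to $H$ and $H_0$), then reading off the limit of $\Lambda_n$ as the difference of squared $I(\theta_0)^{\nicefrac{1}{2}}$-distances to the two cones. The only ingredient you leave implicit is Geyer's consistency requirement for both the unconstrained and constrained maximizers, which the paper supplies through Proposition~\ref{prop:large_sample_consitency}; otherwise the argument is the same.
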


In the standard case, $H$ contains the whole parameter space and the second term vanishes. Here the term subsists as the singularities in the distribution of the \textsc{mle} echo into that of the \textsc{lrs}. This gives rise to a $\bar{\chi}^2$ distribution.

\begin{proposition}\label{proposition:the_chi_bar_distribution}
     Under Assumptions~\ref{ass:baseline} to~\ref{ass: Fisher_information_is_positive}, the likelihood ratio for the set of hypotheses~\eqref{equ:what_we_testH0a} and~\eqref{equ:what_we_testH1a} is asymptotically distributed under any $\Prob(\theta_0)$ with $\theta_0 \in \Theta_0$ as a chi-bar law:
     \begin{equation}
              \Lambda_n
     \xrightarrow[n \to \infty]{\mathcal{L}(\Prob(\theta_0))}
      \Bar{\chi}^2(p)
     \end{equation}
\end{proposition}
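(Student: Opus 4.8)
The plan is to derive the $\bar\chi^2$ distribution directly from the characterization of the limit given in Theorem~\ref{thm:distribution_of_lambda_as_a_projection}. The right-hand side of~\eqref{equ:chibar_as_a_projection} is a difference of squared distances from the Gaussian point $I(\theta_0)^{\nicefrac{1}{2}}X$ to the images of the two cones $H_0 = \{0\}^p \times \R^{d-p}$ and $H = (0,\infty)^p \times \R^{d-p}$ under the linear map $I(\theta_0)^{\nicefrac{1}{2}}$. First I would absorb the information matrix into a change of variable: set $Z := I(\theta_0)^{\nicefrac{1}{2}} X$, which under the stated covariance $X \sim \mathcal{N}(0, I(\theta_0)^{-1})$ makes $Z$ a \emph{standard} Gaussian vector, $Z \sim \mathcal{N}(0, \mathrm{Id}_d)$. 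Writing $\tilde H_0 = I(\theta_0)^{\nicefrac{1}{2}} H_0$ and $\tilde H = I(\theta_0)^{\nicefrac{1}{2}} H$, the limit becomes $\mathrm{dist}(Z,\tilde H_0)^2 - \mathrm{dist}(Z,\tilde H)^2$, which is exactly the canonical form of the $\bar\chi^2$ statistic for testing a linear subspace against a polyhedral cone, as studied by Kudo~\cite{Kudo}, Shapiro~\cite{Shapiro}, and Silvapulle \& Sen~\cite{Silvapulle}.

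The core of the argument is then the classical geometric decomposition for projections onto closed convex cones in a Euclidean space with a suitable inner product. The key observation is that $\tilde H$ is a closed convex cone containing the linear subspace $\tilde H_0$ as a face, and $\tilde H_0$ is spanned by the last $d-p$ coordinate directions while the cone-generating (inequality-constrained) directions are the first $p$. I would invoke the standard projection identity: for the projection $\Pi_{\tilde H}(Z)$ of $Z$ onto the cone, the difference $\mathrm{dist}(Z,\tilde H_0)^2 - \mathrm{dist}(Z,\tilde H)^2$ equals $\lVert \Pi_{\tilde H}(Z) - \Pi_{\tilde H_0}(Z)\rVert_2^2$, using orthogonality of the residuals. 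Since $\tilde H_0$ is a linear subspace, one may project $Z$ onto $\tilde H_0^\perp$ first, reducing everything to a $p$-dimensional problem involving the Gaussian projection onto the simplicial cone $(0,\infty)^p$ (after the whitening, this is the nonnegative orthant), whose squared-distance difference is the defining quantity of $\bar\chi^2(p)$.

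The remaining step is to identify the distribution of this $p$-dimensional quantity as the chi-bar-squared law with the appropriate weights. By Kudo's~\cite{Kudo} decomposition of the nonnegative orthant into its faces, conditioning on which face the projection of the reduced standard Gaussian vector lands in partitions the probability space; on the face of dimension $j$, the difference of squared distances is distributed as $\chi^2(j)$, and the associated weight $w_j$ is the probability that $\Pi_{\tilde H}(Z)$ lands in the relative interior of a $j$-dimensional face. Summing over faces gives precisely $\bar\chi^2(p) = \sum_{j=0}^p w_j\, \chi^2(j)$. I would state the weights in this generic mixture form, deferring their explicit evaluation (which the introduction already flags as generally intractable) to the low-dimensional cases and to Susko's~\cite{Susko} conditional procedure discussed elsewhere in the paper.

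The main obstacle I anticipate is \emph{bookkeeping the geometry under the non-identity metric}: the cones $H_0$ and $H$ are defined by coordinate constraints, but the relevant inner product is the one induced by $I(\theta_0)$, so the orthogonal complements and face decompositions must be taken in the $I(\theta_0)$-metric rather than the Euclidean one. The whitening transformation $Z = I(\theta_0)^{\nicefrac{1}{2}}X$ is precisely what resolves this, but care is needed to verify that $I(\theta_0)^{\nicefrac{1}{2}}$ maps the polyhedral structure of $H$ to a (generally non-orthogonal) simplicial cone while preserving the containment $\tilde H_0 \subset \tilde H$ and the cone property. Once the whitened picture is set up correctly, the weights $w_j$ inherit their dependence on $I(\theta_0)$ through the angles between the transformed constraint directions, which is exactly why they are intractable in general but explicit when $p$ is small.
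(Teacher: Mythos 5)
Your argument is correct and reaches the same mixture representation as the paper, but it travels a different technical road. The paper's own proof (Proposition~\ref{Prop:chi_bar_mixture} in Annex~\ref{annex:chi_bar_def}) stays in the original coordinates with the quadratic form $q^{\mathcal{A}}_X$, characterises the two constrained minimisers through explicit Karush--Kuhn--Tucker multipliers, identifies the event $\{\tilde z\in C^{I_k}\}$ with sign conditions on two Gaussian vectors $Y^{(k)},Z^{(k)}$, and proves their independence by hand via Schur-complement computations; the weights then come out as products of two orthant probabilities. You instead whiten first ($Z=I(\theta_0)^{\nicefrac{1}{2}}X\sim\mathcal{N}(0,\mathrm{Id})$), use the Moreau/Pythagoras identities $\mathrm{dist}(Z,L)^2=\lVert Z\rVert^2-\lVert\Pi_L Z\rVert^2$ and $\Pi_L\Pi_K=\Pi_L$ for a linear subspace $L$ inside a closed convex cone $K$ (which does hold here, since $\langle Z-\Pi_K Z, y\rangle=0$ for all $y$ in the lineality space), split the cone as $\tilde H=\tilde H_0\oplus(\tilde H\cap\tilde H_0^{\perp})$ to reduce to a $p$-dimensional simplicial cone, and then invoke the Kudo--Shapiro face decomposition. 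Your route is more geometric and arguably cleaner — the independence that the paper extracts from Schur formulas is absorbed into the orthogonality of the face span and its normal directions — at the price of outsourcing the conditional-$\chi^2(j)$ step to \cite{Kudo} and \cite{Shapiro} rather than reproving it, which the paper does in the appendix precisely to remain self-contained. The KKT route, conversely, hands you the weights in the explicit product form the paper needs for the low-dimensional examples and for Susko's conditional test.

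One slip to fix before this could replace the paper's argument: the parenthetical claim that after whitening the reduced $p$-dimensional cone ``is the nonnegative orthant'' is wrong. The image $I(\theta_0)^{\nicefrac{1}{2}}H$ (and its trace on $\tilde H_0^{\perp}$) is a simplicial cone whose generators are generally not orthogonal; were it the standard orthant, the weights would be the binomial probabilities $\binom{p}{j}2^{-p}$ independently of $I(\theta_0)$, contradicting both the paper and your own closing paragraph. Your final paragraph gets the geometry right — the weights are angle-dependent Gaussian face probabilities — so this is an internal inconsistency of exposition rather than a gap in the mathematics, but the reduced problem must be stated as projection onto a skewed simplicial cone throughout. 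You should also note in passing that $H=(0,\infty)^p\times\R^{d-p}$ must be replaced by its closure before projecting, which changes nothing since the infimum of the distance is attained on the closure.
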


The chi-bar distribution can be conceived as the one of a $\chi^2$ law with random degrees of freedom depending on the number of zeros in the projection on $H$ of its underlying Gaussian variable $X$ in Theorem~\ref{thm:distribution_of_lambda_as_a_projection}. The $\Bar{\chi}^2$ thus expresses as a mixture of standard $\chi^2$
\begin{equation*}
    \Bar{\chi}^2(p) = \sum_{k=0}^p \omega_k \chi^2(k).
\end{equation*}

We refer to Annex~\ref{annex:chi_bar_def} for a rigorous derivation of the existence and expressions of the weights in our case, and to the works of Shapiro~\cite{Shapiro}, Kudo~\cite{Kudo}, Self \& Liang~\cite{Self-Liang}, and Susko~\cite{Susko} for further details on the links between the $\Bar{\chi}^2$ and the standard $\chi^2$ distributions. The weights $(\omega_k)$ have the unfortunate property of generally depending non-explicitly on the value of $\theta_0$. This problematic dependency can however easily be circumvented. In some specific albeit useful configurations -- including the test of an univariate Poisson against the self exciting alternative -- the weights are in fact known and independent from the true parameter. In the general case, Susko~\cite{Susko} suggests exploiting the relation between the effective degree of the chi-bar and its underlying normal variable $X$ by approximating the conditional distribution of $\Lambda_n$ by a $\chi^2(p-k)$ law when $k$ zeros are estimated in the \textsc{mle} $\hat{\theta}_n$. This yields the \textit{conditional} chi-bar test. The p-values for our test are therefore given by the following rule:

\begin{itemize}
    \item When the $(\omega_k)_{k \in \segN{1,p}}$ are known, by Proposition~\ref{proposition:the_chi_bar_distribution}, for any $\theta_0 \in \Theta_0$
    \begin{equation*}
        \Prob_{\theta_0}( \Lambda_n \leq z^\omega_a )
        \xrightarrow[n \to \infty]{}
        a,
    \end{equation*}
    where $z^\omega_a$ is the quantile at level $a$ for the $\sum_k \omega_k \chi^2(k)$ law .\\
    \item When the $(\omega_k)_{k \in \segN{1,p}}$ are unknown, by Theorem~\ref{thm:distribution_of_lambda_as_a_projection} and Susko's Theorem~\cite{Susko}, denoting $\hat{p}$ the estimated number of zeros in $\hat{\theta}_n$, for any $k \in \segN{1,p}$,

    \begin{equation*}
        \Prob_{\theta_0}( \Lambda_n \leq z^{p-k}_a 
        \hspace{0.1cm}
        \lvert
        \hspace{0.1cm}
        \hat{p}=k )
        \xrightarrow[n \to \infty]{}
        a,
    \end{equation*}
    where $z_a^{p-k}$ is the quantile at level $a$ for the $\chi^2(p-k)$ law .
\end{itemize}

As the test statistic goes to $\infty$ as $n \to \infty$ under any $\Prob(\theta)$ with $\theta$ in the alternative, the test is consistent. As a by-product of Theorem~\ref{thm:distribution_of_lambda_as_a_projection}, we recover the asymptotic distribution of the \textsc{mle}.

\begin{proposition}[Central limit theorem]\label{thmTCL}
    Under Assumptions~\ref{ass:baseline} to~\ref{ass: Fisher_information_is_positive}, for any $\theta_0 \in \Theta_0$, the distribution of $\sqrt{n}(\hat{\theta}_n - \theta_0)$ converges under $\Prob(\theta_0)$ to the distribution of  

    \begin{equation*}
        \argmin_{Y \in H}
        \lVert I(\theta_0)^{\nicefrac{1}{2}} X - I(\theta_0)^{\nicefrac{1}{2}} Y \lVert^2_2 
    \end{equation*}

    where $X \sim \mathcal{N}(0, I(\theta_0)^{-1})$.  Furthermore, denoting by $\Tilde{\theta}^0_n$ the non-null coordinates of the sparse \textsc{mle} $\hat{\theta}_n^0$ and by $\Tilde{\theta}_0$ the non-null coordinates of the true parameter, the distribution of $\sqrt{n}(\Tilde{\theta}^0_n - \Tilde{\theta}_0)$ converges to a centered normal distribution with variance given by the inverse of the principal submatrix of $I(\theta_0)$ corresponding to the null hypothesis parameters. 
\end{proposition}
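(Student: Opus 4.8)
The plan is to obtain both assertions as a direct read-off of the constrained $\textsc{m}$-estimation machinery already assembled for Theorem~\ref{thm:distribution_of_lambda_as_a_projection}, rather than developing any new stochastic input. The starting point is the local quadratic expansion of the log-likelihood at $\theta_0$: writing $\theta=\theta_0+h/\sqrt n$,
\begin{equation*}
    \LogLik{n,\theta_0+h/\sqrt n}-\LogLik{n,\theta_0}
    =
    h^\trans \Delta_n-\tfrac12 h^\trans I(\theta_0)h+r_n(h),
\end{equation*}
where $\Delta_n$ is the suitably normalized score and $r_n(h)$ a second-order remainder. The weak convergence $\Delta_n\to Z\sim\mathcal N(0,I(\theta_0))$ and the stochastic equicontinuity $r_n(h)=o_\Prob(1)$ uniformly on compacts are precisely the facts established en route to Theorem~\ref{thm:distribution_of_lambda_as_a_projection}, through~\eqref{equ:pres_LLN}, \eqref{equ:pres_TCL} and the martingale concentration bound; I would invoke them verbatim here.

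With this expansion, Geyer's~\cite{Geyer}[Theorem 4.4] master theorem applies. Since $\sqrt n(\Theta-\{\theta_0\})$ converges in the Painlev\'e--Kuratowski sense to the tangent cone $H=(0,\infty)^p\times\R^{d-p}$ (the first $p$ coordinates sitting at their lower boundary $0$), the localized maximizer converges,
\begin{equation*}
    \sqrt n(\hat\theta_n-\theta_0)
    \xrightarrow[n\to\infty]{\mathcal L(\Prob(\theta_0))}
    \argmax_{h\in H}\Big(h^\trans Z-\tfrac12 h^\trans I(\theta_0)h\Big).
\end{equation*}
Completing the square gives $h^\trans Z-\tfrac12 h^\trans I(\theta_0)h=\tfrac12 Z^\trans I(\theta_0)^{-1}Z-\tfrac12\big(h-I(\theta_0)^{-1}Z\big)^\trans I(\theta_0)\big(h-I(\theta_0)^{-1}Z\big)$, so setting $X:=I(\theta_0)^{-1}Z\sim\mathcal N(0,I(\theta_0)^{-1})$ converts the argmax over $H$ into $\argmin_{Y\in H}\lVert I(\theta_0)^{1/2}X-I(\theta_0)^{1/2}Y\rVert_2^2$, which is the first assertion. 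This is the very same Gaussian $X$ and the same projection onto $H$ that enter one of the two squared distances of Theorem~\ref{thm:distribution_of_lambda_as_a_projection}, so nothing new is required.

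The second assertion follows by re-running the identical argument with $\Theta_0$ in place of $\Theta$. The key difference is that the tangent cone of $\Theta_0$ at $\theta_0$ is the \emph{linear subspace} $H_0=\{0\}^p\times\R^{d-p}$, so the limiting maximizer is unconstrained in the free coordinates. Partitioning $Z=(Z_1,Z_2)$ and $I(\theta_0)$ into blocks $I_{11},I_{12},I_{21},I_{22}$ according to the split between the $p$ null coordinates and the $d-p$ free ones, and writing $h=(0,\tilde h)$, the maximization collapses to $\argmax_{\tilde h}\big(\tilde h^\trans Z_2-\tfrac12\tilde h^\trans I_{22}\tilde h\big)=I_{22}^{-1}Z_2$. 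Since $Z_2\sim\mathcal N(0,I_{22})$, the limit of $\sqrt n(\Tilde\theta^0_n-\Tilde\theta_0)$ is centered Gaussian with covariance $I_{22}^{-1}I_{22}I_{22}^{-1}=I_{22}^{-1}$, i.e.\ the inverse of the principal block of $I(\theta_0)$ indexed by the free parameters of the null sub-model, as claimed.

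I expect the only genuine obstacle to be bookkeeping rather than substance: one must check that Geyer's hypotheses hold unchanged when the constraint set is shrunk from $\Theta$ to $\Theta_0$ (same quadratic expansion, same score limit $Z$, tangent cone now a subspace), and that the restricted maximizer $\hat\theta_n^0$ is itself consistent so that the local expansion is legitimate at it — this last point is supplied by Proposition~\ref{prop:large_sample_consitency} applied to the sub-model $\Theta_0$. The remaining algebraic steps, namely completing the square and reading off $I_{22}^{-1}$ from the block maximization, are routine.
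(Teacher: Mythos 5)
Your proposal is correct and follows the same backbone as the paper: both assertions are read off from Geyer's constrained \textsc{m}-estimator theorem applied to the local quadratic expansion of the log-likelihood, with the tangent cones $H$ and $H_0$ of $\Theta$ and $\Theta_0$ at $\theta_0$, and completing the square converts the argmax of $h^\trans Z-\tfrac12 h^\trans I(\theta_0)h$ into the projection statement with $X=I(\theta_0)^{-1}Z$. The one place where you genuinely diverge is the variance computation for the sparse \textsc{mle}. The paper stays in the "projection" parametrisation: it writes the minimiser over the linear space $H_0$ as the critical point $h^\star=\Tilde{Z}_{\text{free}}-(I_{22})^{-1}I_{21}\Tilde{Z}_{\text{null}}$ of the quadratic form in $\Tilde{Z}\sim\mathcal N(0,I(\theta_0)^{-1})$, and then needs the block-inverse/Schur-complement identities of Lemma~\ref{Lemma:Schur} in Appendix~\ref{appendix:variance_simplification} to see that the variance of this combination collapses to $I_{22}^{-1}$. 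You instead maximise $\tilde h^\trans Z_2-\tfrac12\tilde h^\trans I_{22}\tilde h$ directly in the score variable $Z\sim\mathcal N(0,I(\theta_0))$, whose marginal on the free block is exactly $\mathcal N(0,I_{22})$, so the variance $I_{22}^{-1}I_{22}I_{22}^{-1}=I_{22}^{-1}$ is immediate and the appendix computation is bypassed entirely. The two are of course equivalent (the paper's $h^\star$ equals your $I_{22}^{-1}Z_2$ after the change of variable $\Tilde Z=I(\theta_0)^{-1}Z$), but your route is the more economical one; the paper's buys an explicit formula for the limiting sparse estimator in terms of the same Gaussian $\Tilde Z$ used throughout Theorem~\ref{thm:distribution_of_lambda_as_a_projection}, at the cost of the Schur-complement lemma. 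Your closing caveats (checking Geyer's hypotheses for the sub-model and the consistency of $\hat\theta_n^0$) match what the paper handles implicitly.
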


\section{Numerical study}\label{section:numerical}

In this section, we illustrate the preceding theoretical results with examples from simulated and real data. While the present work mainly deals with model selection for \textsc{mhp}s, allowing for a null adjacency opens interesting perspective for the univariate Hawkes process. Our first example is concerned with testing for self-excitation in such setting. This allows for an application on Ebay auctions, on which we test for potential deviations from a naive equilibrium. The second  example is concerned with the bivariate Hawkes, which is of frequent use in financial applications. We show how our test may be used to differentiate between different price feedback regimes on intraday power markets. In these first two examples, the asymptotic distribution of the \textsc{lrs} is explicit and independent from the value of the true parameter. Finally, in our third and last example, we show how the test may be conducted in the general case, and demonstrate how Susko's~\cite{Susko} methodology applies in our setting.

\subsection{The Poisson \textit{versus} Self-exciting test }\label{sec:testting_for_excitation}
Self-excitation is the prominent feature of the univariate Hawkes process.   A natural problem is to determine whether the observed event sequence does feature such a property at all. Recall that in spite of the univariate setting of this test, the results of Chen \& Hall are non applicable under a Poisson null and we must have recourse to Theorem~\ref{thm:distribution_of_lambda_as_a_projection} instead.  The Poisson \textit{versus} Hawkes problem was first raised by Dachian \& Kutoyants~\cite{Kuto}, whom propose a  test in the long time asymptotic for the Poisson null hypothesis against the one sided alternative. It has since arisen again in the context of applications. In Kramer \& Kiesel~\cite{KRAMER2021105186} for instance,  the authors identify the $\Bar{\chi}^2$ family of distribution as a the correct asymptotic for the likelihood ratio, but they are unable to conclude.

\subsubsection{Setting and model}

We first work in the setting of Dachian \& Kutoyants~\cite{Kuto}, where the kernel $\varphi \colon \R^+ \mapsto \R^+$ of the Hawkes process depends on a single parameter $\alpha \in [0,1)$
\begin{equation}\label{equ:univariate_exponential_model}
     \varphi \colon t, \alpha   \mapsto
     \alpha \Phi(t),
 \end{equation}

 and is otherwise completely specified by a known positive function $\Phi: \R^+ \mapsto \R^+$. We also let the baseline fluctuate in time according to $
     \mu \colon t, \mu_0, \kappa \mapsto 
     \mu_0 \exp ( \kappa \frac{t}{T})$. Our question amounts to testing \begin{equation}\label{equ:Poisson_null}
         \alpha= 0 \hspace{0.5cm} \text{(the process is a Poisson process),}
     \end{equation} 
     
     against the one sided alternative \begin{equation}\label{equ:Hawkes_alternative}
         \alpha > 0 \hspace{0.5cm} \text{(the process is a true Hawkes process).}
     \end{equation}
     
     The asymptotic distribution under the null of $\Lambda_n$~\eqref{equ:Poisson_null}  is given by Theorem~\ref{thm:distribution_of_lambda_as_a_projection}    as the one of $\mathbb{1}_{X>0}X^2$ where $X \sim \mathcal{N}(0,1)$ (see annex~\ref{annex:chi_bar_def}) and simplifies as \begin{equation}\label{equ:dim_one_mixture}
         \Bar{\chi}^2 = \frac{1}{2} \chi^2_0 + \frac{1}{2} \chi^2_1.
     \end{equation}
     The test provides a staighforward decision rule as it rejects~\eqref{equ:Poisson_null} at the significance level  $a \in (0,\nicefrac{1}{2})$ when $\Lambda_n$ is above the $(1-2a)$--quantile of the standard $\chi^2(1)$ distribution. The framework of Dachian \& Kutoyants~\cite{Kuto} is however too restrictive in practice. The function $\Phi$ usually depends on some additional decay parameter $\beta$, which is not known. We then face the issue that two different values for $\beta$ yield the same distribution for the process under the null~\eqref{equ:Poisson_null}: a non identifiable nuisance parameter must  be dealt with in addition to the boundary problem. The exact same configuration can be encountered in Ritz~\cite{Ritz}[Remark 4] for instance, where the author remarks that the distribution of the \textsc{lrs} then differs from~\eqref{equ:dim_one_mixture}.\\
     
     A naive answer to the issue consists in falling back on the simpler case~\eqref{equ:univariate_exponential_model} by setting $\beta$ at some pre-determined value $\beta^*$ to conduct the test. The distribution of $\Lambda_n$ under the null~\eqref{equ:Poisson_null} is still given by~\eqref{equ:dim_one_mixture} and the level of the test is therefore unchanged. The resulting mis-specification of $\beta$ will however decrease the value of the \textsc{lrs} under the alternative, as underlined by Hansen~\cite{HansenNuisance}. In our case, we find numerical evidence that the subsequent power loss is very modest even for gross mis-specifications of $\beta$. In light of these results, we leave potential improvements on the naive test to further study (see for instance Andrews~\cite{AndrewsNuisance} for sup-likelihood ratio tests). Note also that the question of the test's robustness to errors in $\beta$ goes beyond identifiability questions as the estimation of the decay parameter is subject to notoriously pervasive numerical instability issues.

\subsubsection{Simulation study.} We  report the results for $10000$ simulations over $T=10$s of the point process $N^{(n)}$ with intensity $n \mu$, corresponding to the sum of $n=2000$ non-homogeneous Poisson process with intensity $\mu$. The parameters are set to $\mu_0=2$ and $\kappa=2$, and the simulation performed using a standard thinning approach. At each step we compute the value of the \textsc{lrs} $\Lambda_n$ for the full model against the Poissonian one by numerically maximising the likelihood over the full ($\alpha >0$) and sparse ($\alpha=0$) parameter set corresponding to each hypothesis. Three sets of such simulations are performed, with the kernel of the process in the full model successively taken as a Pareto kernel, an exponential kernel, and a gamma kernel, all with decay $\beta=10$.
\begin{equation}\label{equ:gamma_kernel}
    \Phi \colon t,\beta \mapsto \nicefrac{1}{\beta}(1+t)^{-(1+\beta)}
    \hspace{0.5cm} 
    \Phi \colon t,\beta \mapsto \beta \exp(-\beta t ) 
    \hspace{0.5cm} \text{and} \hspace{0.5cm}
    \Phi \colon t, \beta  \mapsto \beta^2 t \exp( - \beta t).
\end{equation}

 We provide in figure~\ref{fig:qqplotPoissonvHawkes} the \textsc{qq}-plot for the empirical distribution of $\Lambda_n$ conditionally on its being positive, which is expected to follow a $\chi^2(1)$ distribution. In practice, we use a threshold of $\varepsilon=10^{-4}$ above which the \textsc{lrs} is considered positive.\\

\begin{figure}[ht]
\includegraphics[width=.283\textwidth]{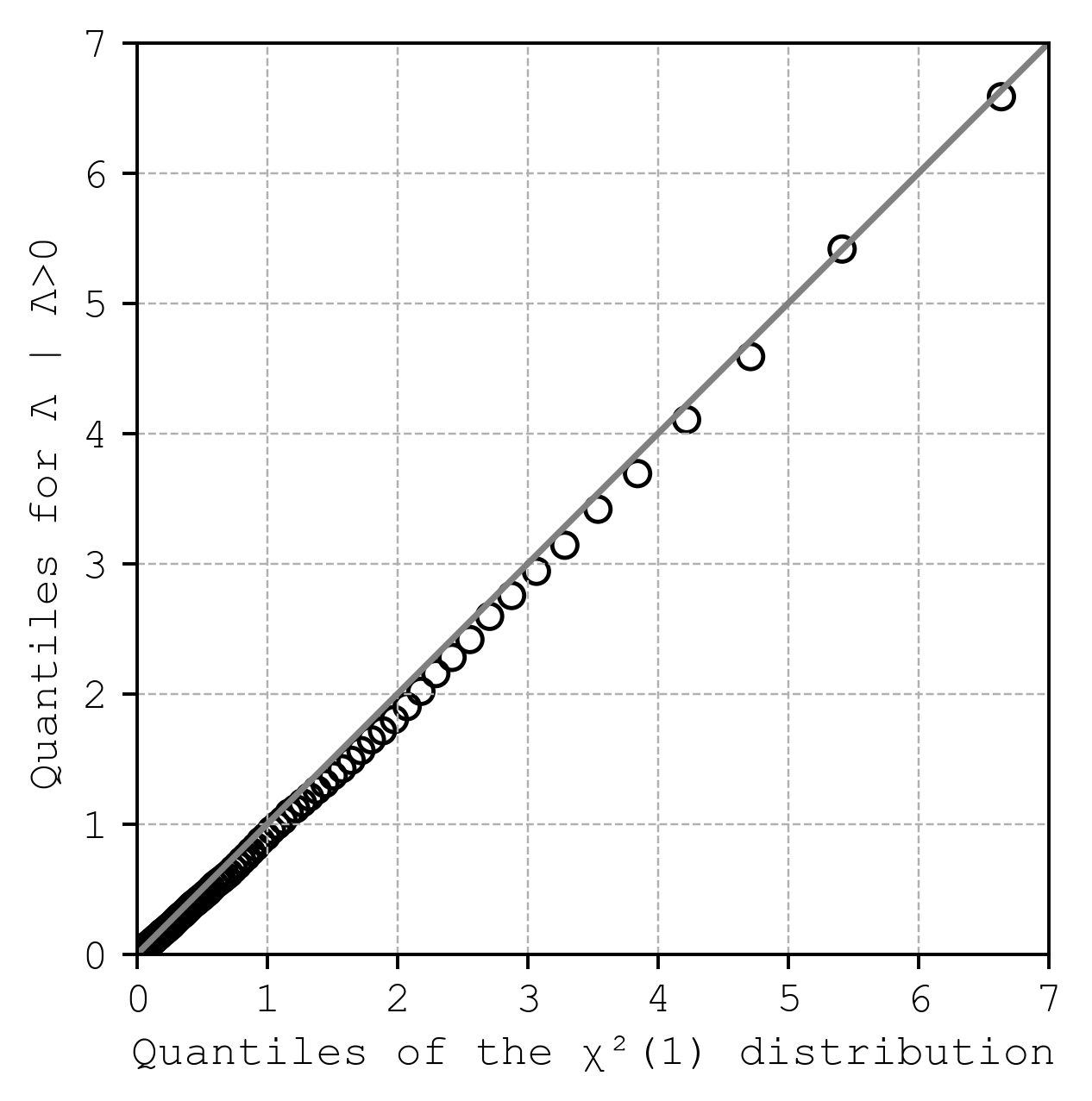}
\includegraphics[width=.27\textwidth]{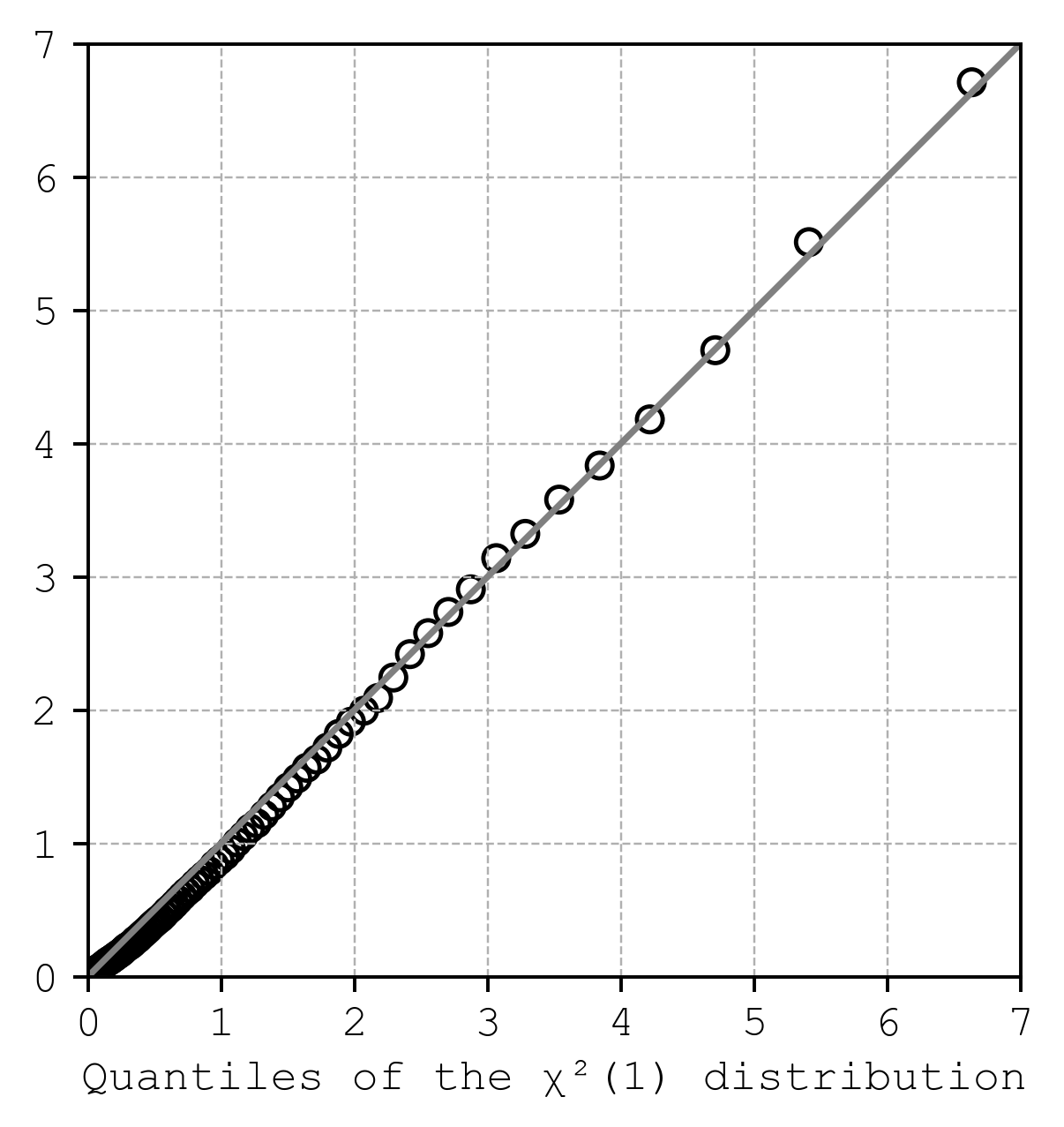}
\includegraphics[width=.27\textwidth]{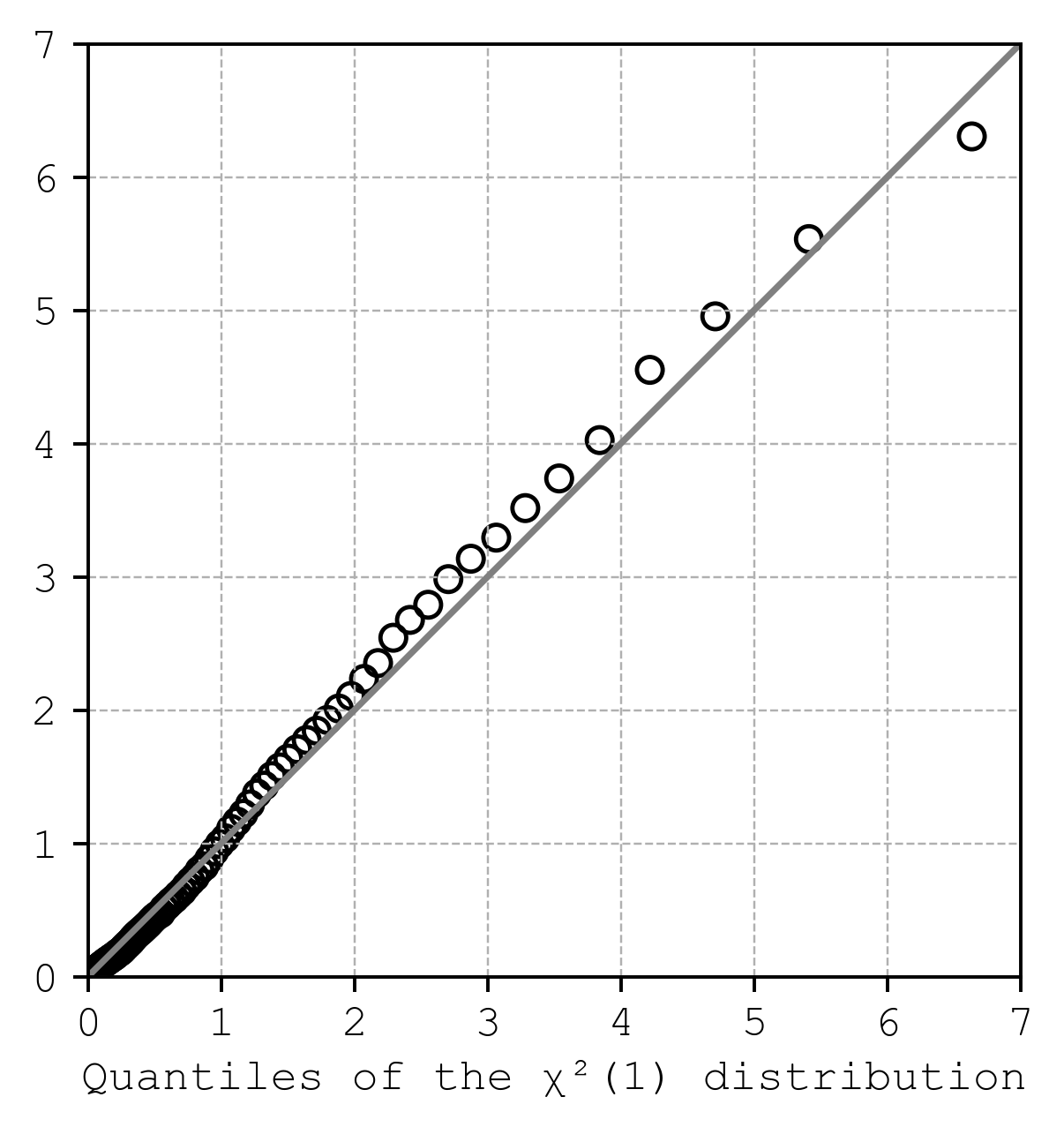}
\caption{\textsc{qq}-plots for $\Lambda_n$ conditional on positivity against its theoretical asymptotic distribution for $n=2000$ trajectories. From left to right: Power-law, exponential and gamma kernels with decay $\beta=10$.}
\label{fig:qqplotPoissonvHawkes}
\end{figure}

Note that under our chosen parameter values, the simulated process averages about $20$ jumps per realisation, which would prevent using standard estimation procedure based on long-time realisations of the process. The likelihood ratio behaves as expected under the null and we turn our attention to the distribution of the \textsc{lrs} under the alternative. Since this supposes the simulation of a large number of Hawkes processes, we limit our numerical study to the case of an exponential Hawkes process to keep the computation time under a reasonable length.\\

A collection of $N=2000$ sets of $n$ trajectories of a Hawkes process with constant baseline $\mu=4$, ecay $\beta=3$, and kernel $\varphi \colon t \mapsto \alpha \beta  \exp( - \beta t)$ is simulated for $12$ different values of the weighted adjacency (or endogeneity) parameter $\alpha$, and $n$ successively set at $n=500$ and $n=2000$, totalling $60$ million simulated trajectories. We compute in figure~\ref{fig:power} the empirical power of the test at the $5\%$ level for two specifications of the decay, one at the true value $\beta_0=3$, the other at $\beta =30$ corresponding to a $900\%$ misspecification.\\

\begin{figure}[ht!]
\includegraphics[width=.35\textwidth]{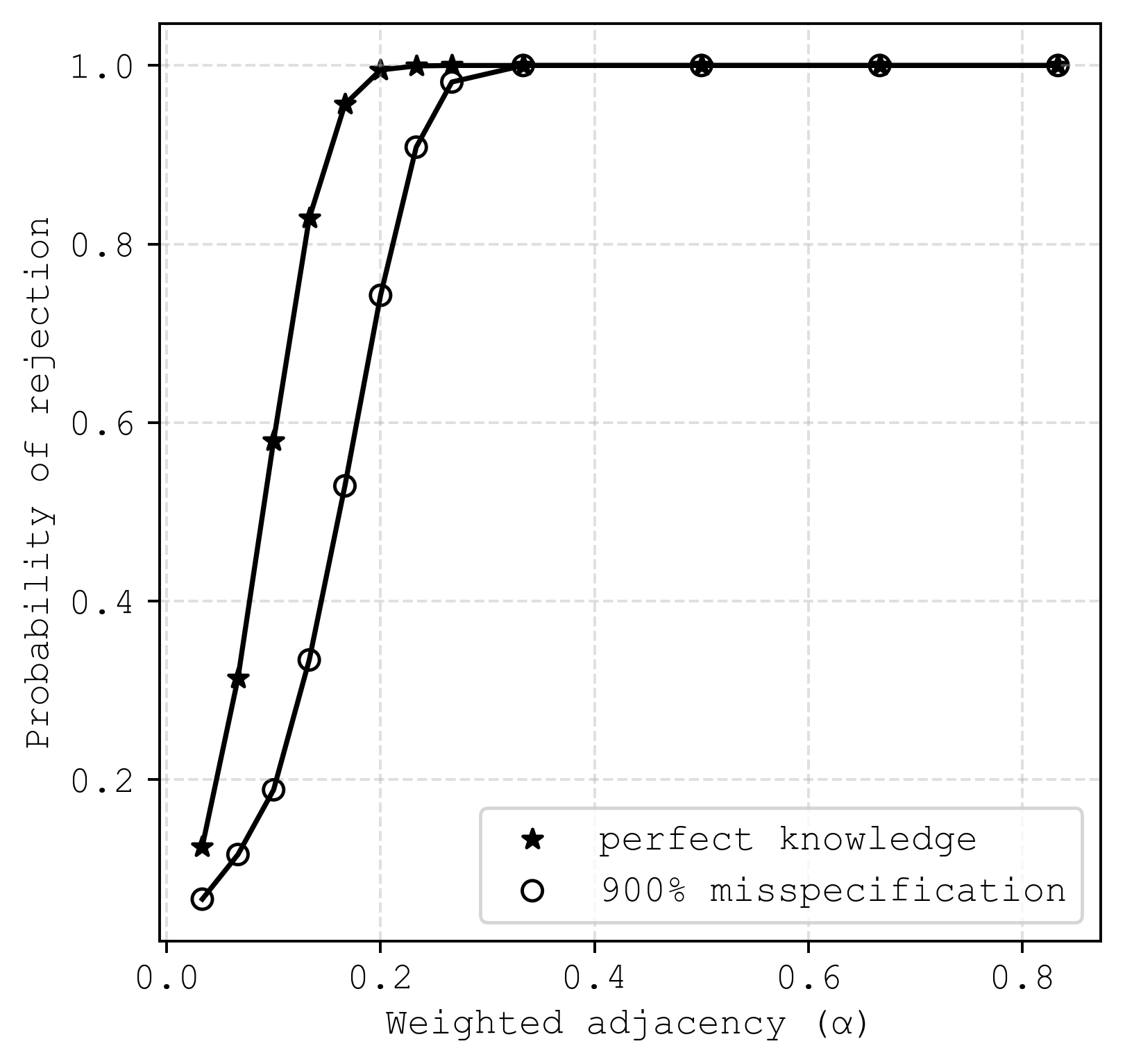}
\includegraphics[width=.35\textwidth]{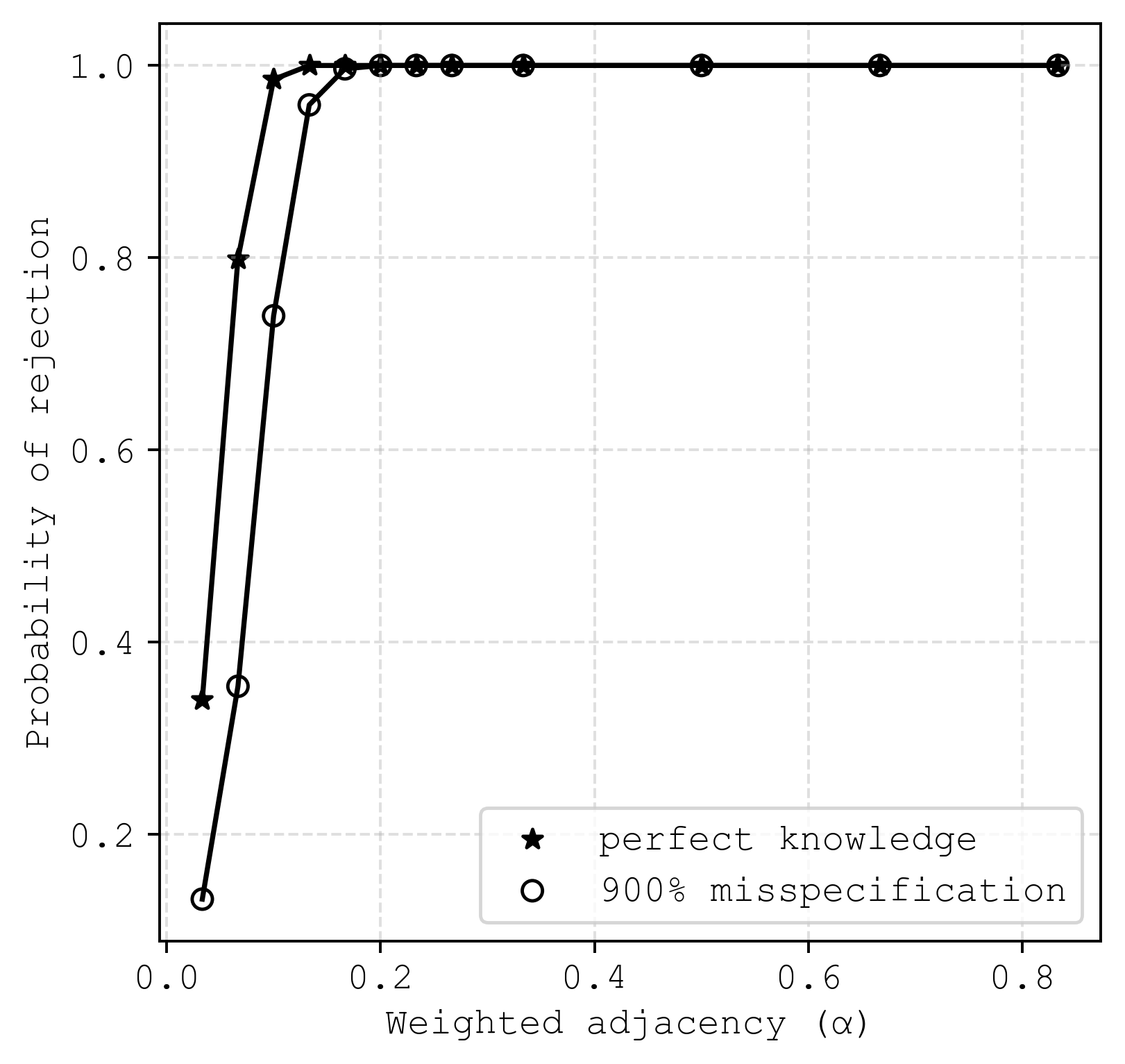}
\caption{Empirical power of the test for $N=2000$ sets of $n$ trajectories. Left: $n=500$ trajectories. Right $n=2000$ trajectories.  }
\label{fig:power}
\end{figure}

The numerical results suggest that a tenfold error in the choice of the decay incurs only a limited power loss, with most of the shift in the power function being contained around small local alternatives. In particular, the test remains consistent. 

\subsubsection{Strategic bidding in online auctions} We turn our attention to the real world setting of bid arrivals on online auction. The Ebay.com website runs second best price open auctions, meaning that the bids are public and the transaction settled at the second highest recorded bid. Were the bids sealed and static instead of public and dynamic, this would give rise to a Vickrey auction (see Krishna~\cite{krishna}[p. 181]), where a Nash equilibrium is reached with participants bidding their full capacity. If Ebay bidders are insensitive to each other's behaviour, the observed equilibrium should resemble the one of the Vickrey auction. One  would then expect bid arrivals to occur independently consistently with an homogeneous Poisson model.  This is of course not what is observed in practice. Shmueli \textit{et al.}~\cite{shmueli} remark bid arrivals depend on complex factors, and use a three-stages non-homogeneous Poisson process to account for the changing nature of the process. Participants engage for instance in last minute bidding, a behaviour known as \textit{sniping}. We propose an explanation for the deviation from the static equilibrium based on the categorisation of bidders into two cohorts:\\ 

\begin{itemize}
    \item \textit{Price sensitive} bidders, whose estimation is dynamic and depends on other bidder's behaviour.\\  
    \item \textit{Price insensitive} bidders, who place a single bid at full capacity, either at an entirely random time, or at the last few moments in an effort to squeeze out price-sensitive bidders. \\
\end{itemize}

We aim to prove the existence of the first category of agents by fitting a Hawkes model on bid arrival times. Ebay auctions participation is mostly human and we use a gamma kernel~\eqref{equ:gamma_kernel} to reflect the additional reaction time this incurs. We also retain the exponential baseline of model~\eqref{equ:univariate_exponential_model} in an effort to match the increasing intensity pattern.  If bidders were to all belong to the first category, the increasing Poisson-like exponential intensity should account entirely for the sniping phenomenon. If it results instead from agents competitively interacting, the Hawkes model is more appropriate. Our null and alternative hypotheses are therefore the same as for the preceding simulation study, that is a Poisson null $\alpha=0$ against the one-sided positive alternative. Should the test reject the null hypothesis, Ebay auctions would fall in the framework of interdependent values. The dynamic nature of equilibrium bidding strategies is well documented in such models (Krishna~\cite{krishna}[Proposition 6.2 p. 91]), consistently with our proposed interpretation. \\

The data consists in bid arrival times of $628$ Ebay auctions opened and settled in $2020$ for luxury watches, digital assistance devices and game consoles. Auctions may run over $7,5$ or $3$ days from which we keep only the last $2.8$ days. This allows for a simple synchronous model, and avoids the non stationarity due to novelty-induced bids at the start of $3$-days auctions, which are of little relevance to our question.\\

We set the parameter space for the decay as the closed real interval from $1$ to $10000$ days$^{-1}$ and repeat the test with the two values $(\beta_1,\beta_2)=(5,1000)$ to reduce the potential misspecification. Using the Bonferroni correction
\begin{equation}\label{equ:bonferroni}
\Prob \Big[ 
\bigcup_{i=1}^m \big\{ \Lambda_n(\beta_i) \geq q_{1-\frac{a}{m}}\big\}
\Big]
\leq 
\sum_{i=1}^m
\Prob \big[ \Lambda_n(\beta_i) \geq q_{1-\frac{a}{m}} \big]
\leq m \frac{a}{m}= a,
\end{equation}

we obtain a level $a$ test from the repeated procedure by rejecting the null as soon as one of the \textsc{lrs} is above the quantile of order $1 - \nicefrac{a}{m}$ of the $\Bar{\chi}^2_1$ distribution. When the value set for $\beta$ is large, the repetition allows to recoup some of the lost power as long as $m$ is kept reasonably small (here $m$=2). We report in table~\ref{tab:ebay_auctions} the \textsc{lrs} and p-value for each $\beta_i$, with $\beta_0=0$ representing the Poisson null.\\

\begin{table}[h!]
  \begin{center}
    \begin{tabular}{  l  c c c c c  }
    \hline
    $\beta$ & $\hat{m}$ &  $\hat{\kappa}$ & $\hat{\alpha}$ & $\textsc{lrs}$ & p \\
    \hline
    $0$ & $4.95$ & $3.43$ &  \textsc{n.a} & \textsc{n.a} & \textsc{n.a}  \\
    $5$ &  $0.24$ & $3.7$ & $0.96$ &  $2.26\text{e}2$& $7\text {e}-51$ \\
    $1000$ & $0.31$ & $1.50$ & $0.89$ & $4.88\text{e}3$ & $\sim 0$  \\
    \hline
    
    \end{tabular}
    \caption{Estimated parameters and likelihood ratio statistic against the Poisson null of a gamma Hawkes process on bid arrival times of $627$ ebay auctions.}
    \label{tab:ebay_auctions}
  \end{center}
\end{table}

Both decay choices yield p-values far below $0.005$, the  Poisson model is rejected at the level $0.01$,  matching our supposition of the price being formed through competitive price discovery. Though the second-price mechanism of Ebay auctions does not reward late bidding, their dynamic nature favorises interdependent strategies,  in which bidders await and react to the information flow from competing bids.    Providing a complete model for the dynamic of the auction would require a more flexible baseline, mimicking the varying intensity of Shmueli \textit{et al.}~\cite{shmueli}. Such ambition is out of the scope of this exploratory example which is intended to showcase the general relevance of our inference procedure in the context of dynamic auctions. In doing so, we have uncovered a feedback mechanism between bidders' valuations and bid values. Finally, we acknowledge our conclusions are perhaps dataset-specific. The auctions set we estimated the model upon suffers from a high prevalence of luxury goods, the demand for which features a characteristic positive price elasticity. Accessing additional data would here allow for more general conclusions. 

\subsection{Testing for cross-excitation in the bivariate Hawkes process}

Suppose one wishes to test for the joint significance of two (weighted) adjacency coefficients $\alpha_{ij}$ and $\alpha_{kl}$  belonging to different coordinates $k$ and $l$ of a multivariate Hawkes process.  From the expression of the Fisher information~\eqref{equ:Fisher_information}, that $k \neq l$ entails the asymptotic independence of the $\hat{\alpha}_{ij}$ and  $\hat{\alpha}_{kl}$. One may then derive the limit distribution of the likelihood ratio as
\begin{equation}\label{equ:chi_bar_id}
    \Bar{\chi}^2 
    =
    \frac{1}{4} \chi^2_0
    + \frac{1}{2} \chi^2_1
    +\frac{1}{4} \chi^2_2
\end{equation}

by remarking the weights of the mixture express either in terms of the probability of two independent Gaussian variables being positive or of a single Gaussian being positive.  Alternatively, setting the antidiagonal term to $0$ in example $7$ of Self \& Liang \cite{Self-Liang} yields the same result. While somewhat specific, the present example has a practical use in the context of financial statistics, and we shape our simulations and application around the microstructure model of Bacry \textit{et al}.

The price of an asset $(P_t)$ over a timeframe $t \in [0,T]$ is modelled as an accumulation of positive and negative jumps across the discrete price grid, respectively corresponding to pure jump processes $(N^+_t)$ and $(N^-_t)$ with null initial conditions:
\begin{equation*}
    P_t - P_0 =  N^+_t - N^-_t.
\end{equation*}

For liquid enough assets, the processes are simple as price jumps rarely exceed the mesh size of the grid (know as the \textit{tick} size). In the converse, the $(N^{\pm}_t)$ are marked with price increment sizes. The two jump processes are chosen as the coordinates of a bivariate Hawkes process with baseline $\mu$ and kernel $\varphi$. This model, originally intended for high frequency prices at the level of the tick grid, has also become an instrument of choice for power prices, which retain at large time scales some features reminiscent of microstructure phenomena -- see Deschatre \& Gruet~\cite{DeschatreGruet} for a price model an see Kramer \&  Kiesel~\cite{KRAMER2021105186} for order book dynamics.  From now on we suppose the baseline and kernel take the shapes
\begin{equation}\label{equ:exmaple2_model}
\mu = \exp( \kappa t) \begin{bmatrix}
    m & m
\end{bmatrix}^\textsc{t} 
\hspace{0.5cm}
    \varphi \colon t,x \mapsto 
    \beta x
    \begin{bmatrix}
         \gamma_1 & \alpha \\
        \alpha& \gamma_2
    \end{bmatrix}
    \exp( - \beta t).
\end{equation}

In the classical setting of Bacry \textit{et al.}, the price trajectories are guided purely by reversion effects, corresponding to the $\gamma_1=\gamma_2=0$ regime. One may wonder whether this kernel shape, which was introduced to account for the negative auto-correlation of price increments at high frequency remains reasonable at larger scale. This corresponds to testing the following hypotheses:
\begin{align}\label{equ:hyp_for_testing_bracy_et_al}
        \mathcal{H}_0 &: \gamma_1  =0, \:  \gamma_2 = 0   &&\text{  (Prices are purely reverting),} \nonumber \\
    \mathcal{H}_1 &: \gamma_1>0, \: \gamma_2 > 0 &&\text{ (Prices have a momentum component),}
\end{align} 
where the term \textit{momentum} is borrowed from the financial vernacular to designate a self-sustained directional dynamic in prices.

\subsubsection{Simulation study.} The simulated process has the specification~\eqref{equ:exmaple2_model} with $m=5$, $\alpha=5$, $\kappa=2$Hz, $\beta=10$Hz and lives under the null $\gamma_1=\gamma_2=0$. The distribution of the marks is the one of $\lvert X \rvert + \delta$ where $X \sim \mathcal{N}(0,1)$ and $\delta=10^{-2}$ is an offset parameter meant to prevent numerical error in the simulation. We report the results of repeated estimations for a set of $10000$ simulations over a timeframe of $T=1s$ and $n=800$. Figure~\ref{fig:qqplot} displays the \textsc{qq}-plot for the values obtained for $\Lambda_n$, conditionally on its being strictly positive, with a non-nullity threshold of $\varepsilon=10^{-5}$.\\

\begin{figure}[h!]
\centering
\includegraphics[width=.3\textwidth]{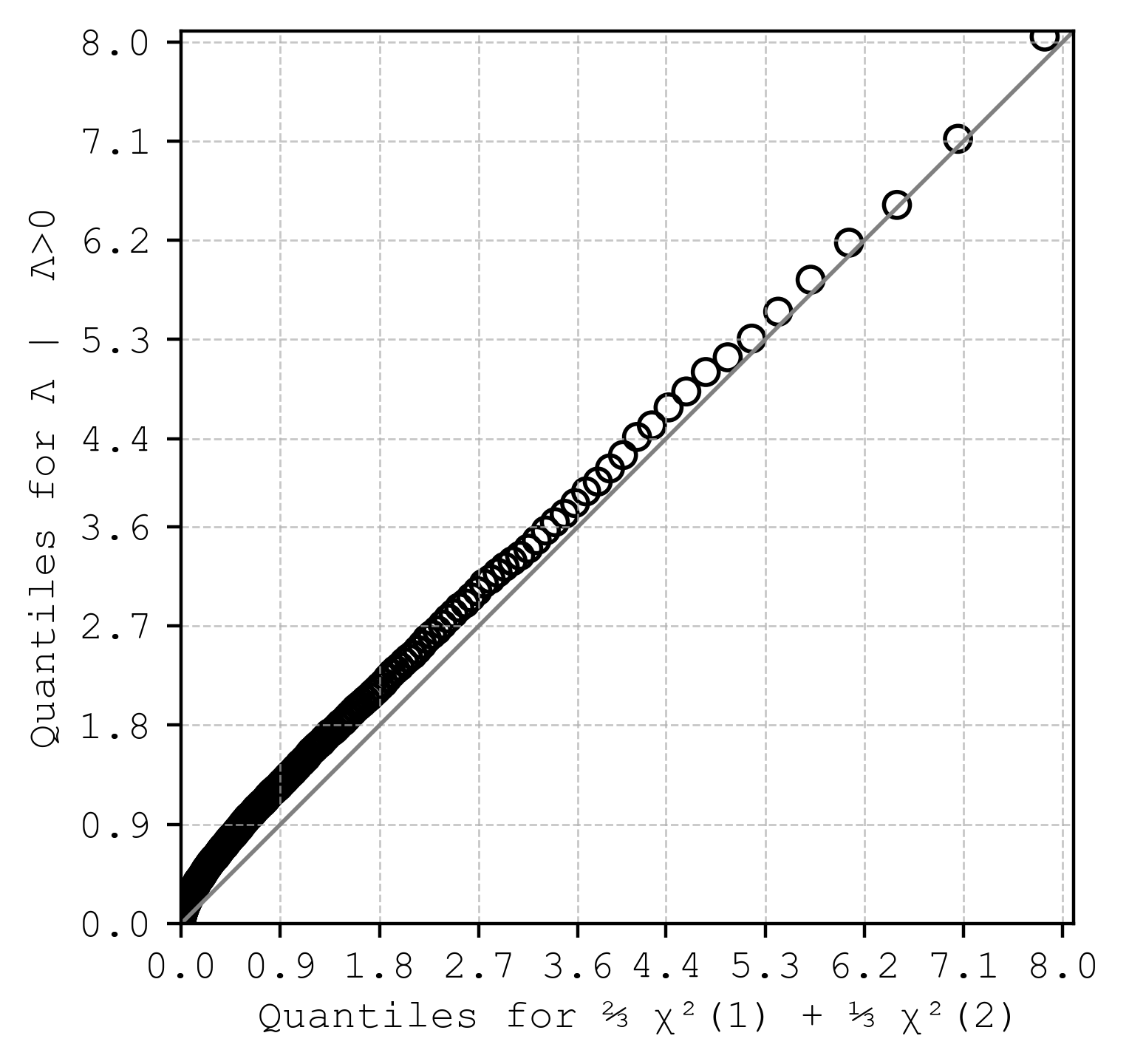}
\caption{\textsc{qq}-plots for $\Lambda_n$ conditionally on its positivity for $N=10000$ simulations of  $n=800$ trajectories of a marked \textsc{mhp} with characteristics~\eqref{equ:exmaple2_model} and absolute normal marks. }
\label{fig:qqplot}
\end{figure}

Conditional on positivity, a chi-bar variable with weights~\eqref{equ:chi_bar_id} has a $\nicefrac{2}{3} \chi^2(1) + \nicefrac{1}{3} \chi^2(2)$ law, and one observes $\max(\Lambda_n,\varepsilon) \simeq \max(\Lambda_n,0)$ to converge well enough to such a distribution .

\subsubsection{Single asset price dynamics of late night delivery power futures}\label{example:example2} We proceed to our application to intraday power prices. The dataset consists of $4$-hours long windows from $5$ to $1$ hour to delivery of hourly power future mid-prices\footnote{Arithmetic average of best bid and best ask prices.} for the $n=364$ trading sessions occuring from deliveries from January $1$, $2019$ to December $30$th, $2019$. The exclusion of the last hour of trading serves so as to avoid taking into account effects associated with the delivery zone reducing in size after that point.  In Deschatre \& Gruet~\cite{DeschatreGruet}, the authors focus on the most liquid of the hourly products of the German intraday power market. Thanks to the aggregation procedure~\eqref{equ:aggregation}, we are also able to estimate the Hawkes-based power prices model on late night and early morning deliveries future contracts, which are less intensively traded. The retained futures cover the $12$ consecutive hourly delivery periods beginning from $18$h to $5$h.
\vspace{0.2cm}
 
\begin{table}[h!]
\centering
\begin{tabular}{  c  c c c c c c c c c c c c   }
\hline
Delivery start & 18h & 19h & 20h & 21h & 22h & 23h & 0h & 1h & 2h & 3h & 4h & 5h   \\
\hline
$\alpha^\sharp$   & $2.30$ & $2.54$ & $2.11$& $2.21$ & $2.40$ & $2.11$& $2.30$ & $3.04$ & $2.90$ & $2.44$ & $2.20$ & $2.35$   \\
$\gamma_1$  & $2.03$ & $2.26$ & $2.41$& $2.25$ & $2.77$ & $1.37$ & $2.03$ & $2.41$ & $2.26$ & $2.71$ & $2.33$ & $2.35$   \\
$\gamma_2$& $1.62$ & $2.49$ & $2.30$ & $2.01$ & $2.36$ & $1.34$ & $1.62$ & $2.27$ & $2.20$ & $2.35$ &  $1.69$ & $1.52$   \\
$\beta^\sharp$  & $930$ & $516$ & $559$  & $510$ & $403$ & $369$ & $930$ & $460$& $350$ & $556$ & $488$ & $626$   \\
$\mu_0^\sharp$& $68.2$ & $95.0$ & $104$  & $99.3$ & $98.1$ & $67.6$ & $68.2$ & $60.9$ & $66.6$ & $51.1$ & $46.7$ & $93.0$   \\
$\kappa^\sharp$ & $1.14$ & $0.74$ & $0.76$ & $0.74$ & $0.60$ & $0.54$ & $1.14$ & $0.89$ & $0.91$ & $1.11$ & $1.21$ & $0.56$   \\
\hline
$\alpha^\flat$  & $3.86$ & $4.40$ & $4.44$ & $4.19$ & $4.89$& $3.27$ & $3.86$ & $5.38$ & $4.99$ & $4.70$ & $4.02$ & $3.99$   \\
$\beta^\flat$  & $692$ & $385$ & $371$  & $467$  & $302$ & $327$ & $692$ & $296$ & $253$ & $444$ & $470$ & $515$   \\
$\mu_0^\flat$& $70.2$ & $61.9$ & $105$  & $101$ & $98.9$ & $67.6$ & $70.2$ & $47.6$ & $62.3$ & $69.3$ & $77.1$ & $96.8$   \\
$\kappa^\flat$ & $1.16$ & $1.22$& $0.77$ & $0.75$ & $0.61$ & $0.58$ & $1.16$ & $0.88$ & $0.55$ & $0.93$ & $0.89$ & $0.58$   \\
\hline
$\mathbb{E}[X]$ & $0.10$ & $0.10$ & $0.10$ & $0.1°$ & $0.11$ & $0.11$ & $0.10$ & $0.09$ & $0.10$ & $0.1$ & $0.11$ & $0.11$   \\
$\eta^\sharp$ & $0.41$ & $0.47$ & $0.44$ & $0.46$ & $0.55$ & $0.46$ & $0.41$ & $0.54$ & $0.51$ & $0.50$ & $0.42$ & $0.34$   \\
$\eta^\flat$ & $0.39$ & $0.48$ & $0.45$ & $0.41$ & $0.54$ & $0.48$ & $0.39$ & $0.54$ & $0.50$ & $0.47$ & $0.43$ & $0.43$   \\
\textsc{lrs} & $647$ & $536$ & $371$ & $510$ & $644$ & $280$ & $647$ & $344$ & $249$ & $718$ & $954$ & $1379$   \\
p & $\sim 0$ & $\sim0$ & $\sim 0$ & $\sim 0$ & $\sim0$ & $\sim 0$ & $\sim 0$ & $\sim 0$ & $\sim 0$ & $\sim 0$ & $\sim 0$ & $\sim 0$   \\
\hline
\end{tabular}
\caption{Estimates for model~\eqref{equ:exmaple2_model} as applied to German intraday hourly power futures mid-prices, $5$ to $1$h to delivery on the first $364$ trading sessions of $2019$. The $^\sharp$ and $^\flat$ 
 superscripts respectively denote the full and sparse models.}
\end{table}

The null hypothesis is rejected with high certainty for each future contract, indicating the existence of a statistically significant momentum component in the dynamics of German intraday power prices. We hypothesise this stems from market participants competing to capture scarce liquidity. The cascading patterns observed in mid-prices hint at liquidity reconstituting slower than it is consumed, which would be consistent with our interpretation. Using a stochastic baseline Hawkes process, Kramer and Kiesel~\cite{KRAMER2021105186} find an influence of wind and solar forecast errors on exogenous order arrivals. Marking the kernel for similar exogenous factors could allow for testing potential relations between common stresses on power traders and episodes of directional variations in price.\\

Remarkably, our estimations situate the price endogeneity $\eta$ around the low value of $0.45$ which is consistent with the findings of Deschatre \& Gruet~\cite{DeschatreGruet}, meaning that price moves do not generate as many reactions as one would expect. This would put the intraday power markets at odds with more conventional asset classes, were values close to criticality $\eta=1$ are repeatedly observed (see for instance Hardiman \textit{et al.}~\cite{Hardiman}~\cite{HardimanBouchaud}), reflecting a no arbitrage condition (see Jusselin \& Rosenbaum~\cite{jusselin2018noarbitrage}). One  could assess the robustness of this conclusion and its potential reasons by repeating the preceding analysis with different choices for the marks, as precedently suggested. The low endogeneity would be compatible with the conjecture of liquidity constraints being an important price driver, since it suggests some sense of "missing participants". The particularities of power trading (which is typically a  capital-intensive activity and incurs specific risks) may indeed be repulsive to some of the conventional liquidity providers.  Depending on the role of such  participants, the momentum component could vanish on more accessible markets. In all generality, one would expect the additional barriers to entry to limit the diversity of market participants and favour the appearance of significant non-reverting patterns in price dynamics as a result of the alignment of traders' incentives and constraints.     \\

\subsection{Testing for an arbitrary set of adjacency coefficients}~\label{ex:general_example}

\subsubsection{Setting and procedure}Apart from the simple configurations above, the weights are usually intractably tied to the value of the true parameter. As exposed with Proposition~\ref{proposition:the_chi_bar_distribution}, a conditional test can then be conducted. Recall that, from Susko's theorem~\cite{Susko}, conditionally on $k$ zeros being observed in the \textsc{mle}, the \textsc{lrs} $\Lambda_n$ closely follows a $\chi^2(p-k)$ distribution. The resulting procedure is the following: if $\Lambda_n$ is null, the test never rejects $\mathcal{H}_0$. If $\Lambda_n$ is positive, the full and sparse \textsc{mle} should differ and a number $k<p$ of null adjacency coefficients are observed in $\hat{\theta}_n$.  The test then rejects $\mathcal{H}_0$ when $\Lambda_n$ is above the quantile of chosen level for the $\chi^2(p-k)$ distribution.

\subsubsection{Simulation study} We apply the general procedure to a $10$-dimensional Hawkes processes. The exponential kernel was again retained to keep an acceptable computation time.  Under the null, the process is a juxtaposition of two $5$-dimensional Hawkes process with cyclic interaction patterns, each parameterised by a single adjacency coefficient $\alpha_i$. Under the alternative, four coordinate pairs interact, binding the two cycles through four weights $\gamma_1 \cdots \gamma_4$.  The precise graph of interactions is encoded into Figure~\ref{fig:pattern} below.

\begin{figure}[h!]
\begin{center}
    \includegraphics[width=0.45\textwidth]{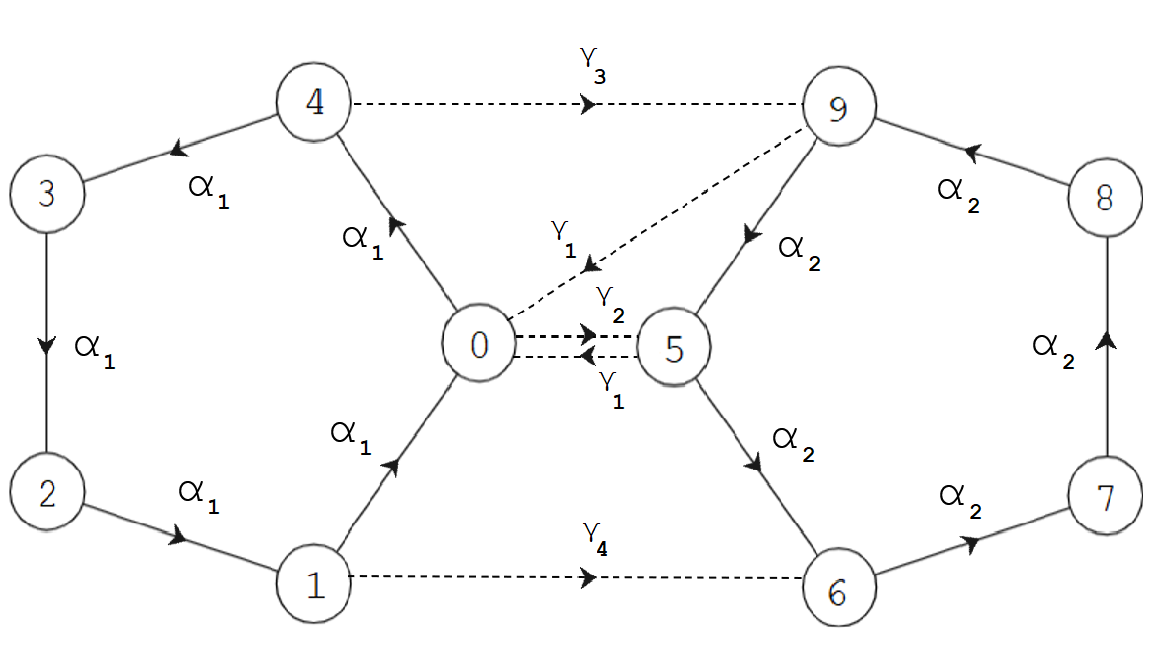}\hfill
\end{center}
\caption{Interaction graph of the example \textsc{mhp} model. Full edges: interactions under the null. Dotted edges: Additional interactions under the alternative.}
\label{fig:pattern}
\end{figure}

The  baseline and decay are constant and set to the same values $\mu=8$, $\beta=10$ for all $10$ coordinates. In figure~\ref{fig:conditional_test} are the \textsc{qq}-plots for the distribution of $\Lambda_n$ conditionally on the number of zeros being observed among the \textsc{mle}s, with a threshold at $\varepsilon=10^{-5}$. We observe the conditional distributions of the \textsc{lrs} converge well to the respective asymptotic counterparts they are given by Susko's theorem. An occasional degree mis-attribution seemingly persists, resulting in a more pronounced deviation of the distribution conditional on the effective degree of the \textsc{lrs} being $1$.   \\

\begin{figure}[h!]
\begin{center}
    \includegraphics[width=0.55\textwidth]{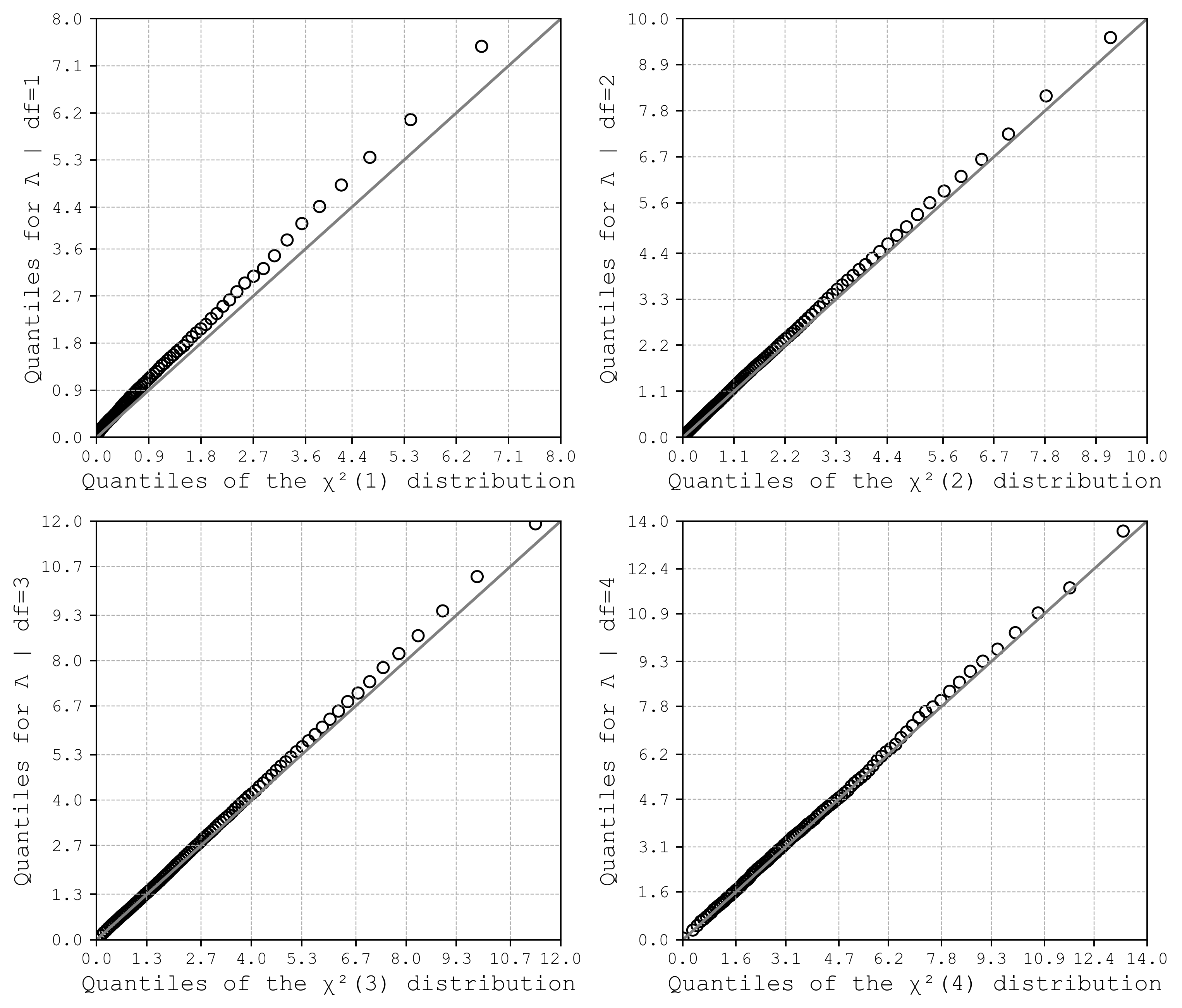}\hfill
\end{center}
\caption{Conditional \textsc{qq}-plots for the empirical distribution of $\Lambda_n$ against their theoretical $\chi^2(p-k)$ asymptotic distribution. $60 000$ simulations, $n=1000$.}
\label{fig:conditional_test}
\end{figure}

\subsubsection{Multi-asset price dynamics of late night delivery power futures}

We continue with German intraday power futures mid prices.  Our aim here is to retrieve the temporal dependency structure between multiple hourly futures. The dataset is the one of example~\ref{example:example2}. We consider the four hourly futures with deliveries starting at each hour from $22$ to $1$h on the $364$ trading sessions of $2019$. From each session is kept the $4$ hour time window from $21$h to midnight.  The model of Deschatre \& Gruet~\cite{DeschatreGruet} is extended to incorporate inter-asset excitation in addition to reversion and momentum factors.  This results in the kernel shape
\begin{equation*}
    \alpha= (\alpha_{kl})= \begin{bmatrix}
        a_1 & a_2 & 0 & 0 & 0 & 0 & 0 & 0\\
        a_2 & a_1 & 0 & 0 & 0 & 0 & 0 & 0\\
        \gamma_1 & 0 & a_1 & a_2 & 0 & 0 & 0 & 0\\
        0 & \gamma_1 & a_2 & a_1 & 0 & 0 & 0 & 0\\
        \gamma_4 & 0 & \gamma_2 & 0 & a_1 & a_2 & 0 & 0\\
        0 & \gamma_4 & 0 & \gamma_2 & a_2 & a_1 & 0 & 0\\
        \gamma_6 & 0 & \gamma_5 & 0 &  \gamma_3 & 0 & a_1 & a_2 \\
        0 & \gamma_6 & 0 & \gamma_5 &  0 & \gamma_3 & a_2 & a_1 \\
    \end{bmatrix}
\end{equation*}
where the simple cross-dependence structure is adapted from Bacry \textit{et al}~\cite{bacrylimit}[section 5.2]. The model forces a lead-lag relation wherein the less liquid late night futures react to price moves in the more liquid evening deliveries. This rules out temporal inconsistencies in which market factors materialising past the delivery period of a future may move its price by a ricocheting along the edges of the causal graph of $(\boldsymbol{N}_t)$.  We  have also taken advantage of the closeness of momentum and reversion factors estimates of example~\ref{example:example2}, keeping only two parameters for the $a_i$s. Likewise, we take only one decay parameter $\beta$ for the whole kernel. More precisely, our kernel has the shape
\begin{equation*}
    \varphi_{kl} : t,x \mapsto \alpha_{kl} x \exp( - \beta t).
\end{equation*}
A specificity of our setting is the asynchronicity of trading windows for each future.  Unrestricted trading of the asset expiring at hour $n-1$ halts one hour before it does for the future expiring at hour $n$, which is a substantial fraction of the estimation window.  Our marked Hawkes process lives on a multidimensional triangular time-frame $[0, T_1] \times \cdots \times [0,T_4]$, which only incurs an innocuous modification of the log-likelihood and does not alter our results. Accordingly, the baseline is parameterised by two $4$-dimensional parameters $(m_i)$ and $(\kappa_i)$ and by the known end times $(T_i)$:
\begin{equation*}
    \mu : t \mapsto \begin{bmatrix}
        m_1 e^{\kappa_1 \frac{t}{T_1}} & m_1 e^{\kappa_1 \frac{t}{T_1}} &
        m_2 e^{\kappa_2 \frac{t}{T_2}} & m_2 e^{\kappa_2 \frac{t}{T_2}} &
        m_3 e^{\kappa_3 \frac{t}{T_3}} & m_3 e^{\kappa_3 \frac{t}{T_3}} &
        m_4 e^{\kappa_4 \frac{t}{T_4}} & m_4 e^{\kappa_4 \frac{t}{T_4}} 
    \end{bmatrix}^\textsc{t}.
\end{equation*}

Transaction costs may incentivize a reduction in the number of simultaneously traded assets. Such efficiency gains should come naturally in a \textsc{mhp}-based approach as the model isolates direct interactions. We record in table~\ref{tab:multidim_intraday} the results of our estimations for adjacency parameters.\\

\begin{table}[h!]
  \begin{center}
    \begin{tabular}{  c  c c c c c c c c    c  }
    \hline
    Model  & $\hat{a}_1$ & $\hat{a}_2$  &  $\hat{\gamma}_1$& $\hat{\gamma}_2$& $\hat{\gamma}_3$& $\hat{\gamma}_4$& $\hat{\gamma}_5$& $\hat{\gamma}_6$ & \textsc{lrs}\\
    \hline
    every $\gamma_j>0$  & $4.26$ & $4.75$ & $1.34$ & $0.93$ & $0.95$ & $0.29$ &  $0.26$ & $0.13$ & \textsc{n.a}\\
    $\alpha_{ij}=0$ when $i>j+1$   & $3.9$ & $5.12$  &  $1.24$& $0.95$& $1.54$& \textsc{n.a}& \textsc{n.a}& \textsc{n.a}& $2361$\\
    $\alpha_{ij}=0$ when $i>j+2$   & $4.2$ & $4.89$  &  $1.19$& $1.22$& $1.47$& $0.29$& $0.9$& \textsc{n.a} & $1569$\\
    \hline
    \end{tabular}
    \caption{Inferred adjacency parameters for the first $364$ trading sessions of $2019$ on the German intraday power market. }
    \label{tab:multidim_intraday}
  \end{center}
\end{table}

A first test is run with null hypothesis that direct dependence between futures vanishes beyond a $2$ hours delay, that is $\gamma_4=\gamma_5=\gamma_6=0$ against the alternative that they are all positive. We use a threshold of $\varepsilon=10^{-5}$ under which estimates are considered null. Since no estimates fall below $\varepsilon$, the p-value for the test is obtained by comparing the \textsc{lrs} with the quantile of appropriate level of a $\chi^2(3)$ distribution. As the null is rejected with high certainty, a second test is run with the null hypothesis that $\gamma_6=0$ against the one-sided alternative. The null is again rejected at the $0.05$ confidence level. Despite these unexpected results it is compelling to note the amplitude of interactions between two futures $i,j$ decreases fast as $T_i - T_j$ increases. In practice, the transactions costs of the intraday power market will often prevent executing on such weak signal.

\subsection{Comparison to other methods}

Our method is natural in that it leverages the specific property of linear Hawkes processes of remark~\ref{remark:clusters_addition} into a useful Poisson approximation of the Hawkes process. When $T$ is large enough to allow for credible trajectory-by-trajectory estimations, different procedures are usually preferred. In Bonnet \textit{et al.}~\cite{Martinez}, the authors estimate the \textsc{mle} on each trajectory and average the resulting sequence. In Deschatre \& Gruet~\cite{DeschatreGruet}, the likelihood for the full dataset (or the sum of each log-likelihood) is maximised. One may wonder whether the two more conventional approaches keep producing acceptable result as $T$ decreases and how they compare to our method. We apply the three options to an univariate Hawkes model with constant baseline and an exponential kernel $t \mapsto \alpha \beta \exp{-\beta t}$ .  In figure~\ref{fig:comparison} are the empirical distribution of the estimates for  $\alpha$ according to each method, the ground truth being an homogeneous Poisson process $\alpha=0$ with intensity $\mu=5$Hz. The simulations are performed on a horizon of $T=10s$. Note that this creates about the same number as in a trajectory of the real world dataset of example~\ref{sec:testting_for_excitation}.\\

\begin{figure}[ht!]
\begin{center}
    \includegraphics[width=0.55\textwidth]{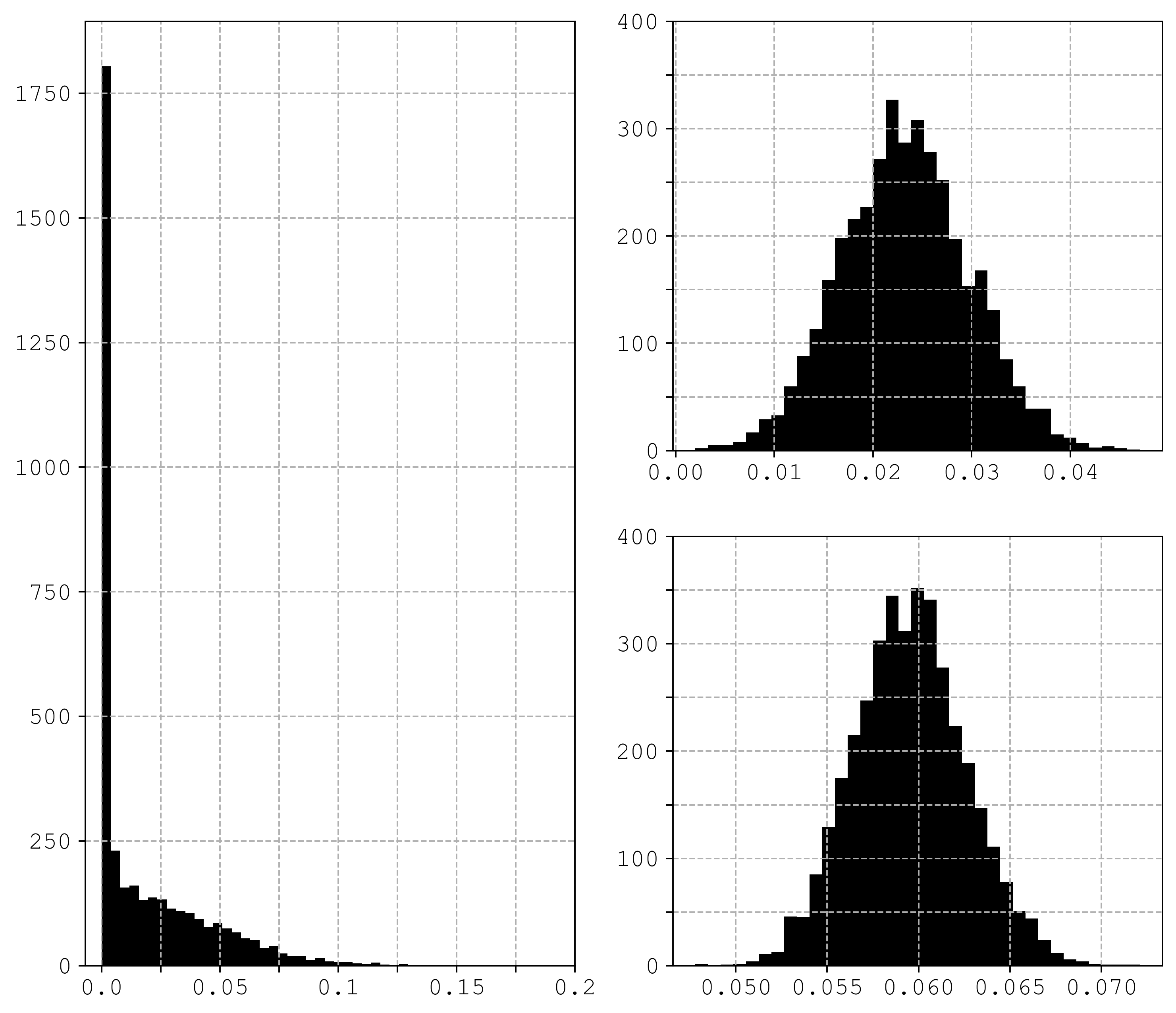}\hfill
\end{center}
\caption{Histograms for the estimators of $\alpha$ for each method, obtained for $N=4000$ simulations of $n=500$ trajectories of a Poisson process with intensity $5$Hz over a timeframe of $T=10$s. Left: our method (summing the $n$ trajectories). Top-right: maximising the sum of the $n$ log-likelihoods. Bottom-right: averaging the \textsc{mle}s over the $n$ trajectories. }
\label{fig:comparison}
\end{figure}

When applied to our setting, the two conventional approaches produce positive biases, which is especially detrimental to our main objective of testing for self-excitation. Consequently, we do not investigate further their properties at small time scales. Though the present example tells nothing of large scale properties, it is however interesting to observe our method produces the largest variance and might become a subpar solution as $T$ increases and standard estimators converge again. Note also that in the case of the average of \textsc{mle}s strategy, the bias is at least partly due to the positivity restrictions imposed upon the weighted adjacency $\alpha$, and could therefore disappear entirely in a more flexible model \textit{à la} Bonnet \textit{et al.}~\cite{Martinez} where $\alpha$ can reach negative values. 

\section{Preparation for the proofs}\label{section:preparation_for_proofs}

Throughout the proofs, $\theta_0$ is a parameter in $\Theta_0$, and $(\boldsymbol{N}^{(n)}_t)$ a point process with intensity $(\lambda^{(n)}_t(\theta_0))$. Unless specified otherwise,  any expectation $\mathbb{E}$ is taken under $\Prob(\theta_0)$. We will be concerned with the asymptotic behaviour of a variety of derivatives of the log-likelihood, which we introduce hereafter. Define the score $\boldsymbol{\mathcal{S}}(n,\cdot)
    =
    \big( \mathcal{S}(n,\cdot)_{i} \big) = \big( \partial_{\theta_i} \mathcal{L}(n,\theta) \big)$ where for any $\theta \in \Theta$ and $i \in \segN{1,d}$,
\begin{equation}\label{equ:definition_of_score}
 \mathcal{S}(n,\theta)_{i}
    =
    \sum_{k=1}^K
    \int_0^T
        \frac{\partial_{\theta_i} \lambda^{(n)}_{k,s} (\theta)}
        {\lambda^{(n)}_{k,s} (\theta)}
    \dif N^{(n)}_k
    -
    \sum_{k=1}^K
    \int_0^T
        \partial_{\theta_i}\lambda^{(n)}_{k,s} (\theta)
    \dif s.
\end{equation}

Similarly, $\partial_\theta \boldsymbol{\mathcal{S}}(n,\cdot)
    =
    (\partial_\theta \mathcal{S}(n,\cdot)_{ij})$, where for any $i,j \in \segN{1,d}$ 
\begin{align}\label{equ:score_derivative}
 \partial_\theta \mathcal{S}(n,\theta)_{ij}
    &=
    \sum_{k=1}^K
    \int_0^T
        \frac{\partial_{\theta_i} \partial_{\theta_j} \lambda^{(n)}_{k,s} (\theta)}
        {\lambda^{(n)}_{k,s} (\theta)}
        -
        \frac{\partial_{\theta_i} \lambda^{(n)}_{k,s} (\theta) \partial_{\theta_j} \lambda^{(n)}_{k,s} (\theta)}
        {(\lambda^{(n)}_{k,s} (\theta))^2}
    \dif N^{(n)}_k
    \\
    &-
    \sum_{k=1}^K
    \int_0^T \partial_{\theta_i}\partial_{\theta_j}\lambda^{(n)}_{k,s} (\theta)
    \dif s.
    \nonumber
\end{align}

Finally, define the empirical information $\boldsymbol{\mathcal{I}}(n,\cdot)= ( \mathcal{I}_{ij}(n,\cdot))_{ij}$ as
\begin{equation}\label{equ:empirical_information}
    \boldsymbol{\mathcal{I}}_{ij}(n,\theta)
    =
    \sum_{k=1}^K
    \int_0^T 
    \frac{
        \partial_{\theta_i}\lambda^{(n)}_{k,s} (\theta) 
        }
        { \lambda^{(n)}_{k,s} (\theta)}
    \frac{
        \partial_{\theta_j}\lambda^{(n)}_{k,s}(\theta)
        }
        { \lambda^{(n)}_{k,s} (\theta)}
        \dif N^{(n)}_{k,s}.
\end{equation}

  We first recall some well-known results, which make a recurrent appearance in our proofs.  
\begin{lemma}\label{lemma:pointwise_to_uniform}
   Let $( X_n(\nu) )_{n,\nu}$  be a family of random variables with values in some normed space $(E,\lVert \cdot \rVert)$, indexed by $n \in \mathbb{N}$ and $\nu$ in some compact $A\subset \R^d$. If for any   $\nu \in A$,  
    \begin{equation}\label{equ:domination_to_extend}
        X_n(\nu) = O_{\Prob}(1)
        \hspace{0.5cm}
        \big(
        \text{respectively: }
        o_\Prob(1)
        \big),
    \end{equation}

    and $(X_n(\cdot))$ is stochastically equicontinuous: for any $\eta>0$ and $\varepsilon>0$, there is  $\delta > 0$ such that
    \begin{equation}\label{equ:a_stochastic_continuity_condition_for_domination}
    \limsup_{n \to \infty}
        \Prob 
        \big[ 
        \sup_{ \lVert\nu_1 - \nu_2\lVert < \delta}
        \lVert
        X(n,\nu_1)
        -
        X(n,\nu_2)
        \lVert 
        > \eta \big]
        \leq \varepsilon,
    \end{equation}
    
    then~\eqref{equ:domination_to_extend} holds uniformly over  any compact $\Gamma \subset E$, that is 
    \begin{equation*}
        \sup_{\nu \in \Gamma} \lVert X_n(\nu) \lVert = O_\Prob(1)
        \hspace{0.5cm}
        \big(
        \text{respectively: }
        o_\Prob(1)
        \big).
    \end{equation*}
\end{lemma}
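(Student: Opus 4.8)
The plan is to prove that pointwise stochastic boundedness (or negligibility) plus stochastic equicontinuity upgrades to uniform control over a compact set, using a standard covering argument. First I would fix $\eta>0$ and $\varepsilon>0$ and invoke the equicontinuity hypothesis~\eqref{equ:a_stochastic_continuity_condition_for_domination} to obtain a $\delta>0$ such that, for all large $n$,
\begin{equation*}
    \Prob\Big[ \sup_{\lVert \nu_1 - \nu_2 \lVert < \delta} \lVert X_n(\nu_1) - X_n(\nu_2) \lVert > \eta \Big] \leq \varepsilon.
\end{equation*}
Since $\Gamma$ is compact, I would cover it by finitely many balls $B(\nu_1,\delta), \dots, B(\nu_m,\delta)$ centered at points $\nu_1,\dots,\nu_m \in \Gamma$. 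The point of the covering is that any $\nu \in \Gamma$ lies within $\delta$ of some center $\nu_j$, so that
\begin{equation*}
    \lVert X_n(\nu) \lVert \leq \lVert X_n(\nu_j) \lVert + \sup_{\lVert \nu - \nu_j \lVert < \delta} \lVert X_n(\nu) - X_n(\nu_j) \lVert \leq \max_{1 \leq j \leq m} \lVert X_n(\nu_j) \lVert + \sup_{\lVert \nu_1 - \nu_2 \lVert < \delta} \lVert X_n(\nu_1) - X_n(\nu_2) \lVert.
\end{equation*}
Taking the supremum over $\nu \in \Gamma$ on the left bounds $\sup_{\nu \in \Gamma} \lVert X_n(\nu) \lVert$ by the sum of a maximum over finitely many fixed indices and the equicontinuity modulus.

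For the $O_\Prob(1)$ case, I would handle the two terms separately. The finite maximum $\max_{1 \leq j \leq m} \lVert X_n(\nu_j) \lVert$ is $O_\Prob(1)$ because each $\lVert X_n(\nu_j) \lVert$ is $O_\Prob(1)$ by~\eqref{equ:domination_to_extend} and a finite maximum of tight sequences is tight: given $\varepsilon$, choose $M_j$ with $\limsup_n \Prob[\lVert X_n(\nu_j)\lVert > M_j] \leq \varepsilon/m$ and set $M = \max_j M_j$, so that a union bound gives $\limsup_n \Prob[\max_j \lVert X_n(\nu_j) \lVert > M] \leq \varepsilon$. The second term is controlled by the equicontinuity event above, which has probability at most $\varepsilon$ for large $n$. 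Combining, I would get that for every $\varepsilon$ there is a constant $M + \eta$ and an index $N$ such that $\Prob[\sup_{\nu \in \Gamma} \lVert X_n(\nu) \lVert > M + \eta] \leq 2\varepsilon$ for $n \geq N$, which is precisely tightness of the supremum, i.e. $\sup_{\nu \in \Gamma} \lVert X_n(\nu) \lVert = O_\Prob(1)$.

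For the $o_\Prob(1)$ case the argument is essentially identical but cleaner: I would fix $\eta > 0$, and since each $\lVert X_n(\nu_j) \lVert \to 0$ in probability, the finite maximum also tends to $0$ in probability, so $\limsup_n \Prob[\max_j \lVert X_n(\nu_j) \lVert > \eta] = 0$; adding the equicontinuity term bounded by $\varepsilon$ yields $\limsup_n \Prob[\sup_{\nu \in \Gamma} \lVert X_n(\nu) \lVert > 2\eta] \leq \varepsilon$, and letting $\varepsilon \to 0$ gives convergence to zero in probability. I do not anticipate a genuine obstacle here, as this is a classical compactness-plus-equicontinuity upgrade; the only point requiring mild care is bookkeeping the $\limsup$ in $n$ coming from~\eqref{equ:a_stochastic_continuity_condition_for_domination} together with the pointwise tightness, and ensuring the covering is chosen \emph{after} $\delta$ is fixed so that the ball radius matches the modulus of equicontinuity.
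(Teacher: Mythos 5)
Your proposal is correct and follows essentially the same route as the paper: a finite $\delta$-covering of the compact set, a triangle-inequality split of $\lVert X_n(\nu)\rVert$ into the value at the nearest ball center plus the equicontinuity modulus, a union bound over the finitely many centers for the pointwise $O_\Prob(1)$ (resp.\ $o_\Prob(1)$) term, and the equicontinuity hypothesis to kill the remainder. The only difference is cosmetic bookkeeping (you split the random variable additively before passing to probabilities, whereas the paper splits the probability event directly with thresholds $\nicefrac{M}{2}$), so there is nothing of substance to add.
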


In the sequel, the conditions for the supremum of random variables to remain measurable will always be met, see appendix \textsc{c} of Pollard~\cite{Pollard} for more details. Lemma~\ref{lemma:pointwise_to_uniform} is commonly found in the context of parametric inference, and a similar argument appears in concise form in Chen \& Hall~\cite{FengInferenceForNonStationarySEPP}. We recall its proof for the sake of completeness.

\begin{proof}[Proof of Lemma~\ref{lemma:pointwise_to_uniform}]
       Let $(X_n(\nu))$ satisfy the conditions of Lemma~\ref{lemma:pointwise_to_uniform}, $\Gamma$ be a compact set, and let $\varepsilon>0$. For any $\delta >0$, we have a finite covering of $\Gamma$ with open sets included in balls with centres $(c_i)$, $i \in \segN{1,P_\delta}$, and radius strictly below $\delta$. For every $\nu \in \Gamma$, write $c(\nu)$ for the ball center closest to $\nu$.
      \begin{align}
          \Prob\big[ \sup_{ \nu \in \Gamma} \lVert 
          X_n(\nu) \lVert& 
          > M\big] \nonumber
    \\
    &\leq 
      \Prob\big[ 
        \sup_{ \nu \in \Gamma} \lVert 
          X_n(\nu) - X_n( c(\nu)) \lVert 
          >  \nicefrac{M}{2}
      \big]
      +
      \Prob\big[ \sup_{\nu \in \Gamma}
      \lVert
      X_n(c(\nu))
      \lVert > \nicefrac{M}{2}
      \big].
      \label{equ:equibound}
      \end{align}
      Now, under condition~\eqref{equ:a_stochastic_continuity_condition_for_domination}, for any sufficiently small $\delta$, the first term on the right hand side of~\eqref{equ:equibound} vanishes as $n \to \infty$. The second term is bounded by
      \begin{equation*}
          \sum_{i=1}^{P_\delta}
          \Prob \big(
      \lVert
      X_n(c_i)
      \lVert > M
      \big).
      \end{equation*}
      When $X_n(\cdot)$ is pointwise $ O_{\Prob}(1)$ over $\Gamma$ (respectively:  pointwise $o_{\Prob}(1)$), this bound may be made arbitrarily small by choosing $M$ sufficiently large  (respectively: uniformly in $M$), yielding the desired results.
\end{proof}

Stochastic equi-continuity results will most often proceed directly from the regularity of the kernel $\varphi$. In some other instances, we have recourse to a chaining Lemma. 

\begin{lemma}[Chaining, Pollard~\cite{Pollard}, Section VII.2, p. 142-146]\label{lemma:chaining}
    Let $(Z_x)$ be a stochastic process indexed by $x$ in a compact metric space $(E,d)$ with values in some normed space $(F,\lvert \cdot \lvert)$. If there exists a strictly positive constant $D$ such that for any $x,y \in F$, and any $\eta>0$,
    \begin{equation}\label{equ:chaining_decay_condition}
        \Prob\big[ 
            \lvert Z(x) - Z(y) \rvert  \geq \eta d(x,y)
        \big]
        \leq 
        2 \exp
        \Big[
            -\frac{1}{2} \frac{\eta^2}{D^2}
        \Big],
    \end{equation}

    then for any $0 < \varepsilon <1$,
    
\begin{equation}\label{equ:chaining_decay_result}
        \Prob\big[ 
        \text{There is some } x,y  \in  E\text{ such that} \hspace{0.2cm}
            \lvert Z(x) - Z(y) \rvert  
            \geq 26 D J(d(x,y))
            \hspace{0.2cm}
            \text{and } 
            \hspace{0.2cm}
         d(x,y)<\varepsilon 
        \big]
        \leq 
        2 \varepsilon,\\
    \end{equation}
    
    where $J$ is the covering integral
    \begin{equation*}
        J(\delta) 
        =
        \int_0^\delta 
        \Big(
        2 \ln \Big[ \frac{N(u)^2}{u}\Big]
        \Big)^{\frac{1}{2}}
        \dif u,
    \end{equation*}

    in which $N(u)$ is the covering number, that is the size of the smallest $u$-net on $E$ for the metric $d$. 
\end{lemma}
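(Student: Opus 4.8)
The plan is to carry out Pollard's chaining construction: approximate each point of $E$ by a hierarchy of finite nets at dyadic scales and control the increment $Z(x)-Z(y)$ by summing increments along the two chains that join $x$ and $y$ to a common coarse approximation.

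First I would fix the scales $\delta_j = 2^{-j}$ and, for each $j$, a minimal $\delta_j$-net $E_j$ of cardinality $N(\delta_j)$, together with a map $\pi_j\colon E \to E_j$ assigning to each point a nearest net element, so that $d(x,\pi_j(x)) \le \delta_j$ and hence $d(\pi_{j}(x),\pi_{j+1}(x)) \le 2\delta_j$. For a pair $x,y$ with $\delta := d(x,y) < \varepsilon$, picking $m$ with $\delta_{m} \le \delta < \delta_{m-1}$ and telescoping along the finer scales gives
\[
Z(x) - Z(y) = \big(Z(\pi_m(x)) - Z(\pi_m(y))\big) + \sum_{j \ge m}\!\big(Z(\pi_{j+1}(x)) - Z(\pi_j(x))\big) - \sum_{j \ge m}\!\big(Z(\pi_{j+1}(y)) - Z(\pi_j(y))\big),
\]
where the identity $Z(\pi_j(x)) \to Z(x)$ as $j \to \infty$ rests on the a.s.\ continuity granted by a finite covering integral (the separable-version/measurability caveat being exactly the one flagged after the statement). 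The coarse link obeys $d(\pi_m(x),\pi_m(y)) \le \delta + 2\delta_m \le 4\delta_m$.

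Next I would apply the tail hypothesis~\eqref{equ:chaining_decay_condition} link by link with a level $\eta_j = D\big(2\ln[N(\delta_{j+1})^2/\delta_{j+1}]\big)^{1/2}$ tuned to balance the union bound. At scale $j$ there are at most $N(\delta_j)N(\delta_{j+1}) \le N(\delta_{j+1})^2$ distinct net-point links, so $N(\delta_{j+1})^2 \cdot 2\exp[-\eta_j^2/(2D^2)] \le 2\delta_{j+1}$; summing this geometric series over the levels $j$ with $\delta_j < \varepsilon$ keeps the total probability of any link failing below $2\varepsilon$. The decisive point is that this union is taken over the finitely many net-point pairs at each level, rather than over the uncountably many $(x,y)$, so a single good event simultaneously controls every admissible pair and yields the ``there is some $x,y$'' formulation of~\eqref{equ:chaining_decay_result}.

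Finally, on the complement of the failure events each link satisfies $|Z(\pi_{j+1}(x)) - Z(\pi_j(x))| < 2\delta_j\,\eta_j$, and summing over $j \ge m$ for both chains together with the coarse link bounds $|Z(x)-Z(y)|$ by a fixed multiple of $D\sum_{j \ge m}\delta_j\big(2\ln[N(\delta_{j+1})^2/\delta_{j+1}]\big)^{1/2}$. Because the scales are dyadic, this is a Riemann-type over-estimate of $\int_0^{\delta}\big(2\ln[N(u)^2/u]\big)^{1/2}\dif u = J(\delta)$, and collecting the accumulated factors delivers the constant $26$. I expect the main obstacle to be precisely this constant bookkeeping: one must check that the choice of $\eta_j$ simultaneously meets the $2\varepsilon$ probability budget and feeds exactly the integrand of $J$ into the sum, and that the dyadic sum overshoots $J(\delta)$ only by a bounded factor uniformly in $\delta < \varepsilon$, so that the single constant $26$ works for all admissible pairs at once.
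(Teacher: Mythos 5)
The paper does not prove this lemma at all: it is imported verbatim from Pollard (Section VII.2), and the only commentary the paper adds is the remark that $J$ may be replaced by any finite upper bound on the covering integral. So there is no in-paper proof to compare against; what you have written is, in outline, Pollard's own argument, and it is the right one. Your scale hierarchy, the telescoping chain decomposition, the choice $\eta_j = D\bigl(2\ln[N(\delta_{j+1})^2/\delta_{j+1}]\bigr)^{1/2}$ calibrated so that the union bound at level $j$ costs $2\delta_{j+1}$ and the geometric series stays under $2\varepsilon$, and the reading of $\sum_j \delta_j \eta_j$ as a Riemann over-estimate of $J(\delta)$ are all exactly the mechanics of the cited proof.

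Two points where the sketch stops short of a complete argument. First, the union bound as you state it counts only the within-chain links $(\pi_j(x),\pi_{j+1}(x))$; the coarse links $(\pi_m(x),\pi_m(y))$, of which there are up to $N(\delta_m)^2$ at each level $m$ and whose separation is $4\delta_m$ rather than $2\delta_j$, must also be charged to the same $2\varepsilon$ budget, and this is one of the places the factor $26$ earns its keep. Second, the convergence $Z(\pi_j(x))\to Z(x)$ is not free: in Pollard's formulation the result is first proved for the countable set $\bigcup_j E_j$, where every chain is finite, and only then extended to all of $E$ via the sample continuity that the finite covering integral delivers; invoking that continuity directly inside the decomposition, as you do, is slightly circular unless you order the steps this way. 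Neither issue is a wrong turn — both are resolved by the standard bookkeeping you explicitly defer — but a referee would want the constant chased once to confirm that $26$ (and not some larger number) survives the inclusion of the coarse links.
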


Observe as in Pollard~\cite{Pollard} that, in practice, one may replace $J(\cdot)$ in~\eqref{equ:chaining_decay_result} by any finite upper bound on the covering integral, and the exponential bound in~\eqref{equ:chaining_decay_condition}  by any bound decreasing in $\eta$ at the same speed. The chaining lemma requires concentration inequalities, which will arise from Lemma~\ref{lemma:concentration} below.

\begin{lemma}[Lemma 2.1 of Van de Geer~\cite{vandeGeerconcentration}]\label{lemma:concentration}
    Let $(M_t)$ be a martingale over $[0,T]$ with predictable variation $V_t= \langle M, M \rangle_t$ and such that $\lvert \Delta M_t \lvert = \lvert M_t - M_{t^-} \lvert< \kappa$ for all $t \in [0,T]$. Then, for any $a> 0$ and $b \in \R$,

    \begin{equation*}
        \Prob \big[ \text{There is some } t \in [0,T] \hspace{0.2cm} \text{such that} \hspace{0.2cm} \lvert M_t \lvert >  a  \hspace{0.2cm} \text{ and } \hspace{0.2cm} V_t \leq b^2 \big]
        \leq
        2 \exp \Big[ - \frac{1}{2} \frac{a^2}{a\kappa + b^2} \Big].
    \end{equation*}
\end{lemma}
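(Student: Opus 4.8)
The plan for Lemma~\ref{lemma:concentration} is to establish a Bernstein-type bound via a one-parameter family of exponential supermartingales, following the classical recipe for martingales with bounded jumps. First I would reduce to a one-sided estimate: since $(-M_t)$ is again a martingale with the same predictable variation $V$ and with jumps bounded in absolute value by $\kappa$, it suffices to bound $\Prob[\exists\, t \le T: M_t > a,\ V_t \le b^2]$ and then apply the identical estimate to $-M$, the factor $2$ arising from a union bound.

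The core of the argument is the construction, for each $\gamma > 0$, of the process
\begin{equation*}
Z_t^\gamma = \exp\big(\gamma M_t - \psi_\kappa(\gamma) V_t\big), \qquad \psi_\kappa(\gamma) = \frac{e^{\gamma\kappa} - 1 - \gamma\kappa}{\kappa^2},
\end{equation*}
which I would show to be a supermartingale. The one elementary ingredient is the inequality $e^{\gamma x} - 1 - \gamma x \le \psi_\kappa(\gamma)\,x^2$, valid for every $x \le \kappa$; it follows from the representation $\frac{e^u - 1 - u}{u^2} = \int_0^1 (1-s)e^{su}\,ds$, which is non-decreasing in $u$, applied with $u = \gamma x \le \gamma \kappa$. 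Expanding $e^{\gamma M_t}$ along its jumps, its drift is then dominated by $\psi_\kappa(\gamma)\int_0^t e^{\gamma M_{s^-}}\,d\langle M,M\rangle_s$, using that $\langle M,M\rangle$ is the compensator of $[M,M]=\sum_{s}(\Delta M_s)^2$ and that $e^{\gamma M_{s^-}}$ is predictable. Since $V = \langle M,M\rangle$ is continuous, an integration by parts shows that the multiplicative correction $e^{-\psi_\kappa(\gamma)V_t}$ cancels exactly this drift, leaving a non-negative local supermartingale, which is a genuine supermartingale by Fatou's lemma as $\mathbb{E}[Z_0^\gamma] = 1$.

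Next I would apply Ville's maximal inequality: for the non-negative supermartingale $(Z_t^\gamma)$ with $Z_0^\gamma = 1$ one has $\Prob[\sup_t Z_t^\gamma \ge \lambda] \le 1/\lambda$. On the event $\{\exists\, t: M_t > a,\ V_t \le b^2\}$ the value $Z_t^\gamma \ge \exp(\gamma a - \psi_\kappa(\gamma)b^2)$ is attained at the relevant time, whence
\begin{equation*}
\Prob\big[\exists\, t: M_t > a,\ V_t \le b^2\big] \le \exp\big(-\gamma a + \psi_\kappa(\gamma) b^2\big).
\end{equation*}
Optimising the exponent over $\gamma > 0$ with the choice $\gamma = \kappa^{-1}\ln(1 + a\kappa/b^2)$ recasts the right-hand side as the Bennett estimate $\exp\big(-b^2\kappa^{-2} h(a\kappa/b^2)\big)$ with $h(u) = (1+u)\ln(1+u) - u$, and the elementary inequality $h(u) \ge u^2/(2(1+u))$ produces exactly the exponent $-a^2/\big(2(a\kappa + b^2)\big)$. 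Combined with the one-sided reduction this yields the claim.

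The step I expect to be the main obstacle is the rigorous proof that $(Z_t^\gamma)$ is a supermartingale: it requires carefully passing from the optional quadratic variation $[M,M]$ to the predictable variation $\langle M,M\rangle$ through compensation, and a localisation argument (via stopping times and Fatou) to upgrade the local supermartingale to a true one. The final optimisation, by contrast, is routine once the Bennett function is identified.
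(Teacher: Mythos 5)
The paper does not prove this lemma: it is imported verbatim as Lemma 2.1 of van de Geer~\cite{vandeGeerconcentration}, so there is no internal proof to compare against. Your blind derivation is nonetheless correct and follows the same classical route as the cited source: the exponential supermartingale $\exp\big(\gamma M_t - \psi_\kappa(\gamma)\langle M,M\rangle_t\big)$ with $\psi_\kappa(\gamma)=(e^{\gamma\kappa}-1-\gamma\kappa)/\kappa^2$, Ville's maximal inequality on the event $\{M_t>a,\ V_t\le b^2\}$, the Bennett optimisation $\gamma=\kappa^{-1}\ln(1+a\kappa/b^2)$, and the elementary bound $h(u)\ge u^2/(2(1+u))$ which reproduces the stated exponent exactly; the two-sided statement then follows from applying the one-sided bound to $\pm M$. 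Two small caveats in your write-up deserve a flag, although neither affects the application in this paper. First, your identity $[M,M]=\sum_s(\Delta M_s)^2$ and the ensuing drift domination are valid only for purely discontinuous martingales; for a general martingale one must also handle the continuous martingale part (whose contribution $\tfrac{1}{2}\gamma^2\langle M^c,M^c\rangle_t$ is in fact dominated by $\psi_\kappa(\gamma)\langle M^c,M^c\rangle_t$ since $\psi_\kappa(\gamma)\ge \gamma^2/2$, so the argument survives, but you should say so). Second, the continuity of $\langle M,M\rangle$ is not automatic for a general martingale with bounded jumps; it holds under quasi-left-continuity, which is satisfied here because the martingales to which the lemma is applied are integrals against compensated marked point-process measures with absolutely continuous compensators.
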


Some more specific devices now come into play. The definition of the intensity~\ref{def:marked_hawkes_process} gives  rise to a variety of Volterra integral equations. We recall some elementary results regarding their resolvents. 
\begin{lemma}[On Volterra equations]\label{lemma:on_volterra}

Let $\kappa \colon \R^+ \mapsto \mathcal{M}_K(\R^+)$ be a locally bounded function such that
\begin{equation*}
    \rho \Big( \int_0^\infty \kappa(s) \dif s \Big)  < 1,
\end{equation*}

and $m$ a continuous function from $\R^+$ to $\R^K$. Define $\mathcal{K} = \sum_{k=1}^\infty \kappa^{\star k}$, wherein the  $\star k$-superscript denotes the $k$-fold convolution. Then, there exists a unique solution in locally bounded functions from $\R^+$ to $\R^K$ to the integral equation in $g$
\begin{equation}\label{equ:volterra_inequation}
    g(t)  =  m(t) + \int_0^t \kappa(t-s) g(s) \dif s, \hspace{0.2cm} t \in \R^+, 
\end{equation}
given by
\begin{equation*}
    g \colon  t \mapsto m(t) + \int_0^t \mathcal{K}(t-s) m(s) \dif s.
\end{equation*}
\end{lemma}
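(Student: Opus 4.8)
The plan is to treat this as a classical resolvent computation for a Volterra equation of the second kind, exploiting throughout that $\kappa$ takes values in $\mathcal{M}_K(\R^+)$: since every coefficient is nonnegative, Tonelli's theorem and monotone convergence apply without integrability caveats, which disarms all the sum/integral interchanges. I would fix once and for all the entrywise $\ell^1$ matrix norm $\lVert M \lVert = \sum_{ij} \lvert M_{ij} \rvert$, which is submultiplicative and monotone on nonnegative matrices and compatible with the vector norm, so that scalar total-mass bounds transfer directly to the convolution powers.

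First I would show that $\mathcal{K} = \sum_{k \geq 1} \kappa^{\star k}$ is a well-defined, locally bounded function. Writing $A = \int_0^\infty \kappa(s)\dif s$, nonnegativity and Tonelli give the coefficient-wise identity $\int_0^\infty \kappa^{\star k}(s)\dif s = A^k$. Since $\rho(A) < 1$, Gelfand's formula furnishes $r \in (\rho(A),1)$ and $C>0$ with $\lVert A^k \lVert \leq C r^k$, so $\sum_k \int_0^\infty \kappa^{\star k}$ converges and $\mathcal{K} \in L^1(\R^+;\mathcal{M}_K(\R^+))$. The series obeys the resolvent identity $\mathcal{K} = \kappa + \kappa \star \mathcal{K}$, obtained by pulling $\kappa\star$ through the sum (monotone convergence) and re-indexing. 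Local boundedness of $\mathcal{K}$ then follows from this identity: $\kappa$ is locally bounded by hypothesis and $\kappa \star \mathcal{K}$ is the convolution of a locally bounded function with an $L^1$ function, hence locally bounded, since on $[0,T]$ one has $\lVert (\kappa\star\mathcal{K})(t) \lVert \leq (\sup_{[0,T]}\lVert \kappa \lVert)\int_0^T \lVert \mathcal{K}(s)\lVert\dif s < \infty$.

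Existence is then a direct substitution. Setting $g = m + \mathcal{K} \star m$, with $(\mathcal{K}\star m)(t) = \int_0^t \mathcal{K}(t-s) m(s)\dif s$, this $g$ is locally bounded because $m$ is continuous and $\mathcal{K} \in L^1$. Inserting it into the right-hand side of~\eqref{equ:volterra_inequation} and using associativity of convolution yields $m + \kappa\star m + (\kappa \star \mathcal{K})\star m = m + (\kappa + \kappa \star \mathcal{K})\star m = m + \mathcal{K}\star m = g$, the last equality being the resolvent identity. For uniqueness, the difference $d = g_1 - g_2$ of two locally bounded solutions satisfies the homogeneous equation $d = \kappa \star d$, hence $d = \kappa^{\star k} \star d$ for every $k$ by iteration. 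Fixing $T>0$, this gives $\sup_{t \in [0,T]} \lVert d(t) \lVert \leq \big(\sup_{t\in[0,T]}\lVert d(t)\lVert\big)\int_0^T \lVert \kappa^{\star k}(s)\lVert \dif s$, and $\int_0^T \lVert \kappa^{\star k}\lVert \leq \lVert A^k \lVert \to 0$ by the monotonicity of the chosen norm; since $\sup_{[0,T]}\lVert d \lVert$ is finite, this forces $d \equiv 0$ on $[0,T]$, and $T$ being arbitrary, $d \equiv 0$ on $\R^+$. The only real obstacle is bookkeeping: checking that the norm is monotone on $\mathcal{M}_K(\R^+)$ so the scalar mass bounds carry over, and justifying the repeated interchanges — both neutralised by the nonnegativity of $\kappa$.
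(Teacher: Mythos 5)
Your proof is correct. The paper gives no proof of its own for this lemma — it simply cites Bacry \textit{et al.} and Gripenberg — and your argument is exactly the classical Neumann-series/resolvent computation those references contain (Tonelli plus $\rho(A)<1$ to sum the convolution powers, the resolvent identity $\mathcal{K}=\kappa+\kappa\star\mathcal{K}$ for existence, and iteration of the homogeneous equation for uniqueness), so nothing further is needed.
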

 See for instance Bacry \textit{et al.}~\cite{bacrylimit}[Lemma 3 p. 16] or Gripenberg~\cite{gripenberg}[Theorem 3.5 p. 44] for a proof. Lemma~\ref{lemma:normalized_intensity} is then an immediate consequence of Lemma~\ref{lemma:on_volterra}. Before getting to its short proof, we extend its assertion into the more general form of Corollary~\ref{coro:mean_intensity} which allows for values of $\theta$ different to the true parameter.

  \begin{corollary}[An extension of Lemma~\ref{lemma:normalized_intensity}]\label{coro:mean_intensity}
      Under assumption~\ref{ass: stability}, for any $t \in [0,T]$, $n \in \mathbb{N}^\star$ and $\theta \in \Theta$,
      \begin{equation*}
          \mathbb{E}\big[ 
          \frac{1}{n}
          \lambda^{(n)}_t(\theta) 
          \big]
          =
          \mu(t,\theta)
          +
          \int_0^t \langle F,  \varphi\rangle (t-s,\theta) h(s,\theta_0) \dif s,
      \end{equation*}
      where $s \mapsto h(s,\theta_0)$ is the unique solution in $\chi$ to the Volterra equation $ \chi = \mu(\cdot,\theta_0) + \langle F, \varphi \rangle (\cdot, \theta_0) \star \chi$. 
  \end{corollary}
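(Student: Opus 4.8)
The plan is to compute the expectation coordinate by coordinate directly from the parametric intensity~\eqref{equ:parameterised_intensity}, using the martingale structure of the compensated measure to eliminate the stochastic integral. Recall that under $\Prob(\theta_0)$ the counting measure $\boldsymbol{N}^{(n)}$ admits $\lambda^{(n)}_{s}(\theta_0)\dif s\, F(\dif x)$ as compensator. Fixing a coordinate $k$, I would first split
\[
  \frac{1}{n}\mathbb{E}\big[\lambda^{(n)}_{k,t}(\theta)\big]
  =
  \mu_k\Big(\tfrac{t}{T},\theta\Big)
  +
  \frac{1}{n}\sum_{l=1}^K \mathbb{E}\Big[\int_0^t\!\!\int_{\R^m}\varphi_{kl}(t-s,x,\theta)\,N^{(n)}_l(\dif s,\dif x)\Big].
\]
For a fixed $t$, the integrand $(s,x)\mapsto\varphi_{kl}(t-s,x,\theta)$ is deterministic, hence predictable, so I would replace $N^{(n)}_l$ by its compensated martingale measure $M^{(n)}_l$ plus the compensator $\lambda^{(n)}_{l,s}(\theta_0)\dif s\, F_l(\dif x)$.

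The only genuinely delicate point is that the $M^{(n)}_l$-part drops out. For fixed $t$, the process $u\mapsto\int_0^u\!\int_{\R^m}\varphi_{kl}(t-s,x,\theta)M^{(n)}_l(\dif s,\dif x)$ on $[0,t]$ is a martingale (a predictable integrand against a martingale measure), so its terminal value is centred provided integrability holds. I would obtain the latter by bounding $\mathbb{E}\int_0^t\langle F,\varphi\rangle_{kl}(t-s,\theta)\lambda^{(n)}_{l,s}(\theta_0)\dif s$ using the integrability of $\varphi$ and the uniform finiteness of $s\mapsto\mathbb{E}[\lambda^{(n)}_{l,s}(\theta_0)]$ on the compact $[0,T]$, which Assumption~\ref{ass: stability} guarantees via Lemma~\ref{lemma:normalized_intensity}; a routine localisation then confirms the martingale part is centred.

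Once the martingale term is discarded, Fubini and the notation $\langle F,\varphi\rangle_{kl}(t-s,\theta)=\int_{\R^m}\varphi_{kl}(t-s,x,\theta)F_l(\dif x)$ yield
\[
  \frac{1}{n}\mathbb{E}\big[\lambda^{(n)}_{k,t}(\theta)\big]
  =
  \mu_k\Big(\tfrac{t}{T},\theta\Big)
  +
  \sum_{l=1}^K\int_0^t \langle F,\varphi\rangle_{kl}(t-s,\theta)\,\frac{1}{n}\mathbb{E}\big[\lambda^{(n)}_{l,s}(\theta_0)\big]\dif s.
\]
Substituting $\frac{1}{n}\mathbb{E}[\lambda^{(n)}_{l,s}(\theta_0)]=h_l(s,\theta_0)$ from Lemma~\ref{lemma:normalized_intensity} --- the single place where the true parameter enters --- gives the asserted vector identity. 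The characterisation of $h(\cdot,\theta_0)$ as the unique solution of $\chi=\mu(\cdot,\theta_0)+\langle F,\varphi\rangle(\cdot,\theta_0)\star\chi$ is then exactly Lemma~\ref{lemma:on_volterra} with $\kappa=\langle F,\varphi\rangle(\cdot,\theta_0)$ and $m=\mu(\cdot,\theta_0)$, whose hypothesis $\rho(\int_0^\infty\langle F,\varphi\rangle(s,\theta_0)\dif s)<1$ is Assumption~\ref{ass: stability} and whose resolvent $\mathcal{K}=\sum_{k\geq1}\langle F,\varphi\rangle^{\star k}$ coincides with $\langle F,\Psi\rangle$. The whole argument is routine; I expect the interchange of expectation and stochastic integration --- i.e. the integrability bound securing that the $M^{(n)}_l$-part is centred --- to be the only, and rather mild, obstacle.
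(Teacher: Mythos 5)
Your proposal is correct and follows essentially the same route as the paper: both compute $\mathbb{E}[\tfrac{1}{n}\lambda^{(n)}_t(\theta)]$ by exploiting the martingale property of the integral of $\varphi(t-\cdot,\cdot,\theta)$ against the compensated measure $\boldsymbol{M}^{(n)}$, arrive at the Volterra-type identity in the mean intensity, and then invoke Lemma~\ref{lemma:on_volterra} (via Lemma~\ref{lemma:normalized_intensity}) to identify $\tfrac{1}{n}\mathbb{E}[\lambda^{(n)}_s(\theta_0)]$ with $h(s,\theta_0)$. The only difference is cosmetic --- you spell out the integrability and localisation step that the paper leaves implicit.
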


  \begin{proof}
      Let $t \in [0,T]$ and $\theta \in \Theta$. Under assumptions~\ref{ass:g_continuity} and~\ref{ass: stability}, The process
      \begin{equation*}
          \Big( \int_{[0,s)\times \R^m} \mathbb{1}_{u < t} \varphi(t-u,x,\theta_0) \boldsymbol{M}^{(n)}(\dif u, \dif x)
          \Big),
          \hspace{0.2cm}
          s \in [0,T],
      \end{equation*}
      is a  martingale. In particular, at $s=t$,
      \begin{equation}\label{equ:smoothing}
      \mathbb{E} \Big[ 
          \int_{[0,t) \times \R^m}   \varphi  (t-s,x,\theta) 
          \boldsymbol{N}^{(n)}(\dif s, \dif x)
          \Big]
          =
          \int_0^t \langle F,  \varphi \rangle (t-s,\theta)
          \mathbb{E} \Big[  
          \frac{1}{n} \lambda^{(n)}_s(\theta_0)
          \Big]
          \dif s.
      \end{equation}
    Applying~\eqref{equ:smoothing} to the intensity~\eqref{equ:intensity}, one gets
    \begin{equation}\label{equ:almost_volterra}
        \mathbb{E}\Big[ 
          \frac{1}{n}
          \lambda^{(n)}_t(\theta) 
          \Big]
          =
          \mu(t,\theta_0)
          +
          \int_0^t \langle F, \varphi \rangle (t-s,\theta)
          \mathbb{E}\Big[ 
          \frac{1}{n}
          \lambda^{(n)}_s(\theta_0) 
          \Big]
          \dif s.
    \end{equation}
    Setting $\theta = \theta_0$ shows the normalised intensity at the true parameter satisfies a Volterra integral equation with baseline $\mu(\cdot, \theta_0)$ and kernel $\langle F, \varphi \rangle (\cdot, \theta_0)$. By virtue of Lemma~\ref{lemma:on_volterra}, we have Lemma~\ref{lemma:normalized_intensity}. Inserting the resulting expression for $\lambda^{(n)}_s(\theta_0)$ into the last term of~\eqref{equ:almost_volterra} yields corollary~\ref{coro:mean_intensity}.
  \end{proof}

  Following corollary~\ref{coro:mean_intensity}, define
\begin{align*}
\Bar{\lambda}(t,\theta,\theta_0)     =
    \big(\bar{\lambda}_{k} (t, \theta,\theta_0) \big)_k
    &=
    \Big(
    \mu_k(t,\theta) 
    +
    \int_0^t 
        \sum_{l=1}^K
         \langle F,  \varphi \rangle _{kl}(t-s,\theta)  h_l(s, \theta_0)
    \dif s\Big)_k. 
\end{align*}
With the same differential notation $    \partial^{\otimes 1}_\theta u 
    = (\partial_{\theta_i} u)_i
    $ 
    and
    $
    \partial^{\otimes 2}_\theta u = ( \partial_{\theta_i} \partial_{\theta_j} u)_{ij},
$ as Chen \& Hall~\cite{FengInferenceForNonStationarySEPP}, define for any $p \in \{1,2\}$
\begin{align*}
\partial_{\theta}^{\otimes p} \Bar{\lambda}(t,\theta,\theta_0) =
    \big( \partial^{  \otimes p}_\theta\bar{\lambda}_{k}(t,\theta,\theta_0)
    \big)
    &=
    \Big( \partial^{ \otimes  p}_\theta \mu_k(t,\theta) 
    +
    \int_0^t 
        \sum_{l=1}^K
        \partial^{\otimes  p}_\theta \langle F,  \varphi \rangle _{kl}(t-s,\theta)  h_l(s, \theta_0)
    \dif s\Big)_k.   
\end{align*}

 Note that $\Bar{\lambda}$ does not depend on $n$ and that $\Bar{\lambda}(\cdot, \theta_0,\theta_0)$ is simply $h(\cdot,\theta_0)$.
 As mentioned in the introduction, we will be concerned with obtaining a law of large number for the intensity. The deviation of $n^{-1} \lambda^{(n)}$ from $\Bar{\lambda}$  need be further precised. To this end, we extend a result of Jaisson \& Rosenbaum~\cite{JaissonRosenbaum}[proposition 2.1 p. 7] (see also Takeuchi~\cite{takeushi}[Proposition 1 p. 230]).

\begin{lemma}\label{lemma:intensity_decomposition}
Under Assumption~\ref{ass: stability}, for any $\theta \in \Theta$ and $t \in [0,T]$,
\begin{equation}\label{intensity_decomposition} 
    \frac{1}{n}
    \lambda^{(n)}(t,\theta) 
    =
    \Bar{\lambda}(t,\theta,\theta_0)
    + 
    \int_{[0,t) \times \R^m}
    \Psi(t-s,x,\theta_0,\theta) 
    \frac{M}{n}^{(n)}(\dif s, \dif x)
\end{equation}
where $\Psi$ is defined by
\begin{equation*}
    \Psi
    =
    \sum_{k \geq 1}
    \Tilde{\varphi}_k
    \hspace{0.5cm}
    \text{with}
    \hspace{0.5cm}
    \Tilde{\varphi}_k(s,x,\theta_0,\theta) 
    =
    \langle F,
    \varphi
    \rangle ^{\star (k-1)}(\cdot, \theta_0) \star \varphi(\cdot,x,\theta)(s).
\end{equation*}
\end{lemma}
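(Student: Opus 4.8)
The plan is to peel off one layer of the self-exciting dynamics, solve the resulting Volterra equation at the true parameter by means of Lemma~\ref{lemma:on_volterra}, and then reassemble the deterministic and the martingale contributions through a (stochastic) Fubini argument.

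First I would use that, under $\Prob(\theta_0)$, the aggregated measure $\boldsymbol{N}^{(n)}$ admits $\lambda^{(n)}_s(\theta_0)\dif s\,F(\dif x)$ as compensator, so that $\boldsymbol{N}^{(n)}(\dif s,\dif x)=\boldsymbol{M}^{(n)}(\dif s,\dif x)+\lambda^{(n)}_s(\theta_0)\dif s\,F(\dif x)$. Dividing the intensity of Definition~\ref{def:marked_hawkes_process} by $n$ and inserting this identity gives, for every $t\in[0,T]$ and $\theta\in\Theta$, the one-step decomposition
\begin{equation*}
    \frac{1}{n}\lambda^{(n)}_t(\theta)
    =
    \mu\Big(\frac{t}{T},\theta\Big)
    +
    \int_{[0,t)\times\R^m}\varphi(t-s,x,\theta)\,\frac{1}{n}\boldsymbol{M}^{(n)}(\dif s,\dif x)
    +
    \int_0^t \langle F,\varphi\rangle(t-s,\theta)\,\frac{1}{n}\lambda^{(n)}_s(\theta_0)\,\dif s,
\end{equation*}
where the last term comes from $\int_{\R^m}\varphi(t-s,x,\theta)F(\dif x)=\langle F,\varphi\rangle(t-s,\theta)$. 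For general $\theta$ this is not a closed equation: it expresses $\tfrac1n\lambda^{(n)}_t(\theta)$ explicitly once $\tfrac1n\lambda^{(n)}(\theta_0)$ is known.

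Taking $\theta=\theta_0$ turns the display into a Volterra equation for $g(\cdot):=\tfrac1n\lambda^{(n)}_\cdot(\theta_0)$, with kernel $\langle F,\varphi\rangle(\cdot,\theta_0)$ and forcing $m(t)=\mu(t/T,\theta_0)+\int_{[0,t)\times\R^m}\varphi(t-s,x,\theta_0)\tfrac1n\boldsymbol{M}^{(n)}(\dif s,\dif x)$. Under Assumption~\ref{ass: stability} the relevant spectral radius is below one, so the Neumann series $\langle F,\Psi\rangle(\cdot,\theta_0)=\sum_{k\ge1}\langle F,\varphi\rangle^{\star k}(\cdot,\theta_0)$ converges in sup-norm over the compact $[0,T]$; the resolvent identity underlying Lemma~\ref{lemma:on_volterra} then holds verbatim for the locally bounded path $m$ (one applies it $\omega$ by $\omega$), giving $g=m+\langle F,\Psi\rangle(\cdot,\theta_0)\star m$. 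Splitting $m$ into its mean $\mu(\cdot/T,\theta_0)$ and its martingale part, the deterministic contribution is exactly the solution $h(\cdot,\theta_0)$ of Lemma~\ref{lemma:normalized_intensity}, while the remainder is a stochastic integral against $\boldsymbol{M}^{(n)}$ carrying the kernel $\Psi(\cdot,x,\theta_0,\theta_0)=\sum_{k\ge1}\tilde\varphi_k(\cdot,x,\theta_0,\theta_0)$.

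Finally I would substitute this representation of $\tfrac1n\lambda^{(n)}_s(\theta_0)$ back into the one-step decomposition. The deterministic terms collapse to $\mu(t/T,\theta)+\int_0^t\langle F,\varphi\rangle(t-s,\theta)h(s,\theta_0)\dif s=\Bar{\lambda}(t,\theta,\theta_0)$, in agreement with Corollary~\ref{coro:mean_intensity}. For the stochastic terms one interchanges the Lebesgue integration in $s$ with the integration against $\boldsymbol{M}^{(n)}$ and sums the geometric cascade of convolutions that arises; together with the direct term $\varphi(\cdot,x,\theta)$ this reconstitutes a single stochastic integral whose kernel is the resolvent series $\Psi=\sum_{k\ge1}\tilde\varphi_k$ of the statement, which is precisely~\eqref{intensity_decomposition}. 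The main obstacle is this last interchange: since $\boldsymbol{M}^{(n)}=\boldsymbol{N}^{(n)}-\lambda^{(n)}(\theta_0)\dif s\,F(\dif x)$, it splits into an ordinary Fubini for the a.s.\ finite sum over the events of $\boldsymbol{N}^{(n)}$ on $[0,T]$ and a Fubini for the double Lebesgue integral against the compensator, both legitimate by the absolute integrability of the iterated kernels over the compact horizon. That integrability, and the convergence of the resolvent series, are guaranteed by Assumption~\ref{ass: stability} together with the local boundedness of $\varphi$ provided by Assumptions~\ref{ass:f_continuity} and~\ref{ass:g_continuity}.
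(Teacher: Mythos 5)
Your proposal is correct and follows essentially the same route as the paper: the one-step decomposition of the intensity via the compensator, solving the resulting Volterra equation at $\theta_0$ with Lemma~\ref{lemma:on_volterra}, and re-inserting with a Fubini interchange to assemble the resolvent $\Psi$. You are in fact slightly more explicit than the paper about the pathwise application of the resolvent identity to a merely locally bounded forcing and about why the stochastic Fubini step is legitimate, but the argument is the same.
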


\begin{remark}
    The function $\Psi$ defined in Lemma~\ref{lemma:intensity_decomposition} verifies for any $\theta \in \Theta$
\begin{equation*}
    \langle F, \Psi \rangle (\cdot,\theta_0,\theta)
    =
    \int_{\R^m} \Psi(\cdot,x,\theta_0,\theta) F(\dif x)
    =
    \langle F, \varphi\rangle(\cdot,\theta) + \langle F, 
    \varphi\rangle(\cdot,\theta)  \star 
    \sum_{k=1}^\infty \langle F,\varphi \rangle^{\star k}(\cdot,\theta_0).
\end{equation*}
    At $\theta=\theta_0$, this yields $
        \langle F, \Psi \rangle(\cdot,\theta_0) 
        =
        \sum_{k=1}^\infty 
        \langle F, \varphi \rangle ^{\star k}(\cdot,\theta_0)$. The notation of Lemma~\ref{lemma:intensity_decomposition} is thus consistent with that of Lemma~\ref{lemma:normalized_intensity} and Corollary~\ref{coro:mean_intensity}, which it implies.
\end{remark}

\begin{proof}[Proof of Lemma~\ref{lemma:intensity_decomposition}]
    Let $\theta \in \Theta$ and $n \in \mathbb{N}^\star$. One has for any $t \in [0,T]$
    \begin{equation}\label{equ:ready_for_volterra}
        \frac{1}{n}\lambda_t^{(n)}(\theta)
        =
        \mu(t,\theta_0) 
        +
        \int_{[0,t) \times \R^m} \varphi(t-s,x,\theta) \frac{1}{n}\boldsymbol{M}^{(n)}(\dif s, \dif x)
        +
        \int_0^t\langle F, \varphi \rangle (t-s,\theta) \frac{1}{n} \lambda_s^{(n)}(\theta_0) \dif s.
    \end{equation}
    At $\theta=\theta_0$ in particular,  $n^{-1} \lambda^{(n)}_t(\theta_0)$ verifies a Volterra equation with kernel $\langle F, \varphi\rangle (\cdot,\theta_0)$. Applying  Lemma~\ref{lemma:on_volterra}, using Fubini's theorem, and re-arranging the terms, for any $t \in [0,T]$, 
    \begin{align}
        \frac{1}{n} \lambda^{(n)}_t(\theta_0) 
        &=
        \mu(t,\theta_0)
        +
        \int_0^t \langle F, \Psi \rangle (t-s,\theta_0,\theta) \mu(s,\theta_0) \dif s
        +
        \int_{[0,t) \times \R^m}
        \varphi(t-s,x,\theta_0)
        \frac{1}{n}\boldsymbol{M}^{(n)}(\dif s,\dif x)
        \nonumber
        \\
        &+
        \int_{[0,t) \times \R^m}
        \int_0^{t-u} 
        \langle F, \Psi \rangle (t-u-s,\theta_0,\theta)
        \varphi(s,x,\theta_0)
        \dif s
        \frac{1}{n}
        \boldsymbol{M}^{(n)}(\dif u, \dif x).
        \label{equ:volterra_on_M}
    \end{align}
    
     Recalling that $\Bar{\lambda}(\cdot ,\theta_0,\theta_0) =
     \mu(\cdot ,\theta_0) + \langle F, \Psi \rangle (\cdot, \theta_0) \star \mu (\cdot, \theta_0)$ this yields expression~\eqref{intensity_decomposition} at $\theta=\theta_0$. Reinserting the obtained decomposition for $\lambda^{(n)}_s(\theta_0)$ into~\eqref{equ:ready_for_volterra} and using Fubini's theorem again ends the proof.
\end{proof}

It is useful to observe the properties of the kernel $\varphi$ extend readily to the resolvent $\Psi$. When $\varphi$ is split in the sense of Assumption~\ref{ass: kernel_separability}, $\Psi$ is split too with, for any $t,x \in \R^+ \times \R^m$ and $\theta \in \Theta$,
    \begin{equation}\label{equ:separability_psi}
        \Psi\colon (t,x,\theta_0,\theta)
        =
        g(x,\theta) \odot 
        \Big( f(\cdot,\theta) + f(\cdot,\theta) \star 
         \langle F, \Psi \rangle (\cdot, \theta_0,\theta_0) \Big)(t).
    \end{equation}
Furthermore, for any  $t,s \in [0,T]$ and $k \in \mathbb{N}^\star$
\begin{equation*}
    \lVert \langle F, \varphi \rangle' (t-s,\theta_0)  \langle F, \varphi \rangle^{\star k}(s,\theta_0) \rVert_2
    \leq 
    \sup_{u \in [0,T]} \lVert \langle F, \varphi \rangle' (u,\theta_0) \rVert_2 \lVert \langle F, \varphi \rangle^{\star k}(s,\theta_0) \rVert_2,
\end{equation*}
where  $\langle F, \varphi \rangle^{\star k}$ is integrable over $\R^+$ under Assumption~\ref{ass: stability} and \textit{a fortiori} integrable over $[0,T]$, hence $\langle F, \varphi \rangle^{\star k+1} \in \mathcal{C}^1([0,T])$ for any $k \in \mathbb{N}$. From similar elementary bounds and the dominated convergence theorem, one finds for any $\theta \in \Theta$ that $f(\cdot,\theta) + f(\cdot,\theta) \star 
         \langle F, \Psi \rangle (\cdot, \theta_0,\theta_0) \in \mathcal{C}^1([0,T])$ and $\Psi$ thus writes as the Hadamard product of $g$ with a continuously differentiable function of $t \in [0,T]$.

\section{Proof of the consistency of the \textsc{mle}}\label{section:large_sample_discussion}

 \subsection{Technical results} Our proof differs somewhat from the univariate case of Chen \& Hall~\cite{FengInferenceForNonStationarySEPP} in which the \textsc{mle} is seen as a critical point of $\mathcal{L}(n,\cdot)$.  Such characterisation falls short in the multivariate case, where $\theta_0$ is not necessarily an inner point of $\Theta$. As mentioned in the introduction, an \textsc{m}-estimator approach is preferred instead.  We are then compelled to prove the convergence in probability of the local likelihood  ratio
\begin{equation}\label{equ: likelihood_ratio}
    \frac{1}{n}
    \Lambda_n(\theta, \theta_0) 
    =
    \frac{2}{n} \big(
        \mathcal{L}(n,\theta)
        -
        \mathcal{L}(n, \theta_0)
        \big)
\end{equation}

to some limit constrast function
\begin{equation}\label{equ:limit_divergence}
    \bar{\Lambda}(\theta,\theta_0)
    =
    2 \sum_{k=1}^K
    \int_0^T
    \ln \Big\{
        \frac
        {\Bar{\lambda}_k (s,\theta,\theta_0)}
        {\Bar{\lambda}_k (s,\theta_0,\theta_0)}
    \Big\} \bar{\lambda}_k (s,\theta_0,\theta_0) 
    -
    (
        \Bar{\lambda}_k (s,\theta,\theta_0)
        -
        \Bar{\lambda}_k (s,\theta_0,\theta_0)
    )
    \dif s,
\end{equation}

 as will be precised in Lemma~\ref{lemma:sufficient_cv} (the multiplicative factor $2$  serves so as to homogenise~\eqref{equ: likelihood_ratio} with future notation). From~\eqref{equ:log_lkl_def}, the random function $\Lambda_n$~\eqref{equ: likelihood_ratio} writes as a functional of the intensity process $(\lambda_s^{(n)}(\theta))$. From the definition~\eqref{equ:aggregation} of $(N^{(n)}_t)$ as as sum process and the linearity of the intensity, for any $\theta \in \Theta$, we may write
      $\int_0^T \lambda_t^{(n)}(\theta) \dif t
     =
     \sum_{k=1}^n Y_k(\theta)$ where the $Y_k(\theta)$ are i.i.d and have the distribution of $\int_0^T \lambda_t^{(1)}(\theta) \dif t$. One may therefore already observe that, from the classical law of large numbers,
\begin{equation}\label{equ:law_of_large_numbers}
    \frac{1}{n} \int_0^T  \lambda^{(n)}_s(\theta) \dif s
    \xrightarrow[n \to \infty]{\Prob(\theta_0)\text{-a.s}} \int_0^T \Bar{\lambda}(s,\theta,\theta_0) \dif s.
\end{equation}

Corollary~\ref{coro:N_goes_to_H} below will serve to extend such results to a broad class of functionals of the intensity, encompassing $\Lambda_n$. We first prove a uniform convergence for the type of processes appearing in decomposition~\eqref{intensity_decomposition}.

\begin{lemma}\label{lemma:Doob}
    Let $ v \in L^2(F(\dif x))$. For any $\theta \in \Theta$,
    \begin{align*}
        \sup_{t \in [0,T]}
        \big\lVert 
            \int_{[0,t] \times \R^m}
                v(x)
            \frac{1}{n}
            \boldsymbol{M}^{(n)}(\dif s, \dif x)
        \big\rVert_2
        & =o_{\Prob(\theta)} (1)\\
        \sup_{t \in [0,T]}
        \big\lVert 
            \int_{[0,t] \times \R^m}
                v(x)
            \frac{1}{\sqrt{n}}
            \boldsymbol{M}^{(n)}(\dif s, \dif x)
        \big\rVert_2
         &=O_{\Prob(\theta)} (1)
    \end{align*}
\end{lemma}

\begin{proof}[Proof of Lemma~\ref{lemma:Doob}]
    Let $v=(v_{kl}) \in L^2(F(\dif x))$.  For any  $n \in \mathbb{N}^\star$. We define  the process
    \begin{equation}
    \label{equ:a_time_independant_martingale}
        \Tilde{\boldsymbol{M}}^{(n)}_t
        =
        \int_{[0,t] \times \R^m}
            v(x) 
        \boldsymbol{M}^{(n)}(\dif s, \dif x)
        =
        \big(
        \sum_{l=1}^K
        \int_{[0,t] \times \R^m} 
        v_{kl}(x)        
        M_l^{(n)}(\dif s, \dif x)
        \big)_k.
    \end{equation}
    Since $v \in L^2(F(\dif x))$, $(\Tilde{\boldsymbol{M}}_t)$ is a square integrable martingale (see Brémaud~\cite{Bremaudbook}[Theorem 5.1.33 p. 170]). Using Remark~\ref{remark:diagonal_covariation}, the covariation of $(\Tilde{\boldsymbol{M}}_t)$ simplifies as 
    \begin{equation*}
        \langle
        \Tilde{M}^{(n)}_{k,\cdot}, \Tilde{M}^{(n)}_{l,\cdot}
        \rangle_t
        =
        \sum_{p=1}^K 
        \int_0^t 
        \int_{\R^m}
            v_{kp}(x) v_{lp}(x)
        \lambda_{p,s}^{(n)}(\theta_0) \dif s F_p(\dif x).
    \end{equation*}
     
     Hence the predictable compensator of the sub-martingale $\lVert \Tilde{\boldsymbol{M}} \lVert^2_2$ simplifies too, so that
    \begin{align*}
    \big\lVert 
        \frac{1}{n} \Tilde{\boldsymbol{M}}^{(n)}
    \big\rVert^2_2
    -
    \sum_{k=1}^K
    \langle 
        \frac{1}{n} \Tilde{M}^{(n)}_{k,\cdot}
        &,
        \frac{1}{n} \Tilde{M}^{(n)}_{k,\cdot}
    \rangle\\
    &=
    \Big(
        \big\lVert 
            \frac{1}{n} \Tilde{\boldsymbol{M}}^{(n)}_t
        \big\rVert^2_2
        -
        \frac{1}{n}
        \int_0^t
            \Big\{
                \sum_{k=1}^K
                \sum_{p=1}^K
                \int_{\R^m} 
                    v^2_{kp}(x)
                F_k(\dif x)
            \Big\}
            \frac{1}{n}\lambda^{(n)}_{p,s}(\theta_0)
        \dif s 
    \Big)_{k,t}
    \end{align*}
    
   is also a martingale. From this it follows, together with Lemma~\ref{lemma:normalized_intensity} and the positivity of the $\lambda^{(n)}_{k,\cdot}$,
    \begin{equation*}
        \mathbb{E}
        \big[
            \big\lVert 
                \frac
                {1}
                {n}
                \Tilde{\boldsymbol{M}}^{(n)}_T
            \big\rVert^2_2
        \big]
        \leq 
        \frac{1}{n}
        \big(
            \sum_{k=1}^K \sum_{p=1}^K
            \int_{\R^m}   v_{kp}^2(x)  F_k(\dif x)
        \big)
        \lVert H(T,\theta) \lVert_1,
    \end{equation*}
    where $H(\cdot,\cdot)$ is defined in~\eqref{equ:H_function}, and the same bound with $n^{-1}$ omitted holds for $n^{- \nicefrac{1}{2}} \Tilde{M}$.
    Applying Doob's maximal inequality to the sub-martingale $\lVert \Tilde{\boldsymbol{M}} \lVert^2_2 $ with the bound above yields Lemma~\ref{lemma:Doob}.
\end{proof}

\begin{corollary}\label{coro:N_goes_to_H}
Under Assumptions~\ref{ass:baseline} to~\ref{ass:g_continuity} and Assumption~\ref{ass: stability}, for any $\theta \in \Theta$,
\begin{equation*}
    \sup_{t \in [0,T]}
    \big\lVert \frac{1}{n}\lambda_t^{(n)}(\theta)
    -
    \Bar{\lambda}(t,\theta,\theta_0) 
    \big\rVert_2
    =
    o_{\Prob(\theta_0)}(1).
\end{equation*}
\end{corollary}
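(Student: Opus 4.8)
The plan is to start from the exact decomposition of Lemma~\ref{lemma:intensity_decomposition}, which identifies the centred quantity with a single stochastic term:
\[
\tfrac1n\lambda^{(n)}_t(\theta)-\bar\lambda(t,\theta,\theta_0)
=
R^{(n)}_t(\theta):=\int_{[0,t)\times\R^m}\Psi(t-s,x,\theta_0,\theta)\,\tfrac1n\boldsymbol{M}^{(n)}(\dif s,\dif x).
\]
It then suffices to show $\sup_{t\in[0,T]}\lVert R^{(n)}_t(\theta)\rVert_2=o_{\Prob(\theta_0)}(1)$. The difficulty, and the reason Lemma~\ref{lemma:Doob} does not apply directly, is that $R^{(n)}(\theta)$ is \emph{not} a martingale in $t$: the kernel $\Psi(t-s,\cdot)$ depends on the running upper limit $t$, so the integrand changes with $t$ and Doob's inequality is unavailable for $R^{(n)}$ as it stands.

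To remove this obstruction I would exploit the split form of the resolvent. Under Assumption~\ref{ass: kernel_separability}, relation~\eqref{equ:separability_psi} gives the factorisation $\Psi(u,x,\theta_0,\theta)=g(x,\theta)\odot\psi(u,\theta)$ with $\psi(\cdot,\theta):=f(\cdot,\theta)+f(\cdot,\theta)\star\langle F,\Psi\rangle(\cdot,\theta_0,\theta_0)$, which was shown at the end of Section~\ref{section:preparation_for_proofs} to be $\mathcal{C}^1$ on $[0,T]$; hence $\psi_{kl}(\cdot,\theta)$ and its time-derivative $\psi'_{kl}(\cdot,\theta)$ are bounded there. Integrating out the mark first, each scalar component of $R^{(n)}_t(\theta)$ reads $\sum_{l}\int_0^t\psi_{kl}(t-s,\theta)\,\dif a^{(n)}_{kl,s}$, where $a^{(n)}_{kl,s}:=\int_{[0,s]\times\R^m}g_{kl}(x,\theta)\,\tfrac1n M^{(n)}_l(\dif u,\dif x)$. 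Applying Lemma~\ref{lemma:Doob} for each pair $(k,l)$ separately — with the weight matrix whose only nonzero entry is $g_{kl}(\cdot,\theta)$, which lies in $L^2(F(\dif x))$ by Assumption~\ref{ass:g_continuity} — yields $\sup_{s\in[0,T]}|a^{(n)}_{kl,s}|=o_{\Prob(\theta_0)}(1)$.

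The key step is then a pathwise integration by parts in time. Since $\psi_{kl}(\cdot,\theta)$ is $\mathcal{C}^1$ and $a^{(n)}_{kl}$ is of finite variation on $[0,T]$ with $a^{(n)}_{kl,0}=0$,
\[
\int_0^t\psi_{kl}(t-s,\theta)\,\dif a^{(n)}_{kl,s}
=\psi_{kl}(0,\theta)\,a^{(n)}_{kl,t}
+\int_0^t\psi'_{kl}(t-s,\theta)\,a^{(n)}_{kl,s}\,\dif s.
\]
Taking the supremum over $t\in[0,T]$ and bounding the second term by $\sup_{s\in[0,T]}|a^{(n)}_{kl,s}|$ times $\sup_{t\in[0,T]}\int_0^t|\psi'_{kl}(t-s,\theta)|\,\dif s\le\lVert\psi'_{kl}(\cdot,\theta)\rVert_{L^1([0,T])}<\infty$, every term is at most a finite constant times $\sup_{s\in[0,T]}|a^{(n)}_{kl,s}|=o_{\Prob(\theta_0)}(1)$. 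Summing the finitely many $(k,l)$ contributions and passing to the $\ell^2$-norm over $k$ gives $\sup_{t\in[0,T]}\lVert R^{(n)}_t(\theta)\rVert_2=o_{\Prob(\theta_0)}(1)$, which is the claim. The main obstacle is precisely the running-time dependence that destroys the martingale property, and it is exactly this that the integration by parts eliminates, reducing the whole estimate to the single uniform martingale bound supplied by Lemma~\ref{lemma:Doob}.
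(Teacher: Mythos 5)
Your proof is correct and follows essentially the same route as the paper's: both start from the decomposition of Lemma~\ref{lemma:intensity_decomposition}, use the split form~\eqref{equ:separability_psi} of the resolvent to integrate by parts in time, and reduce the claim to the uniform martingale bound of Lemma~\ref{lemma:Doob}. Your explicit identification of the running-upper-limit dependence as the obstruction to applying Doob directly is exactly the point the paper makes in Remark~\ref{remark:IPP_applies}.
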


\begin{remark}\label{remark:IPP_applies}
    The splitting Assumption~\ref{ass: kernel_separability} proves instrumental at this point.  It allows for the integral of $u(\cdot) \odot v(\cdot)$ against the counting measure $N^{(n)}(\dif s, \dif x)$ to be interpreted as an integral of $u(\cdot)$ against the Riemann-Stieltjes measure $ \int_{\R}v(x) N^{(n)}(\dif s, \dif x)$. Thus, usual integration by parts applies (see: Daley \& Vere-Jones~\cite{DVJ}[lemma 4.6.1]).
\end{remark}

\begin{proof}[Proof of Corollary~\ref{coro:N_goes_to_H}]
Let $\theta \in \Theta$. For any $n \in \mathbb{N}^\star$ and $t \in [0,T]$,
\begin{equation*}
    \frac{1}{n}\lambda^{(n)}_t(\theta) 
    -
    \Bar{\lambda}(t,\theta,\theta_0)
    =
    \int_{[0,t) \times \R^m}
    \Psi(t-s,x,\theta_0,\theta) \frac{1}{n} \boldsymbol{M}^{(n)}(\dif s, \dif x).
\end{equation*}
From~\eqref{equ:separability_psi}, we have some $u \colon  t,\vartheta,\theta \mapsto u(t,\vartheta,\theta) \in \mathcal{M}_K(\R)$ such that $u(\cdot,\theta,\theta_0) \in \mathcal{C}^1(\R^+)$ for any $\theta \in \Theta$, and $\Psi(t,x,\theta_0,\theta) = u(t,\theta,\theta_0) \odot g(x,\theta)$. Integrating by parts, for any $n \in \mathbb{N}^\star$ and $t \in [0,T]$,
\begin{align*}
    \frac{1}{n} \lambda^{(n)}_t(\theta) 
    -
    \Bar{\lambda}(t,\theta,\theta_0)
    &=
    u(0,\theta,\theta_0) \odot \int_{[0,t) \times \R^m} g(x,\theta) \frac{1}{n}\boldsymbol{M}(\dif u, \dif x)
    \\
    &+
     \int_0^t u'(t-s,\theta,\theta_0) \odot \int_{[0,s) \times \R^m} g(x,\theta) \frac{1}{n}\boldsymbol{M}(\dif u, \dif x) \dif s,
\end{align*}
hence for any $\theta \in \Theta$ and $n \in \mathbb{N}^\star$,
\begin{equation}\label{equ:intensity_bound}
    \sup_{t \in [0,T]}
    \big\lVert 
    \frac{1}{n} \lambda^{(n)}_t(\theta) 
    -
    \Bar{\lambda}(t,\theta,\theta_0)
    \big\rVert_2 
    \leq C\big( u(\cdot,\theta,\theta_0)\big)
    \sup_{ t \in [0,T]} 
    \big\lVert \int_{[0,t] \times \R^m} g(x,\theta) \frac{1}{n}\boldsymbol{M}^{(n)}(\dif u, \dif x) \big\rVert_2,
\end{equation}
where
\begin{equation*}
    C\big( u(\cdot,\theta,\theta_0)\big)
    =
    \lVert u(0,\theta,\theta_0) \rVert_2
    +
    \int_0^T 
    \lVert u'(s,\theta,\theta_0) \rVert_2 \dif s.
\end{equation*}
Corollary~\ref{coro:N_goes_to_H} then proceeds directly from Lemma~\ref{lemma:Doob}.
\end{proof}

We now extend the previous convergence to regular functions of the intensity. 

\begin{lemma}\label{lemma:clinet_like} 
        Let  $\Phi \in \mathcal{C}^0(\R^{+ \star},\R)$. Under Assumptions~\ref{ass:baseline} to~\ref{ass:g_continuity} and Assumption~\ref{ass: stability}, for any $\theta \in \Theta, k \in \segN{1,K}$,
    \begin{align*}
        \sup_{t \in [0,T]}
        \big\lvert
        \Phi \big(
            \frac{1}{n} 
            \lambda_{k,t}^{(n)}(\theta) 
        \big)
        - 
        \Phi\big( \Bar{\lambda}_k(t,\theta,\theta_0)
        \big)
        \big\rvert
        =o_{\Prob(\theta_0)}(1).
    \end{align*}
\end{lemma}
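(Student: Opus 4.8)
The plan is to transfer the uniform convergence of the normalised intensity established in Corollary~\ref{coro:N_goes_to_H} through the continuous function $\Phi$. The chief subtlety is that $\Phi$ is only assumed continuous on the open half-line $\R^{+\star}$, so it is not uniformly continuous globally, and a priori could blow up near $0$ or $\infty$. I would therefore first confine both arguments $\frac{1}{n}\lambda^{(n)}_{k,t}(\theta)$ and $\Bar{\lambda}_k(t,\theta,\theta_0)$ to a fixed compact sub-interval of $(0,\infty)$ on which $\Phi$ is uniformly continuous, with probability tending to $1$.

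\textbf{Step 1: locate the limiting intensity in a compact of $(0,\infty)$.} By Assumption~\ref{ass:baseline}, $\mu_k(\cdot,\theta)>0$ is continuous on the compact $[0,1]$, and by Lemma~\ref{lemma:normalized_intensity} and Assumption~\ref{ass: stability} the convolution term $\int_0^t \langle F,\Psi\rangle(t-s,\theta_0)\mu(s,\theta_0)\dif s$ is continuous and nonnegative. Hence $\Bar{\lambda}_k(\cdot,\theta,\theta_0)$ is continuous on the compact $[0,T]$ and bounded below by $m_\theta:=\inf_{t\in[0,T]}\mu_k(t/T,\theta)>0$ and above by some finite $M_\theta$. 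Thus there is a compact interval $[a,b]\subset(0,\infty)$, with $0<a<m_\theta\le M_\theta<b$, containing the range of $t\mapsto\Bar{\lambda}_k(t,\theta,\theta_0)$.

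\textbf{Step 2: trap the empirical intensity in the same compact with high probability.} Let $\eta=\min(m_\theta-a,\,b-M_\theta)>0$. By Corollary~\ref{coro:N_goes_to_H}, $\sup_{t\in[0,T]}\lvert\frac{1}{n}\lambda^{(n)}_{k,t}(\theta)-\Bar{\lambda}_k(t,\theta,\theta_0)\rvert=o_{\Prob(\theta_0)}(1)$, so the event $A_n=\{\sup_{t\in[0,T]}\lvert\frac{1}{n}\lambda^{(n)}_{k,t}(\theta)-\Bar{\lambda}_k(t,\theta,\theta_0)\rvert<\eta\}$ satisfies $\Prob(\theta_0)(A_n)\to1$. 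On $A_n$, both $\frac{1}{n}\lambda^{(n)}_{k,t}(\theta)$ and $\Bar{\lambda}_k(t,\theta,\theta_0)$ lie in $[a,b]$ for every $t\in[0,T]$.

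\textbf{Step 3: apply uniform continuity on the compact.} Since $\Phi$ is continuous on $(0,\infty)$, it is uniformly continuous on $[a,b]$: for any $\varepsilon>0$ there is $\delta>0$ such that $\lvert x-y\rvert<\delta$ and $x,y\in[a,b]$ imply $\lvert\Phi(x)-\Phi(y)\rvert<\varepsilon$. Shrinking the threshold in Step~2 to $\min(\eta,\delta)$, on the corresponding event (still of probability tending to $1$) we have, for every $t\in[0,T]$, that $\lvert\frac{1}{n}\lambda^{(n)}_{k,t}(\theta)-\Bar{\lambda}_k(t,\theta,\theta_0)\rvert<\delta$ with both values in $[a,b]$, whence $\lvert\Phi(\frac{1}{n}\lambda^{(n)}_{k,t}(\theta))-\Phi(\Bar{\lambda}_k(t,\theta,\theta_0))\rvert<\varepsilon$. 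Taking the supremum over $t$ and letting $n\to\infty$ gives $\limsup_n\Prob(\theta_0)\big[\sup_{t\in[0,T]}\lvert\Phi(\frac{1}{n}\lambda^{(n)}_{k,t}(\theta))-\Phi(\Bar{\lambda}_k(t,\theta,\theta_0))\rvert\ge\varepsilon\big]=0$, which is precisely the claimed $o_{\Prob(\theta_0)}(1)$. The main obstacle is genuinely Step~1–2: because $\Phi$ (ultimately a logarithm, used for the log-likelihood) is only continuous on $(0,\infty)$, one must exploit the strict positivity and continuity of the baseline to bound $\Bar{\lambda}_k$ away from $0$ and $\infty$ uniformly in $t$, and then use the uniform convergence to confine the empirical intensity to the same compact before uniform continuity can be invoked.
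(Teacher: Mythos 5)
Your proposal is correct and follows essentially the same route as the paper's proof: sandwich $\tfrac{1}{n}\lambda^{(n)}_{k,\cdot}(\theta)$ around $\Bar{\lambda}_k(\cdot,\theta,\theta_0)$ uniformly in $t$ via Corollary~\ref{coro:N_goes_to_H}, confine both to a compact interval, and invoke Heine's uniform continuity of $\Phi$ there. If anything you are slightly more explicit than the paper in pinning the compact inside $(0,\infty)$ via the strict positivity of the baseline, which is exactly the point the hypothesis $\Phi\in\mathcal{C}^0(\R^{+\star})$ requires.
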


    \begin{proof}[Proof of Lemma~\ref{lemma:clinet_like}] Let $\theta \in \Theta$. Under Assumption~\ref{ass:baseline}, for any $n \in \mathbb{N}^\star$, $\lambda^{(n)}_k$ is strictly positive and $\Phi(n^{-1} \lambda^{(n)}_{k,s}(\theta))$ is indeed well defined for any $k \in \segN{1,K}$. By the inverse triangle inequality,  for any $t \in [0,T]$, and any $k \in \segN{1,K}$,
    \begin{equation}
    \label{equ:ready_for_heine}
    \inf_{s \in [0,T]}
            \Bar{
            \lambda}
            _k(s,\theta,\theta_0)
        +o_{\Prob(\theta_0)}(1)
        \leq  
            \frac{1}{n}
            \lambda^{(n)}_{k,t}(\theta)
        \leq 
        \sup_{s \in [0,T]}
            \Bar{
            \lambda}
            _k(s,\theta,\theta_0)        +o_{\Prob(\theta_0)}(1),
    \end{equation}
    where the negligible terms are uniform in time. Hence,  with probability tending to $1$ under $\Prob(\theta_0)$ as $n$ grows, $s \mapsto n^{-1} \lambda^{(n)}_s(\theta)$ eventually lies in some compact interval $J \subset \R$ in which $s \mapsto \Bar{\lambda}(s,\theta,\theta_0)$ takes values. Let $\varepsilon>0$. On account of Heine's Lemma, $\Phi$ is uniformly continuous over any compact subset of $\R$,  and we have some $\eta>0$ such that, for any two $x,y \in J$ verifying  $\lvert x - y\rvert < \eta$, one gets $\lvert \Phi(x) - \Phi(y) \rvert \leq \varepsilon$. From Corollary~\ref{coro:N_goes_to_H}, for any sufficiently large $n$, one has for any $t \in [0,T]$ that
    \begin{equation*}
        \big\lvert
            \frac{1}{n}
            \lambda^{(n)}_{k,t}(\theta) 
            -
            \Bar{\lambda}_k(t,\theta,\theta_0)
        \big\lvert 
        < 
        \eta,
    \end{equation*}
    with probability tending to $1$, thus proving Lemma~\ref{lemma:clinet_like}. 
    \end{proof}

The results above were written in the context of separable functions in the sense of Assumption~\ref{ass: kernel_separability}. They apply immediately to finite sum of such functions, provided they satisfy the required regularity conditions. Together with Assumptions~\ref{ass: kernel_separability},~\ref{ass:regularity_of_kernel} and~\ref{ass:regularity_of_weight}, this remark guarantees the convergence of corollary~\ref{coro:N_goes_to_H}  still holds for the class of functions on $\R^+ \times \R^m$ spanned by  $\partial_\theta^p \varphi(\cdot, \cdot, \theta)$, $\theta \in \Theta, p \in \{0,1,2\}$. Consequently, for any $\theta \in \Theta$ and $k \in \segN{1,K}$,

\begin{equation}
        \sup_{s \in [0,T]}
        \big\lVert
            \frac{1}{n} 
            \partial_\theta^{ \otimes p} 
            \lambda^{(n)}_{k,s}(\theta) 
        - 
            \partial_\theta^{\otimes p} 
            \bar{\lambda}_k(s,\theta,\theta_0)
        \big\lVert_1
        \xrightarrow[n \to \infty]{\Prob(\theta_0)}
        0.
        \label{equ:clinet_like_for_identity}
\end{equation}
Such results will prove useful as we tackle derivatives of $(\lambda^{(n)}_t)$ in the following sections.  Applying the arguments of the proof of Lemma~\ref{lemma:clinet_like} to~\eqref{equ:clinet_like_for_identity}, we may already state the results below before getting back to the consistency of $\hat{\theta}_n$.

\begin{lemma}\label{lemma:clinet_like_derivative}
  Under assumptions~\ref{ass:baseline} to~\ref{ass: stability}, for any $k \in \segN{1,K}$ and $i,j \in \segN{1,d}$, and $t \in [0,T]$, denote 
  \begin{equation*}
      Y^{(n)}_{k,ij}(t,\theta)= \big( \lambda^{(n)}_{k,t} (\theta), \partial_{\theta_i} \lambda^{(n)}_{k,t} (\theta), \partial_{\theta_j} \lambda^{(n)}_{k,t} (\theta), \partial_{\theta_i}\partial_{\theta_j} \lambda^{(n)}_{k,t} (\theta) \big),
  \end{equation*}
  
     and 
     \begin{equation*}
         \Bar{Y}_{k,ij}(t,\theta,\theta_0)= \big( \Bar{\lambda}_{k}(t,\theta,\theta_0), \partial_{\theta_i} \Bar{\lambda}_{k}(t,\theta,\theta_0), \partial_{\theta_j} \Bar{\lambda}_{k}(t,\theta,\theta_0), \partial_{\theta_i}\partial_{\theta_j} \Bar{\lambda}_{k}(t,\theta,\theta_0)\big).
     \end{equation*}
     
     Then, for any $\theta \in \Theta$ and  any continuous function $\Phi$ from $(0,\infty) \times \R^3$ to $\R$,
  \begin{equation*}
      \sup_{t \in [0,T]}
      \Big\lvert 
        \Phi\big( \frac{1}{n} Y^{(n)}_{k,ij}(t,\theta) \big)
        -
        \Phi\big( \Bar{Y}_{k,ij}(t,\theta,\theta_0) \big)
      \Big\lvert 
      =o_{\Prob(\theta_0)(1)}.
  \end{equation*}

\end{lemma}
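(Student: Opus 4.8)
The plan is to mimic the proof of Lemma~\ref{lemma:clinet_like}, the only difference being that the scalar argument of $\Phi$ is replaced by the four-dimensional vector $\frac{1}{n}Y^{(n)}_{k,ij}$. The essential input is already available: by~\eqref{equ:clinet_like_for_identity}, for each $p \in \{0,1,2\}$ the normalised derivative $\frac{1}{n}\partial_\theta^{\otimes p}\lambda^{(n)}_{k,s}(\theta)$ converges uniformly in $s\in[0,T]$ to $\partial_\theta^{\otimes p}\Bar{\lambda}_k(s,\theta,\theta_0)$ in $\Prob(\theta_0)$-probability. Since each of the four coordinates of $Y^{(n)}_{k,ij}$ is one of these derivatives (with $p=0$ for the intensity, $p=1$ for the two first derivatives, and $p=2$ for the cross derivative), this yields the coordinatewise uniform convergence $\sup_{t}\lVert \frac{1}{n}Y^{(n)}_{k,ij}(t,\theta) - \Bar{Y}_{k,ij}(t,\theta,\theta_0)\rVert = o_{\Prob(\theta_0)}(1)$.

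First I would confine the argument of $\Phi$ to a fixed compact subset of its domain $(0,\infty)\times\R^3$. For the last three coordinates this is immediate: each converges to a limit that, being continuous in $t$ on the compact $[0,T]$ under the regularity Assumptions~\ref{ass:regularity_of_mu}--\ref{ass:regularity_of_weight}, is bounded, so with probability tending to $1$ these coordinates lie uniformly in $t$ inside a fixed compact interval of $\R$. The first coordinate requires slightly more care, because the domain of $\Phi$ excludes $\{0\}\times\R^3$: here I would invoke the bound~\eqref{equ:ready_for_heine}, noting that $\Bar{\lambda}_k(\cdot,\theta,\theta_0)\ge \mu_k(\cdot,\theta)$ is bounded below away from $0$ on $[0,T]$ by Assumption~\ref{ass:baseline}, so that the normalised intensity $\frac{1}{n}\lambda^{(n)}_{k,t}(\theta)$ stays, uniformly in $t$ and with probability tending to $1$, inside a compact interval $J_0\subset(0,\infty)$. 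Consequently both $\frac{1}{n}Y^{(n)}_{k,ij}(t,\theta)$ and its limit $\Bar{Y}_{k,ij}(t,\theta,\theta_0)$ lie in a fixed compact set $J\subset(0,\infty)\times\R^3$.

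It then remains to run the same $\varepsilon$--$\eta$ argument as in Lemma~\ref{lemma:clinet_like}. Given $\varepsilon>0$, Heine's theorem provides $\eta>0$ such that $\lvert\Phi(u)-\Phi(v)\rvert<\varepsilon$ whenever $u,v\in J$ satisfy $\lVert u-v\rVert<\eta$. By the coordinatewise uniform convergence recalled above, $\sup_{t}\lVert\frac{1}{n}Y^{(n)}_{k,ij}(t,\theta)-\Bar{Y}_{k,ij}(t,\theta,\theta_0)\rVert<\eta$ with probability tending to $1$, which forces $\sup_{t}\lvert\Phi(\frac{1}{n}Y^{(n)}_{k,ij}(t,\theta))-\Phi(\Bar{Y}_{k,ij}(t,\theta,\theta_0))\rvert<\varepsilon$ with probability tending to $1$, i.e.\ the claimed $o_{\Prob(\theta_0)}(1)$ bound. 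I do not expect any genuine obstacle here: the only point demanding attention is keeping the first coordinate uniformly bounded away from $0$, so as to remain in the region where $\Phi$ is defined and uniformly continuous; everything else is a routine vector-valued transcription of the scalar case handled in Lemma~\ref{lemma:clinet_like}.
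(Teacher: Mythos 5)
Your proposal is correct and follows exactly the route the paper takes: the paper dispenses with a separate proof by remarking that Lemma~\ref{lemma:clinet_like_derivative} follows from applying the argument of Lemma~\ref{lemma:clinet_like} to the uniform convergences in~\eqref{equ:clinet_like_for_identity}, which is precisely the coordinatewise-convergence-plus-Heine argument you spell out. Your added care in keeping the first coordinate uniformly bounded away from $0$ (via $\Bar{\lambda}_k \geq \mu_k > 0$) is the one detail the paper leaves implicit, and you handle it correctly.
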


\subsection{Proof of proposition~\ref{prop:large_sample_consitency} }
The proof relies essentially on Lemma~\ref{lemma:sufficient_cv} which states the convergence of the contrast function $\Lambda_n$ towards the limit function $\Bar{\Lambda}$ defined in~\eqref{equ:limit_divergence} . Before getting to its proof, remark as in Ogata~\cite{OgataMLE}[proof of Lemma 3 pp. 252--253] that $\Bar{\Lambda}$ re-expresses as 
\begin{equation*}
\theta \mapsto \Bar{\Lambda}(\theta,\theta_0)
=
    \sum_{k=1}^K \int_0^t L\Big( \frac{\bar{\lambda}_k(s,\theta,\theta_0)}{\bar{\lambda}_k(s,\theta_0,\theta_0)} \Big)
    \bar{\lambda}_k(s,\theta_0,\theta_0) \dif s,
\end{equation*}
where  
\begin{equation*}
    L \colon y \in (0,\infty) \mapsto \ln(y) +\frac{1}{y}  -1
\end{equation*} verifies $L(y) \leq 0$ for any $y>0$ with equality if and only if $y=1$. The function $\theta \mapsto \Bar{\Lambda}(\theta,\theta_0)$ thus attains a separable maximum at $\theta_0$ as long as $\bar{\lambda}_k(\cdot,\theta,\theta_0)$ and $\bar{\lambda}_k(\cdot,\theta_0,\theta_0)$ differ on an interval of non-null measure when $\theta \neq \theta_0$, which always holds true under assumptions~\ref{ass: stability} and~\ref{ass:identifiability}. When Lemma~\ref{lemma:sufficient_cv} holds, the \textsc{m}-estimator master theorem (see Van der Vaart~\cite{VanDenVaartAsymptoticStatistics}[Theorem 5.7]) therefore applies, ensuring the consistency of the \textsc{mle} as $n \to \infty$.

\begin{lemma}
\label{lemma:sufficient_cv}
    Under Assumptions~\ref{ass:baseline} to~\ref{ass:identifiability} and Assumption~\ref{ass: stability}
    \begin{equation}\label{equ:sufficient_cv}
    \sup_{\theta \in \Theta}
    \big\lVert
        \frac{1}{n}
        \Lambda_n(\theta_0,\theta) 
        -
        \Bar{\Lambda}(\theta,\theta_0)
    \big\rVert_1 
    \xrightarrow[n \to \infty]{\Prob(\theta_0)} 0.
    \end{equation}
\end{lemma}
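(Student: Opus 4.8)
The plan is to start from the log-likelihood~\eqref{equ:log_lkl_def}, split the counting-measure integral in $\tfrac{1}{n}\Lambda_n(\theta,\theta_0)$ of~\eqref{equ: likelihood_ratio} into its compensator and a martingale remainder through $\dif N^{(n)}_{k,s} = \lambda^{(n)}_{k,s}(\theta_0)\dif s + \dif M^{(n)}_{k,s}$, and handle the two pieces separately before upgrading to uniformity in $\theta$ with Lemma~\ref{lemma:pointwise_to_uniform}. Using that the log-ratio is scale invariant, so that $\ln\big(\lambda^{(n)}_{k,s}(\theta)/\lambda^{(n)}_{k,s}(\theta_0)\big)$ equals the same ratio of normalised intensities, I would write $\tfrac{1}{n}\Lambda_n(\theta,\theta_0) = A_n(\theta) + B_n(\theta)$ with the compensator term
\[
  A_n(\theta) = 2\sum_{k=1}^K\int_0^T \Phi\Big( \tfrac{1}{n}\lambda^{(n)}_{k,s}(\theta),\, \tfrac{1}{n}\lambda^{(n)}_{k,s}(\theta_0)\Big)\dif s, \qquad \Phi(a,b) = \ln(a/b)\,b - (a-b),
\]
and the martingale term
\[
  B_n(\theta) = \frac{2}{n}\sum_{k=1}^K\int_0^T \ln\frac{\lambda^{(n)}_{k,s}(\theta)}{\lambda^{(n)}_{k,s}(\theta_0)}\,\dif M^{(n)}_{k,s}.
\]

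For $A_n$, note that $\Phi$ is continuous on $(0,\infty)^2$, and by Assumption~\ref{ass:baseline} the normalised intensities stay bounded away from $0$ and (by Corollary~\ref{coro:N_goes_to_H}) bounded above, uniformly in time, so their values lie in a fixed compact of $(0,\infty)^2$ with $\Prob(\theta_0)$-probability tending to one. Applying Corollary~\ref{coro:N_goes_to_H} at both $\theta$ and $\theta_0$ and the uniform-continuity argument from the proof of Lemma~\ref{lemma:clinet_like} to the jointly continuous $\Phi$ yields, for each fixed $\theta$, the uniform-in-time convergence of $\Phi\big(\tfrac1n\lambda^{(n)}_{k,s}(\theta),\tfrac1n\lambda^{(n)}_{k,s}(\theta_0)\big)$ to $\Phi\big(\Bar{\lambda}_k(s,\theta,\theta_0),\Bar{\lambda}_k(s,\theta_0,\theta_0)\big)$. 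Integrating over $[0,T]$ gives $A_n(\theta) \to \Bar{\Lambda}(\theta,\theta_0)$ in $\Prob(\theta_0)$-probability, pointwise in $\theta$, which is precisely the definition~\eqref{equ:limit_divergence} of the contrast.

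For the martingale remainder I would prove $B_n(\theta) = o_{\Prob(\theta_0)}(1)$ pointwise in $\theta$. On the high-probability event above, the integrand is bounded by a deterministic constant $C$ uniformly in $s$; by Remark~\ref{remark:diagonal_covariation} the predictable variation of $\int_0^\cdot \ln\big(\lambda^{(n)}_{k,s}(\theta)/\lambda^{(n)}_{k,s}(\theta_0)\big)\dif M^{(n)}_{k,s}$ is then at most $C^2\int_0^T \lambda^{(n)}_{k,s}(\theta_0)\dif s$, which is $O_{\Prob(\theta_0)}(n)$ by~\eqref{equ:law_of_large_numbers}. Doob's maximal inequality, exactly as in the proof of Lemma~\ref{lemma:Doob}, then controls the martingale by $O_{\Prob(\theta_0)}(\sqrt{n})$, whence $B_n(\theta) = O_{\Prob(\theta_0)}(n^{-1/2}) = o_{\Prob(\theta_0)}(1)$.

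It remains to promote the pointwise convergence $A_n(\theta) + B_n(\theta) - \Bar{\Lambda}(\theta,\theta_0) = o_{\Prob(\theta_0)}(1)$ into the uniform statement~\eqref{equ:sufficient_cv} via Lemma~\ref{lemma:pointwise_to_uniform}, which requires the stochastic equicontinuity~\eqref{equ:a_stochastic_continuity_condition_for_domination} of $\theta\mapsto A_n(\theta)+B_n(\theta)$. The key input is that the kernel regularity Assumptions~\ref{ass:f_continuity} and~\ref{ass:g_continuity} (transferred to the resolvent $\Psi$ as in~\eqref{equ:separability_psi}) make $\theta\mapsto \tfrac1n\lambda^{(n)}_{k,s}(\theta)$ equicontinuous uniformly in $s\in[0,T]$; composing with the uniformly continuous $\Phi$ on the relevant compact transfers equicontinuity to $A_n$, and the difference of $B_n$ at two nearby parameters is again a martingale integral whose integrand is uniformly small, so the same Doob estimate makes it uniformly negligible. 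I expect this last step — establishing the equicontinuity of the martingale part \emph{uniformly} over a continuum of parameters rather than merely pointwise — to be the main obstacle, since a supremum over $\theta$ is not itself a martingale; controlling it cleanly may require the covering argument underlying Lemma~\ref{lemma:pointwise_to_uniform}, or, should the decay in $\eta$ prove insufficient, the chaining estimate of Lemma~\ref{lemma:chaining} fed by the concentration bound of Lemma~\ref{lemma:concentration}.
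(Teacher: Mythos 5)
Your proposal follows essentially the same route as the paper: split the likelihood ratio into a compensator part and a martingale part, prove pointwise convergence via Corollary~\ref{coro:N_goes_to_H} and the Heine-type argument of Lemma~\ref{lemma:clinet_like}, kill the martingale with Doob's inequality, and uniformise with Lemma~\ref{lemma:pointwise_to_uniform}. Two remarks on where you deviate. First, your martingale term $B_n$ carries the \emph{empirical} log-ratio as integrand, which is random and only bounded on a high-probability event; Doob's $L^2$ inequality needs an almost-sure bound on the predictable variation, so you would have to localise (stop the martingale when $\tfrac1n\lambda^{(n)}$ exits the compact $J$, an event of vanishing probability). The paper sidesteps this by putting the \emph{deterministic} limit log-ratio $\ln(\Bar{\lambda}_k(\cdot,\theta,\theta_0)/\Bar{\lambda}_k(\cdot,\theta_0,\theta_0))$ inside the $\dif M^{(n)}$ integral and absorbing the difference of the two log-ratios into a $\dif N^{(n)}$-integral, which is then controlled crudely by $\sup_s\lvert\cdot\rvert\cdot\lVert\tfrac1n\boldsymbol{N}^{(n)}_T\rVert_1$. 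Second, your closing worry about needing chaining (Lemmata~\ref{lemma:chaining} and~\ref{lemma:concentration}) for the equicontinuity of the martingale part is unfounded: since $\lvert\dif M^{(n)}\rvert$ is dominated in total variation by $\dif N^{(n)}+\lambda^{(n)}(\theta_0)\dif s$, the increment $B_n(\vartheta_1)-B_n(\vartheta_2)$ is bounded by $\sup_s\lvert\ln\tfrac1n\lambda^{(n)}_s(\vartheta_1)-\ln\tfrac1n\lambda^{(n)}_s(\vartheta_2)\rvert$ times an $O_{\Prob(\theta_0)}(1)$ mass, and the uniform equicontinuity of $\mu$ and $\varphi$ in $\theta$ (Assumptions~\ref{ass:f_continuity}(ii) and~\ref{ass:g_continuity}(ii)) does the rest, exactly as in the paper's equicontinuity step; chaining is reserved in this paper for the $\sqrt{n}$-rate statement of Lemma~\ref{lemma:sqrt_consistency_preparation}, not for this law of large numbers.
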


\begin{proof}[Proof of Lemma~\ref{lemma:sufficient_cv}]

    We begin by showing convergence~\eqref{equ:sufficient_cv} holds pointwise at any $\theta \in \Theta$. Let $\theta \in \Theta$. Firstly,
    \begin{equation*}
    \sum_{k=1}^K
        \int_0^T
        \frac{1}{n}\{
        \lambda^{(n)}_{k,s}(\theta_0)
        -
        \lambda^{(n)}_{k,s} (\theta)
        \}
        -
        \{
        \Bar{\lambda}_k (s,\theta_0,\theta_0)
        -
        \Bar{\lambda}_k (s,\theta,\theta_0)
        \}
        \dif s
    \end{equation*}
     
    vanishes in probability under $\Prob(\theta_0)$ as $n \to \infty$ as a direct application of the uniform convergence  from Lemma~\ref{lemma:clinet_like}. Secondly, recalling that $h(\cdot,\theta_0)=\Bar{\lambda}(\cdot,\theta_0,\theta_0)$,
    \begin{align} 
    \sum_{k=1}^K \int_0^T
        \ln 
        \Big(
            \frac
            {\lambda^{(n)}_{k,s}(\theta_0)}
            {\lambda^{(n)}_{k,s}(\theta)}
        \Big) 
        \frac{1}{n}\dif N^{(n)}_{k,s}
    &-
        \ln 
        \Big(
            \frac
            {\Bar{\lambda}_k(s,\theta_0,\theta_0)}
            {\Bar{\lambda}_k(s,\theta,\theta_0)}
        \Big) h_k(s,\theta_0) \dif s
    \nonumber
    \\
    &=
    \sum_{k=1}^K \int_0^T \ln  
    \Big( 
    \frac
            {\Bar{\lambda}_k(s,\theta_0,\theta_0)}
            {\Bar{\lambda}_k(s,\theta,\theta_0)}\Big)
     \frac{1}{n}  \dif M_{k,s}^{(n)}
    \nonumber 
    \\
    &+
    \sum_{k=1}^K \int_0^T
    \ln 
        \Big(
            \frac
            {\Bar{\lambda}_k(s,\theta_0,\theta_0)}
            {\Bar{\lambda}_k(s,\theta,\theta_0)}
        \Big) 
    \{ \frac{1}{n}  \lambda^{(n)}_{k,s}(\theta_0)- \Bar{\lambda}_k(s,\theta_0,\theta_0)  \}\dif s
    \label{equ:LR_split}
    \\
    &-
    \sum_{k=1}^K \int_0^T
    \big\{
    \ln
        \Big(
            \frac
            { \frac{1}{n} \lambda^{(n)}_{k,s}(\theta_0)}
            { \frac{1}{n} \lambda^{(n)}_{k,s}(\theta)}
        \Big) 
    -
    \ln 
        \Big(
            \frac
            {\Bar{\lambda}_k(s,\theta_0,\theta_0)}
            {\Bar{\lambda}_k(s,\theta,\theta_0)}
        \Big) 
    \big\}
    \frac{1}{n} \dif N_{k,s}^{(n)}.
    \nonumber
    \end{align}
    
     Now, the third and last term on the right-hand side of~\eqref{equ:LR_split} is bounded in absolute value by
\begin{align}
    \sup_{t \in [0,T]} 
    \sum_{k=1}^K\Big\lvert 
    \ln
        \Big(
            \frac
            { \frac{1}{n} \lambda^{(n)}_{k,s}(\theta_0)}
            { \frac{1}{n} \lambda^{(n)}_{k,s}(\theta)}
        \Big) 
    -
    \ln 
        \Big(
            \frac
            {\Bar{\lambda}_k(s,\theta_0,\theta_0)}
            {\Bar{\lambda}_k(s,\theta,\theta_0)}
        \Big) 
    \Big\rvert
    \big\lVert 
        \frac{1}{n} \boldsymbol{N}^{(n)}_T
    \big\rVert_1,
    \label{equ:last_term_to_control_in_score}
\end{align}

where the two multiplicative factors in~\eqref{equ:last_term_to_control_in_score} are respectively $o_{\Prob(\theta_0)}(1)$ and $O_{\Prob(\theta_0)}(1)$ from Lemma~\ref{lemma:clinet_like} and the classical law of large numbers. As for the second term of~\eqref{equ:LR_split}, we have the bound
\begin{equation*}
    \sum_{k=1}^K
    \int_0^T \Big\lvert 
        \ln 
        \Big(
            \frac
            {\Bar{\lambda}_k(s,\theta_0,\theta_0)}
            {\Bar{\lambda}_k(s,\theta,\theta_0)}
        \Big) 
    \Big\rvert
    \dif s
    \sup_{t \in [0,T]}
    \big\lVert 
    \frac{1}{n}\lambda_{k,s}^{(n)}(\theta_0)
    -
    \bar{\lambda}_k(s,\theta,\theta_0)
    \big\rVert,
\end{equation*}

where  $s \mapsto \Bar{\lambda}(s , \theta, \theta_0) $ is locally bounded for any $\theta \in \Theta$ and Corollary~\ref{coro:N_goes_to_H} guarantees the second factor of the preceding decomposition vanishes as $n \to \infty$. Finally, a straightforward adaptation of the proof for Lemma~\ref{lemma:Doob} shows the first and remaining term on the right-hand side of~\eqref{equ:LR_split} is $o_{\Prob(\theta_0)}(1)$ too as a result of Doob's maximal inequality. Consequently, for every $\theta \in \Theta$
\begin{equation}
    \frac{1}{n} \Lambda_n(\theta_0,\theta)
    \xrightarrow[n \to \infty]{\Prob(\theta_0)}
    \Bar{\Lambda}(\theta,\theta_0) .
    \label{equ:score_pointwise_convergence}
\end{equation}

There remains to extend~\eqref{equ:score_pointwise_convergence} uniformly over $\Theta$. By virtue of Lemma~\ref{lemma:pointwise_to_uniform} it suffices to show the stochastic equicontinuity of $n^{-1} \Lambda_n - \Bar{\Lambda}$, which reduces to that of $\Lambda_n$ since $\Bar{\Lambda}$ is deterministic and continuous over the compact $\Theta$. Let $\varepsilon>0$. The baseline $\mu$ and the kernel $\varphi$ are  uniformly equi-continuous in $\theta$.   For any sufficiently close $\vartheta_1,\vartheta_2$ in $\Theta$ thus, one has for any $t \in [0,T]$ and $n \in \mathbb{N}^\star$ 
\begin{align*}
   \lVert 
       \frac{1}{n} \lambda^{(n)}_t(\vartheta1)
       &-
       \frac{1}{n} \lambda^{(n)}_t(\vartheta2)
    \rVert_1
    \\
    &\leq 
    \lVert 
    \mu(t,\vartheta_1)
    -
    \mu(t,\vartheta_2)
    \rVert_1
    +
    \int_0^T
    \lVert 
    \varphi(t-s,x,\vartheta_1)
    -
    \varphi(t-s,x,\vartheta_2)
    \rVert_1
    \frac{1}{n}
    N_{k,s}^{(n)}(\dif s, \dif x)
    \\
    &\leq 
    \varepsilon
    +
    \varepsilon
    \lVert \frac{1}{n} \boldsymbol{N}^{(n)}_T \rVert_1
    =
    \varepsilon( 1 + \lVert H(T,\theta_0) \rVert_1 + o_{\Prob(\theta_0)}(1))
    =
    \varepsilon O_{\Prob(\theta_0)}(1), 
\end{align*}
where $H(\cdot,\cdot)$ is defined in~\eqref{equ:H_function}. Hence $n^{-1} \lambda^{(n)}$ is equi-continuous in the sense of Lemma~\ref{lemma:pointwise_to_uniform}. The equicontinuity of $n^{-1}\Lambda(n,\cdot)$, 
 deduces directly from that of the normalized intensity by applying Heine's Lemma in the same fashion as in the proof of Lemma~\ref{lemma:clinet_like} and remarking again that $\frac{1}{n} N^{(n)} = O_{\Prob(\theta_0)}(1)$. The proof of Lemma~\ref{lemma:sufficient_cv} is then complete, yielding the consistency of the \textsc{mle} as a consequence.
      
\end{proof}

\section{Proof of the weak convergence of the likelihood ratio}
This section contains the proofs for Theorem~\ref{thm:distribution_of_lambda_as_a_projection} and Proposition~\ref{thmTCL}. A classical argument for the weak convergence of the \textsc{mle} relies on the second order expansion
\begin{equation}\label{equ:classic_expansion}
    n^{- \frac{1}{2}} \boldsymbol{\mathcal{S}}(n,\theta_0)
     =
    n^{- \frac{1}{2}} \boldsymbol{\mathcal{S}}(n,\hat{\theta}_n)
    +
    n^{-1}
    \partial_\theta \boldsymbol{\mathcal{S}}(n, \theta_0)
    \sqrt{n} (\hat{\theta}-\theta_0)
    +
    o_{\Prob} \big( \sqrt{n} (\hat{\theta} - \theta_0) \big).
\end{equation}

Under standard conditions, after cancelling the score at $\hat{\theta}_n$, the asymptotic normality of $n^{- \frac{1}{2}} \boldsymbol{\mathcal{S}}(n,\theta_0)$ together with Slutsky's Lemma yields a central limit theorem for the \textsc{mle}. The present setting differs in that $\boldsymbol{\mathcal{S}}(n,\hat{\theta}_n)$ may now take non-null values, pulling apart $\sqrt{n}(\hat{\theta}_n - \theta_0)$ and the score at $\theta_0$.  This leads to $\sqrt{n}(\hat{\theta}_n - \theta_0)$ behaving as the minimizer in $h$ over $\Theta - \{ \theta_0 \}$ of the quadratic form
\begin{equation}\label{equ:quadractic_approximation}
    q_Z \colon h \mapsto (Z-h)^\textsc{t} I(\theta_0)(Z-h)
\end{equation}
with $Z \sim \mathcal{N}(0,I(\theta_0)^{-1})$.  Approximation~\eqref{equ:quadractic_approximation} was introduced by Self \& Liang~\cite{Self-Liang}[Theorem 3 and equ. (3.1) p. 607]. The precise conditions for its validity are detailed in the master theorem for constrained \textsc{m}-estimators of Geyer~\cite{Geyer}[Theorem 4.4 and Remark p. 2004], the verification of which assumptions' constitutes the main objective of the next few results. Notably, we prove in proposition~\ref{prop:score_is_normal_at_true_parameter} the asymptotic normality of the score at the true parameter, extending a result of Chen \& Hall~\cite{FengInferenceForNonStationarySEPP}[equ. (12) p. 1021] to the multivariate case.  \\
\subsection{Technical results}

The limit function $\Bar{\Lambda}$ plays the role of $\mathbb{E} [ n^{-1} \mathcal{L}(n,\cdot) ]$ in Geyer's~\cite{Geyer} setting. Lemma~\ref{lemma:convergence_of_likelihood_derivatives} shows the derivatives of $\Bar{\Lambda}$ express as uniform limits of derivatives of $ n^{-1}  \mathcal{L}(n,\cdot)$. This will serve in the sequel so as to clarify some properties of $\Bar{\Lambda}$. Additionally, we recover the consistency of $\boldsymbol{\mathcal{I}}(n,\hat{\theta}_n)$ as an estimator of $I(\theta_0)$.

\begin{lemma}\label{lemma:convergence_of_likelihood_derivatives}
    Under Assumptions~\ref{ass:baseline} to ~\ref{ass: stability},
    \begin{align}
    \sup_{\theta \in \Theta}
    \lVert 
        \frac{1}{n}
        \boldsymbol{\mathcal{S}}(n,\theta)
        -
         \partial_\theta\Bar{\Lambda}(\theta,\theta_0)
    \lVert_2
    =o_{\Prob(\theta_0)}(1),
    \label{equ:lemma11_1}
    \\
    \sup_{\theta \in \Theta}
    \lVert 
        \frac{1}{n} \partial_\theta
        \boldsymbol{\mathcal{S}}(n,\theta)
        -
        \partial^{\otimes 2}_\theta \Bar{\Lambda}(\theta,\theta_0)
    \lVert_2
    =o_{\Prob(\theta_0)}(1).
    \label{equ:lemma11_2}
    \end{align}

Moreover, using notation~\eqref{equ:empirical_information}  for the empirical information, 
\begin{equation}
        \boldsymbol{\mathcal{I}}(n,\hat{\theta}_n)
        \xrightarrow[n \to \infty]{\Prob(\theta_0)}
        I(\theta_0)
        \label{equ:lemma11_3}
\end{equation}
\end{lemma}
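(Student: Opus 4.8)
The plan is to prove the two uniform convergences~\eqref{equ:lemma11_1} and~\eqref{equ:lemma11_2} by first establishing them pointwise in $\theta$ and then upgrading to uniformity through the stochastic equicontinuity criterion of Lemma~\ref{lemma:pointwise_to_uniform}, exactly as was done for the contrast function in Lemma~\ref{lemma:sufficient_cv}. For the pointwise step I would fix $\theta \in \Theta$ and observe that each integrand appearing in the score and its derivative --- namely $\partial_{\theta_i}\lambda^{(n)}_{k,s}(\theta)/\lambda^{(n)}_{k,s}(\theta)$ for~\eqref{equ:definition_of_score}, and $\partial_{\theta_i}\partial_{\theta_j}\lambda^{(n)}_{k,s}(\theta)/\lambda^{(n)}_{k,s}(\theta)$ together with $\partial_{\theta_i}\lambda^{(n)}_{k,s}(\theta)\,\partial_{\theta_j}\lambda^{(n)}_{k,s}(\theta)/\lambda^{(n)}_{k,s}(\theta)^2$ for~\eqref{equ:score_derivative} --- is a continuous function of the normalised vector $n^{-1}Y^{(n)}_{k,ij}(t,\theta)$ of Lemma~\ref{lemma:clinet_like_derivative}, the ratios being well defined and bounded away from the singular locus $\{\lambda=0\}$ thanks to the strict positivity of the baseline (Assumption~\ref{ass:baseline}) and of $h$. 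Lemma~\ref{lemma:clinet_like_derivative} then delivers the uniform-in-$t$ convergence of these integrands to the corresponding quantities built from $\Bar{\lambda}$.

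To pass to the integrals I would reproduce the three-way split of the proof of Lemma~\ref{lemma:sufficient_cv}: writing $n^{-1}\dif N^{(n)}_{k} = n^{-1}\dif M^{(n)}_{k} + n^{-1}\lambda^{(n)}_{k,s}(\theta_0)\dif s$, the difference between the random integral against $n^{-1}\dif N^{(n)}_k$ and the deterministic integral against $h_k(\cdot,\theta_0)\dif s=\Bar{\lambda}_k(\cdot,\theta_0,\theta_0)\dif s$ decomposes into (i) the integral of the uniformly $o_{\Prob(\theta_0)}(1)$ difference of integrands against $n^{-1}\dif N^{(n)}_k=O_{\Prob(\theta_0)}(1)$; (ii) a martingale integral of the deterministic limiting integrand against $n^{-1}\dif M^{(n)}_k$, which is $o_{\Prob(\theta_0)}(1)$ by the Doob maximal inequality argument of Lemma~\ref{lemma:Doob}; and (iii) an integral against $(n^{-1}\lambda^{(n)}_{k,s}(\theta_0)-h_k(s,\theta_0))\dif s$, which is $o_{\Prob(\theta_0)}(1)$ by Corollary~\ref{coro:N_goes_to_H}. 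The compensator terms $n^{-1}\int_0^T\partial_\theta^{\otimes p}\lambda^{(n)}_{k,s}(\theta)\dif s$ converge by~\eqref{equ:clinet_like_for_identity}. Collecting the limits and differentiating the limit contrast~\eqref{equ:limit_divergence} once and twice identifies them with $\partial_\theta\Bar{\Lambda}(\theta,\theta_0)$ and $\partial_\theta^{\otimes 2}\Bar{\Lambda}(\theta,\theta_0)$ respectively, which gives the pointwise statements.

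The step I expect to be the main obstacle is the upgrade to uniformity, which by Lemma~\ref{lemma:pointwise_to_uniform} reduces to the stochastic equicontinuity in $\theta$ of $n^{-1}\boldsymbol{\mathcal{S}}(n,\cdot)$ and $n^{-1}\partial_\theta\boldsymbol{\mathcal{S}}(n,\cdot)$, the deterministic limits being continuous on the compact $\Theta$. Following the equicontinuity argument at the end of Lemma~\ref{lemma:sufficient_cv}, I would first show that $n^{-1}\lambda^{(n)}$ and its $\theta$-derivatives up to order two are stochastically equicontinuous in $\theta$ uniformly in $t$: the uniform equicontinuity of $\mu,\varphi$ and of $\partial_\theta^{p}\mu,\partial_\theta^p\varphi$ for $p\in\{1,2\}$ (Assumptions~\ref{ass:regularity_of_mu},~\ref{ass:regularity_of_kernel} and~\ref{ass:regularity_of_weight}) bounds the relevant modulus of continuity by $\varepsilon(1+n^{-1}\lVert\boldsymbol{N}^{(n)}_T\rVert_1)=\varepsilon\,O_{\Prob(\theta_0)}(1)$. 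Since the integrands of $\boldsymbol{\mathcal{S}}$ and $\partial_\theta\boldsymbol{\mathcal{S}}$ are smooth functions of $(n^{-1}\lambda^{(n)},n^{-1}\partial_\theta\lambda^{(n)},n^{-1}\partial_\theta^{\otimes 2}\lambda^{(n)})$ on $\{\lambda>0\}$, and since Assumption~\ref{ass:baseline} confines these arguments with probability tending to one to a fixed compact set away from the singularity, Heine's lemma transfers the equicontinuity of the normalised intensity to the score integrands; integrating against $\dif N^{(n)}$ and $\dif s$ and again using $n^{-1}\lVert\boldsymbol{N}^{(n)}_T\rVert_1=O_{\Prob(\theta_0)}(1)$ then yields the equicontinuity of $n^{-1}\boldsymbol{\mathcal{S}}$ and $n^{-1}\partial_\theta\boldsymbol{\mathcal{S}}$, establishing~\eqref{equ:lemma11_1} and~\eqref{equ:lemma11_2}.

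Finally, for~\eqref{equ:lemma11_3} I would run the same pointwise-plus-equicontinuity scheme on the integrand $\partial_{\theta_i}\lambda^{(n)}\,\partial_{\theta_j}\lambda^{(n)}/(\lambda^{(n)})^2$ --- which now carries no compensator term --- to obtain $\sup_{\theta\in\Theta}\lVert n^{-1}\boldsymbol{\mathcal{I}}(n,\theta)-\Tilde{I}(\theta,\theta_0)\rVert_2=o_{\Prob(\theta_0)}(1)$, where $\Tilde{I}_{ij}(\theta,\theta_0)=\sum_{k}\int_0^T \partial_{\theta_i}\Bar{\lambda}_k(s,\theta,\theta_0)\,\partial_{\theta_j}\Bar{\lambda}_k(s,\theta,\theta_0)\,\Bar{\lambda}_k(s,\theta_0,\theta_0)\,\Bar{\lambda}_k(s,\theta,\theta_0)^{-2}\,\dif s$ is continuous in $\theta$ and equals $I(\theta_0)$ at $\theta=\theta_0$ by~\eqref{equ:Fisher_information}. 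Writing $n^{-1}\boldsymbol{\mathcal{I}}(n,\hat\theta_n)-I(\theta_0)=(n^{-1}\boldsymbol{\mathcal{I}}(n,\hat\theta_n)-\Tilde{I}(\hat\theta_n,\theta_0))+(\Tilde{I}(\hat\theta_n,\theta_0)-\Tilde{I}(\theta_0,\theta_0))$, the first bracket is $o_{\Prob(\theta_0)}(1)$ by the uniform convergence just obtained and the second is $o_{\Prob(\theta_0)}(1)$ by continuity of $\Tilde{I}$ together with the consistency $\hat\theta_n\to\theta_0$ of Proposition~\ref{prop:large_sample_consitency}, which gives the convergence of the normalised empirical information in~\eqref{equ:lemma11_3}.
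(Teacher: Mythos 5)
Your proposal is correct and follows essentially the same route as the paper: the paper's own proof merely observes that the score and its derivative write coordinate-wise as functionals of the form $\sum_k\int_0^T U\big(\tfrac{1}{n}Y^{(n)}_{k,ij}\big)\tfrac{1}{n}\dif N^{(n)}_{k}+\int_0^T V\big(\tfrac{1}{n}Y^{(n)}_{k,ij}\big)\dif s$ and states that the convergences ``proceed from Lemma~\ref{lemma:clinet_like_derivative} in the exact same fashion Lemma~\ref{lemma:sufficient_cv} proceeds from Lemma~\ref{lemma:clinet_like}'', which is precisely the pointwise-limit-plus-stochastic-equicontinuity scheme you spell out in detail. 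Your handling of~\eqref{equ:lemma11_3} --- uniform convergence of the (normalised) empirical information to a continuous limit function, combined with the consistency of the \textsc{mle} from Proposition~\ref{prop:large_sample_consitency} --- is also exactly the paper's argument.
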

\begin{proof}
    The normalised score $ n^{-1}\boldsymbol{\mathcal{S}}(n,\cdot)$ and its derivative write coordinate-wise as functionals of the type
    \begin{equation*}
    \sum_{k=1}^K \int_0^T
    U\big(\frac{1}{n}Y^{(n)}_{k,ij}(s,\theta)\big)  \frac{1}{n}\dif N^{(n)}_{k,s}
    +
    \int_0^T
    V\big(\frac{1}{n}Y_{k,ij}^{(n)}(s,\theta)\big) \dif s,
    \end{equation*}
    where $U,V$ are continuous functions and $Y^{(n)}$ is defined in Lemma~\ref{lemma:clinet_like_derivative}. Convergences~\eqref{equ:lemma11_1} and~\eqref{equ:lemma11_2}  then proceed from Lemma~\ref{lemma:clinet_like_derivative} in the exact same fashion Lemma~\ref{lemma:sufficient_cv} proceeds from Lemma~\ref{lemma:clinet_like} and we do not detail their proof. From the same arguments, $\boldsymbol{\mathcal{I}}(n,\cdot)$ converges uniformly in probability towards the asymptotic information function $I(\cdot)$. Together with the consistency of the \textsc{mle} this yields~\eqref{equ:lemma11_3}
\end{proof}

\begin{remark}
    Other possibilities exist for an estimator of $I(\theta_0)$. The one we retained does not require the computation of the limit function $h$ which is often non-explicit.  
\end{remark}

\begin{proposition}\label{prop:score_is_normal_at_true_parameter}
    Under Assumptions~\ref{ass:baseline} to ~\ref{ass: stability},
    \begin{equation}
        \frac{1}{\sqrt{n}} \boldsymbol{\mathcal{S}}(n,\theta_0)
        \xrightarrow[n \to \infty]{\mathcal{L}(\Prob(\theta_0))}
        \mathcal{N}(0,I(\theta_0))
        \label{equ:score_normality}
    \end{equation}
\end{proposition}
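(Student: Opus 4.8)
The plan is to recognise $\tfrac{1}{\sqrt n}\boldsymbol{\mathcal{S}}(n,\theta_0)$ as the terminal value of a locally square-integrable martingale and to conclude by a martingale central limit theorem. First I would rewrite the score at the true parameter as a pure stochastic integral against the compensated measure. Under $\Prob(\theta_0)$ one has $\dif N^{(n)}_{k,s}=\dif M^{(n)}_{k,s}+\lambda^{(n)}_{k,s}(\theta_0)\dif s$, so that in~\eqref{equ:definition_of_score} the predictable drift $\int_0^T\partial_{\theta_i}\lambda^{(n)}_{k,s}(\theta_0)\dif s$ cancels exactly the compensator of the first integral, leaving
\[
\mathcal{S}(n,\theta_0)_i=\sum_{k=1}^K\int_0^T\frac{\partial_{\theta_i}\lambda^{(n)}_{k,s}(\theta_0)}{\lambda^{(n)}_{k,s}(\theta_0)}\,\dif M^{(n)}_{k,s}.
\]
The integrand is predictable and, by Assumption~\ref{ass:baseline}, the denominator is bounded away from zero, so $U^{(n)}_t:=\tfrac{1}{\sqrt n}\big(\sum_{k=1}^K\int_0^t \partial_{\theta_i}\lambda^{(n)}_{k,s}(\theta_0)/\lambda^{(n)}_{k,s}(\theta_0)\,\dif M^{(n)}_{k,s}\big)_{i=1,\dots,d}$ is a well-defined $\R^d$-valued local martingale with $U^{(n)}_T=\tfrac{1}{\sqrt n}\boldsymbol{\mathcal{S}}(n,\theta_0)$.

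Next I would compute the predictable covariation of $U^{(n)}$. By Remark~\ref{remark:diagonal_covariation} the compensated measures satisfy $\langle M^{(n)}_k,M^{(n)}_l\rangle_t=\delta_{kl}\int_0^t\lambda^{(n)}_{k,s}(\theta_0)\dif s$ (the no–simultaneous–jump property kills the cross terms across coordinates), whence
\[
\langle U^{(n)}_i,U^{(n)}_j\rangle_T=\sum_{k=1}^K\int_0^T\frac{\big(\tfrac1n\partial_{\theta_i}\lambda^{(n)}_{k,s}(\theta_0)\big)\big(\tfrac1n\partial_{\theta_j}\lambda^{(n)}_{k,s}(\theta_0)\big)}{\tfrac1n\lambda^{(n)}_{k,s}(\theta_0)}\,\dif s.
\]
Applying the map $(y_0,y_1,y_2,y_3)\mapsto y_1y_2/y_0$, continuous on $(0,\infty)\times\R^3$, to the uniform convergences of Lemma~\ref{lemma:clinet_like_derivative} and~\eqref{equ:clinet_like_for_identity}, the integrand converges uniformly in $s$, so
\[
\langle U^{(n)}_i,U^{(n)}_j\rangle_T\xrightarrow[n\to\infty]{\Prob(\theta_0)}\sum_{k=1}^K\int_0^T\frac{\partial_{\theta_i}\Bar{\lambda}_k(s,\theta_0,\theta_0)\,\partial_{\theta_j}\Bar{\lambda}_k(s,\theta_0,\theta_0)}{\Bar{\lambda}_k(s,\theta_0,\theta_0)}\,\dif s=I(\theta_0)_{ij},
\]
the final identity being precisely the definition~\eqref{equ:Fisher_information}, since $\Bar{\lambda}_k(\cdot,\theta_0,\theta_0)=h_k(\cdot,\theta_0)$ and $\partial_\theta\Bar{\lambda}_k(\cdot,\theta_0,\theta_0)$ is exactly the numerator appearing there.

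It then remains to verify the conditional Lindeberg (jump-negligibility) condition. The jumps of $U^{(n)}$ occur at the event times, and at a jump of coordinate $k$ one has $\lVert\Delta U^{(n)}_s\rVert\le\tfrac{1}{\sqrt n}\sup_{t\in[0,T],\,k}\lVert\partial_\theta\lambda^{(n)}_{k,t}(\theta_0)\rVert/\lambda^{(n)}_{k,t}(\theta_0)$, where the supremum equals $\sup_{t,k}\lVert\tfrac1n\partial_\theta\lambda^{(n)}_{k,t}(\theta_0)\rVert/(\tfrac1n\lambda^{(n)}_{k,t}(\theta_0))$ and is $O_{\Prob(\theta_0)}(1)$ by the same uniform convergence together with Assumption~\ref{ass:baseline}. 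Hence for every $\eps>0$, $\Prob(\theta_0)[\sup_s\lVert\Delta U^{(n)}_s\rVert>\eps]\to0$, so with probability tending to one all jumps lie below $\eps$ and the Lindeberg sums vanish.

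With the predictable covariation converging to the deterministic matrix $I(\theta_0)$ and asymptotically negligible jumps, the multivariate martingale central limit theorem (Rebolledo's theorem) gives $U^{(n)}_T\xrightarrow[n\to\infty]{\mathcal{L}(\Prob(\theta_0))}\mathcal{N}(0,I(\theta_0))$, which is~\eqref{equ:score_normality}. The main obstacle is the identification of the covariation limit with the Fisher information $I(\theta_0)$, which relies entirely on the uniform laws of large numbers established in Lemma~\ref{lemma:clinet_like_derivative} and~\eqref{equ:clinet_like_for_identity}; by contrast, the Lindeberg verification is routine once the $n^{-1/2}$ scaling has collapsed the jumps, and the martingale rewriting is a direct algebraic cancellation.
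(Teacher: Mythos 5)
Your proof is correct and follows essentially the same route as the paper: rewrite the score at $\theta_0$ as a stochastic integral against the compensated measures, use the diagonal covariation of Remark~\ref{remark:diagonal_covariation} to compute $\langle U^{(n)},U^{(n)}\rangle$, identify its limit with $I(\theta_0)$ via the uniform laws of large numbers, and conclude by Rebolledo's martingale CLT. The only cosmetic difference is that you verify the negligible-jumps condition directly through the uniform $O_{\Prob}(n^{-1/2})$ bound on the integrand, whereas the paper checks the equivalent Lindeberg condition via the truncated martingale $\Tilde{M}^{(n)}_\varepsilon$ of Andersen \emph{et al.}; the two formulations are interchangeable here.
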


\begin{proof}
     Under $\Prob(\theta_0)$,  the score $\mathcal{S}(n,\theta_0)$ exhibits a martingale structure :
    \begin{align}
    \frac{1}{\sqrt{n}}
    \boldsymbol{\mathcal{S}}(n,\theta_0)
    &=
    \sum_{k=1}^K
    \int_0^T
        \frac{1}{\sqrt{n}}
        \frac
        {
            \partial^{}_\theta 
            \mu_k(s,\theta_0) 
            +
            \int_0^s \int_{\R^m}
                  \sum_{l}
                  \partial_{\theta}\varphi_{kl}(s-u,x,\theta_0) 
            \frac{1}{n}
            N^{(n)}_l(\dif u,\dif x)
        }
        {
            \mu_k(s,\theta_0) 
            +
            \int_0^s \int_{\R^m}
                \sum_{l}
                \varphi_{kl}(s-u,x,\theta_0)
            \frac{1}{n} N^{(n)}_l(\dif u,\dif x)
    }
    \dif M^{(n)}_{k,s}
    \nonumber \\
    &=
    \sum_{k=1}^K
    \int_0^T
        U^{(n)}_k(s) 
    \dif M^{(n)}_{k,s}, 
    \label{equ:score_at_0}
\end{align}

from which we define the process
\begin{equation*}
    \big( \Tilde{M}^{(n)}_t \big) =
    \big( \sum_{k=1}^K
    \int_0^t
        U^{(n)}_k(s) 
    \dif M^{(n)}_{k,s} \big).
\end{equation*}

 Using remark~\ref{remark:diagonal_covariation}, the predictable co-variation of $\Tilde{M}$ expresses as
    \begin{equation}
        \left\langle 
            \Tilde{M}^{(n)}
            ,
            \Tilde{M}^{(n)}
        \right\rangle
        (t)
        =
        \int_0^t
        \sum_{k=1}^K
            \frac
            {
            \left(
                \partial_{\theta}  \mu_k(s,\theta_0) 
                +
                \int_0^s \int_{\R^m}
                      \sum_{l}
                      \partial_{\theta}\varphi_{kl}(s-u,\theta_0) 
                \frac{1}{n}
                N^{(n)}_l(\dif u,\dif x)
            \right)^{\otimes 2}
            }
            {
                \mu_k(s,\theta_0) 
                +
                \int_0^s \int_{\R^m}
                    \sum_{l}
                    \varphi_{kl}(s-u,\theta_0)
                \frac{1}{n} N^{(n)}_l(\dif u,\dif x)
        }
    \dif s.
    \label{equ:covariation}
    \end{equation}
    
   Thanks to this simplification, expression~\eqref{equ:covariation} is quite similar to the one Chen \& Hall~\cite{FengInferenceForNonStationarySEPP} [equ. 12 p. 1021] or Ogata~\cite{OgataMLE}[Theorem 4 p. 255] obtain for the covariation of $\boldsymbol{\mathcal{S}}(n,\theta_0)$ in the univariate case, hence we may use the same arguments. Namely, it suffices to show that $\Tilde{M}$ verifies a martingale central limit theorem, as introduced by Rebolledo~\cite{Rebolledo} (see also Jacod \& Shiryaev~\cite{jacodbook}[Theorem VIII-3.11]). Define
    \begin{equation*}
        \Tilde{M}^{(n)}_\varepsilon (t)
        =
        \sum_{k=1}^K
        \int_0^t
        U^{(n)}_k(s)
        \mathbb{1}_{
        \lvert 
        U^{(n)}_k \lvert >\varepsilon} \dif M^{(n)}_{k,s},
    \end{equation*}

    where  $\lvert \cdot \lvert $  and  $>$ are to be understood component-wise. The necessary conditions of the martingale central limit theorem can be re-expressed as in Andersen \textit{et al.}  ~\cite{Andersen}[Th. II.5.1 and eq. (2.5.8) p. 83-84]) as
    \begin{align}
    \forall t \in [0,T], \hspace{0.2cm}
        \langle 
            \Tilde{M}^{(n)}
            ,
            \Tilde{M}^{(n)}
        \rangle
        (t)
        &\xrightarrow[n \to \infty]{\Prob(\theta_0)}
        V(t), \label{equ:covariation_convergence_condition}\\
    \forall \varepsilon>0,
    \forall t \in [0,T], \hspace{0.2cm}
        \langle 
        \Tilde{M}^{(n)}_\varepsilon 
        ,
        \Tilde{M}^{(n)}_\varepsilon 
        \rangle
        (t)
        &\xrightarrow[n \to \infty]{\Prob(\theta_0)}
        0. \label{equ:RAJ_condition}
    \end{align}

     Firstly, using Lemma~\ref{lemma:clinet_like} with equation~\eqref{equ:covariation}, condition~\eqref{equ:covariation_convergence_condition} is immediately satisfied with
     \begin{equation*}
     V(t)
     =
         \int_0^t
             \frac
            {
            \left(
            \partial_\theta \mu(s,\theta_0) 
            +
            \int_0^s 
            \partial_\theta 
                \langle F , \varphi \rangle (s-u,\theta_0) h(u,\theta_0) \dif u
            \right)^{\otimes 2}
            }
            {
            \mu(s,\theta_0) 
            +
            \int_0^s 
                \langle F,  \varphi \rangle (s-u,\theta_0) h(u,\theta_0)
            \dif u
            }
         \dif s.
     \end{equation*}

     Secondly, using remark~\ref{remark:diagonal_covariation} again, the predictable co-variation of  $\Tilde{M}^{(n)}_\varepsilon$ simplifies as in~\eqref{equ:covariation},
    \begin{equation*}
    \langle 
    \Tilde{M}^{(n)}_\varepsilon 
    ,
    \Tilde{M}^{(n)}_\varepsilon 
    \rangle (t)
    =
    \sum_{k=1}^K
    \int_0^t
    \frac
            {
            \left(
                \partial_{\theta}  \mu_k(s,\theta_0) 
                +
                \int_0^s \int_{\R^m}
                      \sum_{l}
                      \partial_{\theta}
                      \varphi_ {kl}(s-u,x,\theta_0) 
                \frac{1}{n}
                N^{(n)}_l(\dif u,\dif x)
            \right)^{\otimes 2}
            }
            {
                \mu_ k(s,\theta_0) 
                +
                \int_0^s \int_{\R^m}
                    \sum_{l}
                    \varphi_{kl}(s-u,x,\theta_0)
                \frac{1}{n} N^{(n)}_l(\dif u,\dif x)
        }
    \mathbb{1}_{
        \lvert 
        U^{(n)}_k \lvert >\varepsilon
        } \dif s.
    \end{equation*}

    where the integrand converges to $0$ uniformly in time since for every $k \in \segN{1,K}$,
    \begin{equation*}
        \sup_{t \in [0,T]}\lvert  U_k^{(n)}(t) \rvert  = O_{\Prob(\theta_0)} \big(  \frac{1}{\sqrt{n}} \big).
    \end{equation*}
   as a result of Lemma~\ref{lemma:clinet_like_derivative}. The Lindeberg condition~\eqref{equ:RAJ_condition} is thus satisfied too, and $n^{-\frac{1}{2}}\Tilde{M}^{(n)}$ converges in law for the Skorokhod topology to a Gaussian process with covariance $V$. Since convergence in Skorokhod topology implies convergence of finite-dimensional distributions, setting $t=T$,~\eqref{equ:score_normality} holds as a consequence.   
\end{proof}

We now introduce an extension of Lemma~\ref{lemma:Doob} so as to ensure later the  $\sqrt{n}$-consistency of certain functionals of the intensity.

\begin{lemma}[A uniform extension of Lemma~\ref{lemma:Doob}]\label{lemma:sqrt_consistency_preparation}
    Under assumptions~\ref{ass:baseline} to~\ref{ass: stability},,

        \begin{equation}\label{equ:bigO}
        \sup_{\theta \in \Theta} \sup_{t \in [0,T]}
        \Big\lVert
        \int_{[0,t] \times \R^m}
            g(x,\theta) 
        \frac{1}{\sqrt{n}}\boldsymbol{M}^{(n)}(\dif u, \dif x)
        \Big\rVert_1
        =
        O_{\Prob(\theta_0)}(1)
    \end{equation}

\end{lemma}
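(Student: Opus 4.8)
The plan is to deduce the uniform-in-$\theta$ estimate from the pointwise estimate of Lemma~\ref{lemma:Doob} by means of the stochastic equicontinuity criterion of Lemma~\ref{lemma:pointwise_to_uniform}. Writing
\[
    Z_n(\theta) = \sup_{t \in [0,T]} \Big\lVert \int_{[0,t] \times \R^m} g(x,\theta)\frac{1}{\sqrt{n}}\boldsymbol{M}^{(n)}(\dif u,\dif x)\Big\rVert_1 ,
\]
the second assertion of Lemma~\ref{lemma:Doob} (with the $\ell^2$ norm replaced by the equivalent $\ell^1$ norm on $\R^K$) already gives $Z_n(\theta) = O_{\Prob(\theta_0)}(1)$ for each fixed $\theta$. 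By Lemma~\ref{lemma:pointwise_to_uniform}, applied with index set $\Theta$, it therefore suffices to verify the stochastic equicontinuity condition~\eqref{equ:a_stochastic_continuity_condition_for_domination} for the family $(Z_n(\cdot))$ over the compact $\Theta$.

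To control the oscillation of $Z_n$ in $\theta$, I would invoke the chaining Lemma~\ref{lemma:chaining}. By the reverse triangle inequality, $\lvert Z_n(\theta_1)-Z_n(\theta_2)\rvert \leq \sup_{t}\lVert \Delta_t(\theta_1,\theta_2)\rVert_1$, where
\[
    \Delta_t(\theta_1,\theta_2) = \int_{[0,t]\times\R^m}\big(g(x,\theta_1)-g(x,\theta_2)\big)\frac{1}{\sqrt{n}}\boldsymbol{M}^{(n)}(\dif u,\dif x)
\]
is, coordinate-wise, a martingale in $t$. I would then establish the increment condition~\eqref{equ:chaining_decay_condition}, with the metric $d(\theta_1,\theta_2)=\lVert\theta_1-\theta_2\rVert_2$, by applying van de Geer's inequality (Lemma~\ref{lemma:concentration}) to each coordinate $\Delta_{k,\cdot}$. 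Using Remark~\ref{remark:diagonal_covariation}, its predictable variation is $\frac{1}{n}\sum_{p}\int_0^t\int_{\R^m}(g_{kp}(x,\theta_1)-g_{kp}(x,\theta_2))^2\lambda^{(n)}_{p,s}(\theta_0)\dif s\,F_p(\dif x)$. On the high-probability event where $n^{-1}\lambda^{(n)}(\theta_0)$ is uniformly close to $h(\cdot,\theta_0)$ by Corollary~\ref{coro:N_goes_to_H}, and using that the differentiability of $g$ together with $g\in L^2(F(\dif x))$ yields $\int_{\R^m}(g_{kp}(x,\theta_1)-g_{kp}(x,\theta_2))^2 F_p(\dif x)\leq C\lVert\theta_1-\theta_2\rVert_2^2$, this variation is dominated by $b^2 := C'\lVert\theta_1-\theta_2\rVert_2^2$. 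The jumps of $\Delta_{k,\cdot}$ are bounded by $\kappa := C''\lVert\theta_1-\theta_2\rVert_2/\sqrt{n}$ from the Lipschitz dependence of $g$ on $\theta$ (finite after truncating the marks, per the remark following Assumption~\ref{ass:regularity_of_weight}). Lemma~\ref{lemma:concentration} with $a=\eta\lVert\theta_1-\theta_2\rVert_2$ then delivers a bound of Bernstein type whose denominator is of order $\lVert\theta_1-\theta_2\rVert_2^2$, the vanishing jump contribution being negligible for large $n$, so that~\eqref{equ:chaining_decay_condition} holds with a constant $D$ uniform in $n$.

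Feeding this into Lemma~\ref{lemma:chaining}, and using that the covering number of the compact $\Theta\subset\R^d$ satisfies $N(u)=O(u^{-d})$, so that the covering integral $J(\delta)\to 0$ as $\delta\to 0$, conclusion~\eqref{equ:chaining_decay_result} provides, for every $\varepsilon>0$, a radius $\delta$ with $\limsup_n\Prob\big[\sup_{\lVert\theta_1-\theta_2\rVert_2<\delta}\lvert Z_n(\theta_1)-Z_n(\theta_2)\rvert> 26 D\, J(\delta)\big]\leq 2\varepsilon$; shrinking $\delta$ makes $26 D\, J(\delta)<\eta$, which is precisely~\eqref{equ:a_stochastic_continuity_condition_for_domination}. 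Lemma~\ref{lemma:pointwise_to_uniform} then upgrades the pointwise estimate to the claimed uniform bound~\eqref{equ:bigO}. The main obstacle I anticipate is verifying the chaining increment condition with a constant $D$ \emph{independent of} $n$: this demands controlling the predictable variation simultaneously through the law of large numbers for $n^{-1}\lambda^{(n)}$ and the $L^2(F)$-Lipschitz control of $g$ in $\theta$, while ensuring that the $O(1/\sqrt{n})$ jump term does not spoil the sub-Gaussian decay required for the covering integral to converge.
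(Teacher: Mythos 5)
Your proposal follows the paper's proof essentially step for step: the pointwise bound from Lemma~\ref{lemma:Doob}, the reduction to stochastic equicontinuity via Lemma~\ref{lemma:pointwise_to_uniform}, the reverse triangle inequality to pass to the difference martingale, and the chaining Lemma~\ref{lemma:chaining} fed by van de Geer's inequality (Lemma~\ref{lemma:concentration}) with the predictable variation controlled through the law of large numbers for the integrated intensity. The one inaccuracy is your claim that differentiability of $g$ together with $g \in L^2(F(\dif x))$ yields the $L^2$-Lipschitz bound $\int_{\R^m}(g_{kp}(x,\theta_1)-g_{kp}(x,\theta_2))^2 F_p(\dif x)\leq C\lVert\theta_1-\theta_2\rVert_2^2$ --- pointwise differentiability gives no uniform-in-$x$ control; the correct source is the uniform $p^*$-H\"older condition of Assumption~\ref{ass:regularity_of_weight} with $p^*\in(0,1]$ possibly strictly less than $1$, which is why the paper runs the chaining with the modified metric $d_{p^*}(\theta,\nu)=\lVert\theta-\nu\rVert_2^{p^*}$ rather than the Euclidean one.
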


\begin{proof}[Proof of lemma~\ref{lemma:sqrt_consistency_preparation}] Recall from Lemma~\ref{lemma:Doob} that~\eqref{equ:bigO} already holds pointwise in $\theta$. Consider the family of processes $\{ Z^{(n)}(\theta) \}$ defined for any $t \in [0,T]$ and $ \theta \in \Theta$ by

\begin{equation}\label{equ:family_to_bound}
        Z^{(n)}_t(\theta)
        =
        \int_{[0,t] \times \R^m}
        g(x,\theta) \frac{1}{\sqrt{n}} \boldsymbol{M}^{(n)} (\dif s, \dif x).
\end{equation}

In view of Lemma~\ref{lemma:pointwise_to_uniform}, it thus suffices to show $\{ \sup_{t \in [0,T]} \lVert Z^{(n)}_t(\theta) \rVert_1 \}$  is stochastically equi-continuous in $\theta$ for Lemma~\ref{lemma:sqrt_consistency_preparation} to hold. This will derive from the chaining argument detailed in Lemma~\ref{lemma:chaining}, for which we require some concentration inequality on increments along $\theta$ of the family~\eqref{equ:family_to_bound}. We will work with the metrics $d_p : \Theta^2 \to \R^+ $ defined for any $p \in (0,1]$ by 
\begin{equation*}
    d_p(\theta,\nu) = \lVert \theta - \nu \lVert_2^p.
\end{equation*}

Now, from the reverse triangle inequality, for any $\theta,\nu \in \Theta$,
\begin{equation}\label{equ:reverse_triangular}
    \big\lvert
    \sup_{t \in [0,T]}
    \lVert 
    Z^{(n)}_t(\theta)
    \lVert_1 
    \hspace{0.1cm}
    -
    \sup_{t \in [0,T]}
    \lVert 
    Z^{(n)}_t(\nu)
    \rVert_1 
    \big\lvert
    \leq 
    \sup_{t \in [0,T]}
    \lVert 
    Z^{(n)}_t(\theta)
    -
    Z^{(n)}_t(\nu)
    \rVert_1,
\end{equation}

and, as far as tail bounds are concerned, we can look instead at the family of processes 
\begin{align*}
    Z^{(n)}_t(\theta,\nu)
    & =
    \int_{[0,t] \times \R^m}
        (g(x,\theta) - g(x,\nu))
    \frac{1}{\sqrt{n}}
    \boldsymbol{M}^{(n)}(\dif s, \dif x).
\end{align*}

 Note also that for any  $n \in \mathbb{N}$, $\theta,\nu \in \Theta$ and $\eta>0$,
\begin{equation}\label{equ:sum_bound}
    \Prob\big[ \sup_{t \in [0,T]} \lVert Z^{(n)}_t(\theta,\nu) \rVert_1 > \eta d_p(\theta,\nu) \big]
    \leq 
    \sum_{k=1}^K 
   \Prob\big[ \sup_{t \in [0,T]} \lvert Z^{(n)}_{k,t}(\theta,\nu) \rvert > \frac{\eta}{K} d_p(\theta,\nu) \big]
\end{equation}

and we can study each summand in the right-hand side of~\eqref{equ:sum_bound} separately to  alleviate the notation. For any $n\in \mathbb{N}^\star$, any $\theta,\nu \in \Theta$ and any $k \in \segN{1,K}$, from Remark~\ref{remark:diagonal_covariation}, the predictable variation of $(Z^{(n)}_{k,t}(\theta,\nu))$ writes
\begin{equation*}
     \sum_{l=1}^K \int_{\R^m} (g_{kl}(x,\theta) - g_{kl} (x,\nu))^2 F_l(\dif x) 
     \frac{1}{n} \int_0^t \lambda_{l,s}^{(n)} (\theta_0)  \dif s,
\end{equation*}

and is hence bounded uniformly in $t$ by
\begin{equation*}
    \sum_{i,j}
    \int_{\R^m}
        \lvert 
        g_{ij}(x, \theta) 
        -
        g_{ij}(x, \nu)
        \lvert^2
    F_j(\dif x)
    \frac{1}{n}\sum_{k=1}^K
    \int_0^T \lambda_{k,s}^{(n)} (\theta_0) \dif s.
\end{equation*}

Firstly,  under Assumption~\ref{ass:regularity_of_weight}, $x,\theta \mapsto g(x,\theta)$ is uniformly $p^*$-Hölder in $\theta$ for some $p^* \in (0,1]$, and there is some $C>0$ such that
\begin{equation}\label{equ:we_use_holder}
    \sum_{i,j}
    \int_{\R^m}
        \lvert 
        g_{ij}(x, \theta) 
        -
        g_{ij}(x, \nu)
        \lvert^2
    F_j(\dif x)
    \leq 
    C^2 \lVert \theta - \nu \lVert_2^{2p^*}
    =
    C^2 d_{p^*}(\theta,\nu)^2.
\end{equation}

Secondly, for any $\zeta>0$, from the continuity of the $\ell^1$ norm and the classical law of large numbers~\eqref{equ:law_of_large_numbers},  we have for any sufficiently large $n$ that, $\Prob(\theta_0)$-a.s,
\begin{equation}
\label{equ:LLN_in_use}
     \frac{1}{n}\sum_{k=1}^K
     \int_0^T \lambda_{k,s}^{(n)} (\theta_0)  \dif s
     \leq (1+\zeta) 
     \lVert H(T,\theta_0) \lVert_1.
\end{equation}

Setting $\zeta$ to some fixed value, say, $\zeta=1$, and then using~\eqref{equ:we_use_holder} and~\eqref{equ:LLN_in_use} together, one has some $N\in \mathbb{N}^\star$ such that, for any $n \geq N$,  any $t \in [0,T]$ and any $\theta,\nu \in \Theta$,
\begin{equation}\label{equ:covariation_bound} 
    \big\langle 
        Z_{k,\cdot}^{(n)} (\theta,\nu)
        ,
        Z_{k,\cdot}^{(n)}(\theta,\nu)
    \big\rangle_t
    \leq 
    2 C^2 
    \lVert 
    H(T,\theta_0)
    \lVert_1
    d_{p^*}(\theta,\nu)^2
    =
    \mathcal{K}
    d_{p^*}(\theta,\nu)^2
\end{equation}

$\Prob(\theta_0)$-a.s. Furthermore, the jumps of $(Z_{k,t}^{(n)}(\theta,\nu))$ are bounded by 
\begin{equation}\label{equ:jump_bound}
    \lVert g(x,\theta) - g(x,\theta) \lVert_1 
    \leq 
    C \lVert \theta - \nu \lVert_2^p 
    =
    C d_{p^*}(\theta,\nu).
\end{equation}

Applying the concentration inequality of Lemma~\ref{lemma:concentration} in light of~\eqref{equ:covariation_bound} and~\eqref{equ:jump_bound},  for any sufficiently large $n$, one has for any $\eta > 0$ and any $\theta,\nu \in \Theta$,
\begin{align*}
    \Prob
    \big[ \sup_{t \in [0,T]}
    \lvert 
        Z^{(n)}_{k,t}(\theta,\nu)
    \lvert 
    >
    \frac{1}{K}\eta d_{p^*}(\theta,\nu) 
    \big]
    &\leq 
    2 
    \exp 
    \Big[
        - 
        \frac{1}{2}
        \frac{\eta^2 d_{p^*}(\theta,\nu)^2}{ K^2( C d_{p^*}(\theta,\nu)^2 
        +
        \mathcal{K} d_{p^*}(\theta,\nu)^2) } 
        \Big]\\
        &\leq 
    2 \exp 
    \Big[
        - 
        \frac{1}{2}
        \frac{\eta^2 }
        { K^2(C 
        +
        \mathcal{K}) } 
        \Big].
\end{align*}

Taking $D^2  =K^2 \max ( C, \mathcal{K})$, summing over $k$ in~\eqref{equ:sum_bound}, and recalling~\eqref{equ:reverse_triangular} then,  for any large enough $n$, for any  $\eta>0$ one has for any $\theta, \nu \in \Theta$,
\begin{equation*}
    \Prob 
    \big[ 
    \lvert
    \sup_{t \in [0,T]}
    \lVert 
        Z^{(n)}_{t}(\theta)
    \lVert_1 
    -
    \sup_{t \in [0,T]}
    \lVert 
        Z^{(n)}_{t}(\nu)
    \lVert_1 
    \lvert 
    >
    \eta d_{p^*}(\theta,\nu) 
    \big]
    \leq 
    2 K 
    \exp 
    \Big[
        - 
        \frac{1}{2}
        \frac{\eta^2 }{ D^2  } 
        \Big].
\end{equation*}

Apart from the unimportant factor $K$, this is the sub-exponential bound we needed to apply the chaining Lemma~\ref{lemma:chaining}. Since we work with parametric families indexed on the compact $\Theta$, the finiteness of the covering integral is immediate in our case. Together with Lemma~\ref{lemma:pointwise_to_uniform}, the chaining Lemma~\ref{lemma:chaining} therefore yields Lemma~\ref{lemma:sqrt_consistency_preparation}.
\end{proof}

\begin{corollary}[A consequence of Lemma~\ref{lemma:sqrt_consistency_preparation}]\label{coro:sqrt_consistency_coro}
For any $p,m \in \{0,1\}$ and any $i,j \in \segN{1,K}$
\begin{equation*}
    \sup_{\theta \in \Theta }
    \sup_{t \in [0,T] }
    \big\lVert 
    \sqrt{n} \big\{
    \partial^p_{\theta_i}
    \partial^m_{\theta_j}
    \frac{1}{n} \lambda_{s}^{(n)}(\theta) 
    -
     \partial^p_{\theta_i}
    \partial^m_{\theta_j}
    \Bar{\lambda}(s,\theta,\theta_0)
    \big\}
    \big\rVert_2
    =
    O_{\Prob(\theta_0)}(1)
\end{equation*}
\end{corollary}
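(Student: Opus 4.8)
The plan is to differentiate the intensity decomposition of Lemma~\ref{lemma:intensity_decomposition} in $\theta$ and then reduce the problem to the one already solved in Lemma~\ref{lemma:sqrt_consistency_preparation}, the only novelty being that derivatives of $g$ now appear in the integrand. Fixing $p,m\in\{0,1\}$ and $i,j\in\segN{1,K}$ and abbreviating $\partial=\partial^p_{\theta_i}\partial^m_{\theta_j}$, I would start from~\eqref{intensity_decomposition}, which after multiplication by $\sqrt{n}$ reads
\[
    \sqrt{n}\Big\{ \tfrac{1}{n}\lambda^{(n)}_t(\theta) - \Bar{\lambda}(t,\theta,\theta_0) \Big\}
    =
    \int_{[0,t)\times\R^m} \Psi(t-s,x,\theta_0,\theta)\, \tfrac{1}{\sqrt{n}}\boldsymbol{M}^{(n)}(\dif s,\dif x),
\]
and differentiate both sides in $\theta$. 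For each realisation, $\boldsymbol{M}^{(n)}=\boldsymbol{N}^{(n)}-\boldsymbol{\Lambda}^{(n)}$ is a finite signed measure on the compact $[0,t]\times\R^m$ whose law does not depend on $\theta$, while $\theta\mapsto\Psi(s,x,\theta_0,\theta)$ is twice continuously differentiable with $\theta$-derivatives locally dominated uniformly in $(s,x)$ under Assumptions~\ref{ass:regularity_of_kernel} and~\ref{ass:regularity_of_weight}. Dominated convergence then lets me move $\partial$ inside the integral, giving
\[
    \sqrt{n}\Big\{ \tfrac{1}{n}\partial\lambda^{(n)}_t(\theta) - \partial\Bar{\lambda}(t,\theta,\theta_0) \Big\}
    =
    \int_{[0,t)\times\R^m} \partial\Psi(t-s,x,\theta_0,\theta)\, \tfrac{1}{\sqrt{n}}\boldsymbol{M}^{(n)}(\dif s,\dif x).
\]

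Next I would exploit the separability~\eqref{equ:separability_psi}, namely $\Psi(s,x,\theta_0,\theta)=g(x,\theta)\odot u(s,\theta,\theta_0)$ with $u(\cdot,\theta,\theta_0)\in\mathcal{C}^1(\R^+)$. By Leibniz' rule, $\partial\Psi$ is a finite sum of terms of the form $\partial^a_\theta g(x,\theta)\odot w(s,\theta,\theta_0)$ with $a\in\{0,1,2\}$ and $w(\cdot,\theta,\theta_0)\in\mathcal{C}^1(\R^+)$, the continuity in $t$ of the time-factors surviving $\theta$-differentiation thanks to Assumption~\ref{ass:regularity_of_kernel}. Integrating each term by parts in $s$ exactly as in the proof of Corollary~\ref{coro:N_goes_to_H} isolates the $x$-dependence into a factor $\int_{[0,t]\times\R^m}\partial^a_\theta g(x,\theta)\tfrac{1}{\sqrt{n}}\boldsymbol{M}^{(n)}(\dif u,\dif x)$ paired with a deterministic $\mathcal{C}^1$ time-kernel. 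Consequently the left-hand side is bounded, uniformly in $(t,\theta)$, by a finite combination of $\sup_{t,\theta}\lVert \int_{[0,t]\times\R^m}\partial^a_\theta g(x,\theta)\tfrac{1}{\sqrt{n}}\boldsymbol{M}^{(n)}(\dif u,\dif x)\rVert$ over $a\le 2$.

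Finally I would control each of these suprema. For $a=0$ this is precisely Lemma~\ref{lemma:sqrt_consistency_preparation}. For $a\in\{1,2\}$ the proof of that lemma transfers verbatim with $\partial^a_\theta g$ in place of $g$, since Assumption~\ref{ass:regularity_of_weight} supplies exactly the uniform $p^\ast$-Hölder bound on $\partial^a_\theta g$ that the chaining argument needs for both the covariation estimate~\eqref{equ:covariation_bound} and the jump bound~\eqref{equ:jump_bound}; the chaining Lemma~\ref{lemma:chaining} together with Lemma~\ref{lemma:pointwise_to_uniform} then yield an $O_{\Prob(\theta_0)}(1)$ bound uniform in $(t,\theta)$, and summing the finitely many contributions closes the argument. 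I expect the main obstacle to be the first step: justifying the interchange of $\partial_\theta$ with the stochastic integral against $\boldsymbol{M}^{(n)}$ and, above all, verifying that the differentiated integrands $\partial^a_\theta g$ still satisfy every hypothesis needed to re-run Lemma~\ref{lemma:sqrt_consistency_preparation} — square-integrability against $F(\dif x)$ and the uniform Hölder control — which is precisely where the higher-order regularity Assumption~\ref{ass:regularity_of_weight} enters.
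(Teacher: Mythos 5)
Your proposal is correct and follows essentially the same route as the paper: both reduce the statement, via the separability~\eqref{equ:separability_psi} of $\Psi$ and the integration by parts of Corollary~\ref{coro:N_goes_to_H}, to the uniform martingale bound of Lemma~\ref{lemma:sqrt_consistency_preparation}. The paper only writes out the case $p=m=0$ and dismisses the derivative cases as ``following from the same arguments,'' whereas you spell out precisely what makes them work --- that Assumption~\ref{ass:regularity_of_weight} supplies the uniform H\"older control on $\partial^a_\theta g$ needed to re-run the chaining argument with $\partial^a_\theta g$ in place of $g$.
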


\begin{proof}
    We only consider the configuration $p=m=0$ since the proof for other cases follow from the same arguments.
    Consider again the bound~\eqref{equ:intensity_bound} from the proof of Corollary~\ref{coro:N_goes_to_H}, whence we have some $u \colon t, \vartheta,\theta \mapsto u(t,\vartheta,\theta) \in \mathcal{M}_K(\R)$ given by
    \begin{equation*}
        u(t,\theta,\theta_0)
        =
        f(t,\theta) +
        f(\cdot,\theta)
        \star 
        \langle F, \Psi \rangle(\cdot,\theta_0,\theta_0 ),
    \end{equation*}
    such that $u(\cdot,\theta,\theta_0) \in \mathcal{C}^1([0,T])$ for any $\theta \in \Theta$ and
    \begin{equation}\label{equ:intensity_bound_2}
    \sqrt{n}\big\lVert 
    \frac{1}{n}
    \lambda^{(n)}_t(\theta)
    -
    \Bar{\lambda}(t,\theta,\theta_0)
    \big\rVert_2
    \leq 
    C \sup_{\theta \in \Theta} \sup_{t \in [0,T]}
    \big\lVert
    \int_{[0,t] \times \R^m}
    g(x,\theta) \frac{1}{\sqrt{n}} \boldsymbol{M}^{(n)}(\dif u,\dif x)
    \big\rVert_2,
    \end{equation}
    where
    \begin{equation*}
        C = \sup_{\theta \in \Theta}  \Big( \lVert u(0,\theta,\theta_0) \rVert_2
        +
        \int_0^T \lVert u'(s,\theta,\theta_0) \rVert_2 \dif s
        \Big)
    \end{equation*}
    is finite under assumptions~\ref{ass:f_continuity} and~\ref{ass:regularity_of_kernel}.  Applying Lemma~\ref{lemma:sqrt_consistency_preparation} to~\eqref{equ:intensity_bound_2} yields Corollary~\ref{coro:sqrt_consistency_coro}. 
\end{proof}

\begin{corollary}[A consequence of Lemmata~\ref{lemma:sqrt_consistency_preparation} and~\ref{lemma:clinet_like_derivative}]\label{coro:more_stuff}
Let $ V,W \in \mathcal{C}^0(\R) $, $k \in \segN{1,K}$  $i,j \in \segN{1,d}$, and
\begin{equation*}
    U^{(n)}_{k,ij}(t,\theta) =     V \Big( \frac{1}{n} Y^{(n)}_{k,ij}(t,\theta)\Big)
    W \Big(  \Bar{Y}_{k,ij}(t,\theta,\theta_0)\Big),
\end{equation*}

where we have used the notation of Lemma~\ref{lemma:clinet_like_derivative}. Then, for any $q,r \in \{0,1\}$,
\begin{equation*}
    \sup_{\theta \in \Theta} \sup_{t \in [0,T]}
    \Big\lvert
    \int_0^t
    U_{k,ij}^{(n)}(s,\theta)
    \sqrt{n} 
        \big\{ 
        \frac{1}{n} \partial^q_{\theta_i}
    \partial^r_{\theta_j}
        \lambda_{k,t}^{(n)}(\theta) 
        -
        \partial^q_{\theta_i}
    \partial^r_{\theta_j} \Bar{\lambda}_{k}(t,\theta,\theta_0)
        \big\} 
        \frac{1}{n}  \dif N_{k,s}^{(n)}
    \Big\rvert
    =
    O_{\Prob(\theta_0)}(1)
\end{equation*}
\end{corollary}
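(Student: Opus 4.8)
The plan is to factor the integrand into three pieces, each of which is $O_{\Prob(\theta_0)}(1)$ uniformly in $(t,\theta)$, and to bound the total mass of the integrating measure by the already-controlled quantity $\frac{1}{n}N^{(n)}_{k,T}$. Write
\begin{equation*}
    A^{(n)}_{k}(s,\theta) = \sqrt{n}\Big\{ \tfrac{1}{n}\partial^q_{\theta_i}\partial^r_{\theta_j}\lambda^{(n)}_{k,s}(\theta) - \partial^q_{\theta_i}\partial^r_{\theta_j}\Bar{\lambda}_k(s,\theta,\theta_0)\Big\},
\end{equation*}
so that the supremum to control is bounded above by
\begin{equation*}
    \Big(\sup_{s\in[0,T],\,\theta\in\Theta}\big\lvert U^{(n)}_{k,ij}(s,\theta)\big\rvert\Big)\Big(\sup_{s\in[0,T],\,\theta\in\Theta}\big\lvert A^{(n)}_{k}(s,\theta)\big\rvert\Big)\,\frac{1}{n} N^{(n)}_{k,T}.
\end{equation*}
It thus suffices to show that each of these three factors is $O_{\Prob(\theta_0)}(1)$.

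The second factor is handled directly by Corollary~\ref{coro:sqrt_consistency_coro}: the scalar $\lvert A^{(n)}_k\rvert$ is dominated by the $\ell^2$-norm of the corresponding $\R^K$-valued quantity, whence $\sup_{s,\theta}\lvert A^{(n)}_k(s,\theta)\rvert = O_{\Prob(\theta_0)}(1)$. The third factor is $O_{\Prob(\theta_0)}(1)$ by the classical law of large numbers~\eqref{equ:law_of_large_numbers}, since $\frac{1}{n}N^{(n)}_{k,T}$ converges in probability to $\lVert H(T,\theta_0)\rVert_1$, which is finite under Assumption~\ref{ass: stability}.

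The first factor is the only one requiring care. Since $W$ is continuous and $(t,\theta)\mapsto\Bar{Y}_{k,ij}(t,\theta,\theta_0)$ is continuous on the compact set $[0,T]\times\Theta$, its image is compact and $\sup_{s,\theta}\lvert W(\Bar{Y}_{k,ij}(s,\theta,\theta_0))\rvert$ is a finite deterministic constant. For the factor $V(\tfrac{1}{n}Y^{(n)}_{k,ij})$ I argue as in the proof of Lemma~\ref{lemma:clinet_like}: by Assumption~\ref{ass:baseline} the leading coordinate $\tfrac{1}{n}\lambda^{(n)}_{k,s}(\theta)$ remains bounded away from $0$, while by Lemma~\ref{lemma:clinet_like_derivative} the vector $\tfrac{1}{n}Y^{(n)}_{k,ij}(s,\theta)$ is uniformly in $(s,\theta)$ within $o_{\Prob(\theta_0)}(1)$ of $\Bar{Y}_{k,ij}(s,\theta,\theta_0)$. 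Hence, with probability tending to $1$, the argument of $V$ lies in a fixed compact subset of $(0,\infty)\times\R^3$ on which the continuous function $V$ is bounded; therefore $\sup_{s,\theta}\lvert V(\tfrac{1}{n}Y^{(n)}_{k,ij}(s,\theta))\rvert = O_{\Prob(\theta_0)}(1)$, and the product $\sup_{s,\theta}\lvert U^{(n)}_{k,ij}\rvert = O_{\Prob(\theta_0)}(1)$.

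Since a product of three $O_{\Prob(\theta_0)}(1)$ terms is itself $O_{\Prob(\theta_0)}(1)$, the corollary follows. The main obstacle is the uniform boundedness of $U^{(n)}_{k,ij}$, namely ensuring that the random argument of $V$ does not escape a compact set on which $V$ is bounded; this is precisely what the positivity of the intensity together with the uniform convergence of Lemma~\ref{lemma:clinet_like_derivative} secures. The cases $(q,r)\neq(0,0)$ are identical, relying on the corresponding higher-order instances of Corollary~\ref{coro:sqrt_consistency_coro} and Lemma~\ref{lemma:clinet_like_derivative}.
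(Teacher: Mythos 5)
Your proof is correct and follows essentially the same route as the paper: bound the integral by the product of $\sup_{s,\theta}\lvert U^{(n)}_{k,ij}\rvert$, the uniform $\sqrt{n}$-deviation controlled by Lemma~\ref{lemma:sqrt_consistency_preparation} (via Corollary~\ref{coro:sqrt_consistency_coro}), and $\tfrac{1}{n}N^{(n)}_{k,T}$ controlled by the law of large numbers. The only difference is that you spell out the compactness argument for the boundedness of $V$ and $W$, which the paper leaves implicit by citing Lemma~\ref{lemma:clinet_like}.
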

\begin{proof}
 For any $t \in [0,T]$, $n \in \mathbb{N}^\star$ and $\theta \in \Theta$, the expression in Corollary~\ref{coro:more_stuff} is bounded by the product of the three terms
 \begin{align*}
     \sup_{\theta \in \Theta, t \in [0,T] }  \big\lvert U_{k,ij}^{(n)}(t,\theta) \big)  \big\rvert
    \sup_{\theta \in \Theta, t \in [0,T] }
    \sqrt{n}  \big\lVert 
        \frac{1}{n} \partial^p_{\theta_i}
    \partial^m_{\theta_j}
        \lambda_t^{(n)}(\theta) 
        -
        \partial^p_{\theta_i}
    \partial^m_{\theta_j} \Bar{\lambda}(t,\theta,\theta_0)
    \big\rVert_1
    \frac{1}{n} \lVert \boldsymbol{N}^{(n)}_T \lVert_1,
 \end{align*}
 which are all $O_{\Prob(\theta_0)}(1)$ as a result of Lemma~\ref{lemma:clinet_like}, Lemma~\ref{lemma:sqrt_consistency_preparation} and the classical law of large numbers, respectively.
\end{proof}

\subsection{Proof of Theorem~\ref{thm:distribution_of_lambda_as_a_projection}}

     We are now ready to prove Theorem~\ref{thm:distribution_of_lambda_as_a_projection}. As mentioned in the introduction, we plan to apply Theorem 4.4 of Geyer~\cite{Geyer}[p. 2003], which provides sufficient conditions for the epi-convergence in distribution of random functions.   More precisely we require a set of four conditions to be verified.\\

    \textit{First condition.} The limit function $\Bar{\Lambda}$  is twice differentiable and satisfies    \begin{equation}\label{equ:limit_gradient_is_null_at_true_parameter}
        \Bar{\Lambda}(\theta) = (\theta- \theta_0)^\textsc{t} V (\theta - \theta_0) +o ( \lVert \theta- \theta_0 \lVert^2)
    \end{equation}

    for some positive definite $V \in \mathcal{M}_d(\R)$. The regularity conditions above are immediately verified from $\Bar{\Lambda}$ being a uniform limit of twice continuously differentiable functions, with the non degeneracy of $V=I(\theta_0)$ guaranteed by Assumption~\ref{ass: Fisher_information_is_positive}. It remains to show the gradient of $\Lambda$ vanishes at $\Bar{\Lambda}$, which does not come naturally as $\theta$ may lie outside of the interior of $\Theta$. An immediate application of Proposition~\ref{prop:score_is_normal_at_true_parameter} does however yield, with probability tending to $1$,
    \begin{equation*}
        \boldsymbol{\theta_0}
        =
        \partial_\theta
        \Bar{\Lambda}(\theta_0) 
        =
        \lim_{n \to \infty} \frac{1}{\sqrt{n}} \Big( \frac{1}{\sqrt{n}}
        \boldsymbol{\mathcal{S}}(n,\theta_0) \Big)  = 0.
    \end{equation*}
    Condition~\eqref{equ:limit_gradient_is_null_at_true_parameter} hence holds true.\\

    \textit{Second condition.}  For any $n \in \mathbb{N}^\star$ and  $\theta$ in a neighbourhood of $\theta$, 
    \begin{equation}\label{equ:remainder_condition}
         \Lambda_n(\theta_0,\theta) 
        =
        \boldsymbol{D}_n^{\textsc{t}}
        (\theta - \theta_0) 
        + 
        \boldsymbol{R}_n(\theta)^\textsc{t}(\theta - \theta_0),
    \end{equation}   
    where the remainder $\boldsymbol{R}_n(\theta)$ is stochastically equicontinuous in the sense that, denoting by $\Bar{\boldsymbol{R}}$ the limit in probability under $\Prob(\theta_0)$ of $n^{-1} \boldsymbol{R}_n$, for any $\varepsilon>0$ and $\eta>0$ we may find a neighbourhood $K$ of $\theta_0$ such that
\begin{equation}\label{equ:equicontinuity_condition_for_tcl}
        \lim_{n \to \infty} 
        \Prob\Big[
            \sup_{\theta \in K} \sqrt{n}
            \big\lVert 
                \frac{1}{n}\boldsymbol{R}_n(\theta) 
                - 
                \boldsymbol{R}(\theta)
            \big\rVert
            >  
            \eta 
        \Big]
        \leq
        \varepsilon.
    \end{equation}

    The mean value theorem (or Lagrange form of the Taylor remainder) provides an expression for expansion~\eqref{equ:remainder_condition} with remainder
    \begin{equation}\label{equ:remainder_form}
        \boldsymbol{R}_n(\theta)^\textsc{t}
        =
        (\theta - \theta_0)^T
        \partial_\theta 
        \boldsymbol{\mathcal{S}}(n,\theta^\star)
    \end{equation}
    where $\theta^\star$ lies on the line between $\theta_0$ and $\theta$ and is correctly defined on account of the convexity of $\Theta$. A sufficient condition for the stochastic equicontinuity of the remainder is therefore that $\sqrt{n} \{ n^{-1} \partial_\theta \boldsymbol{\mathcal{S}}(n,\theta) - \partial_\theta \Bar{\boldsymbol{S}}(\theta,\theta_0)\}$ be $O_{\Prob(\theta_0)}(1)$ uniformly in $\theta$.  We use again the notation of Lemma~\ref{lemma:clinet_like_derivative}, for all $s,\theta \in [0,T] \times \Theta$,
    \begin{equation*}
        Y^{(n)}_{k,ij}(s,\theta) = ( 
        \lambda_{k,s}^{(n)},
        \partial_{\theta_i} \lambda_{k,s}^{(n)}(s,\theta), \partial_{\theta_j} \lambda_{k,s}^{(n)},
        \partial_{\theta_i} \partial_{\theta_j} \lambda_{k,s}^{(n)})(\theta).
    \end{equation*}
    
     From the definition of the score~\eqref{equ:definition_of_score} and its derivative~\eqref{equ:score_derivative}, we have some continuously differentiable function $\Phi$ such that,
     \begin{equation*}
         \partial_\theta \boldsymbol{\mathcal{S}}(n,\theta)
         =
         \Big\{ 
         \sum_{k=1}^K
         \int_0^T \Phi\big( \frac{1}{n} Y^{(n)}_{k,ij}(s,\theta) \big) \dif N_{k,s}^{(n)}
         -
         \int_0^T \partial_{\theta_i} \partial_{\theta_j} 
         \lambda^{(n)}_{k,s}(\theta) \dif s
         \Big\}_{1 \leq i,j \leq d},
     \end{equation*}

    so that following a re-arrangement of the terms reminiscent of the proof for Lemma~\ref{lemma:sufficient_cv}, one has  
    \begin{align}
        \sqrt{n} ( 
            \frac{1}{n} \partial_\theta \boldsymbol{\mathcal{S}}(n,\theta)
            &-
            \partial_\theta  \Bar{\boldsymbol{\mathcal{S}}}(\theta,\theta_0)
        )_{ij}
        =
        \sqrt{n} \sum_{k=1}^K \int_0^T
                \{                \partial_{\theta_i}\partial_{\theta_j}
                \bar{\lambda}_k(s,\theta,\theta_0)
                -
                \partial_{\theta_i}\partial_{\theta_j}
                \frac{1}{n} \lambda_{k,s}^{(n)}(\theta) 
                \}
            \dif s 
            \nonumber
        \\
        &+
        \sqrt{n}\sum_{k=1}^K
    \int_0^T
        \Phi\big(  \Bar{Y}_{k,ij}(s,\theta,\theta_0)\big)
        \{ 
        \frac{1}{n} \dif N^{(n)}_{k,s}
        -
        h_k(s,\theta_0) \dif s
        \}\label{equ:sqrt_expansion}
    \\
     &+
    \sqrt{n} \sum_{k=1}^K
    \int_0^T
        \Phi\big( \frac{1}{n} Y_{k,ij}^{(n)}(s,\theta)\big)
        -
        \Phi\big(  \Bar{Y}_{k,ij}(s,\theta,\theta_0)\big)
    \frac{1}{n} \dif N^{(n)}_{k,s}.
    \nonumber
\end{align}

Consider the last term of~\eqref{equ:sqrt_expansion}.Re-arranging the terms again, for any $s \in [0,T]$,
\begin{align}
    \sqrt{n} \big\{ \Phi\big( \frac{1}{n} Y_{k,ij}^{(n)}(s,\theta)\big) 
        &-
    \Phi\big(  \Bar{Y}_{k,ij}(s,\theta,\theta_0)\big)
    \big\}
    =
    \nonumber
    \\
    &
    \frac{
        \frac{1}{n} \partial_{\theta_j} \lambda_{k,s}^{(n)}(\theta)  
    }
    {
        \frac{1}{n}\lambda_{k,s}^{(n)}(\theta)
    }
    \sqrt{n}
    \Big\{ 
            \frac{1}{n}\partial_{\theta_i} \lambda_{k,s}^{(n)}(\theta) 
            - 
            \partial_{\theta_i} \Bar{\lambda}_k(s,\theta,\theta_0)
    \Big\}
    \nonumber
    \\
    +&
    \frac{
        \partial_{\theta_i} \Bar{\lambda}_k(s,\theta,\theta_0) 
    }
    {
        \frac{1}{n}\lambda_{k,s}^{(n)}(\theta)
    }
    \sqrt{n}  
    \Big\{ 
                \frac{1}{n}   \partial_{\theta_j} \lambda_{k,s}^{(n)}(\theta) 
                - 
              \partial_{\theta_j} \Bar{\lambda}_k(s,\theta,\theta_0)
            \Big\}
    \nonumber
    \\
    +&
    \frac{
        \frac{1}{n}\partial_{\theta_i}\Bar{\lambda}_k(s,\theta,\theta_0) \partial_{\theta_j} \Bar{\lambda}_k(s,\theta,\theta_0)
    }
    {
        \frac{1}{n}\lambda_{k,s}^{(n)}(\theta)\Bar{\lambda}_k(s,\theta,\theta_0)
    }
    \sqrt{n}
    \Big\{ 
        \frac{1}{n}\lambda_{k,s}^{(n)}(\theta) 
        - 
         \Bar{\lambda}_k(s,\theta,\theta_0)
    \Big\} \label{equ:this_term_OP1} \\
    +&
    \frac{
        1
    }
    {
        \frac{1}{n}\lambda_{k,s}^{(n)}(\theta)
    }
    \sqrt{n}
    \Big\{ 
        \frac{1}{n} \partial_{\theta_i}
        \partial_{\theta_j} \lambda_k^{(n)}(s,\theta) 
        - 
        \partial_{\theta_i}
        \partial_{\theta_j}\Bar{\lambda}_k(s,\theta,\theta_0)
    \Big\} \nonumber
    \\
    +&
    \frac{ \partial_{\theta_i}\partial_{\theta_j}\Bar{\lambda}_k(s,\theta,\theta_0)
    }
    {
        \frac{1}{n}\lambda_{k,s}^{(n)}(\theta)
        \Bar{\lambda}_k(s,\theta,\theta_0)
    }
    \sqrt{n}
    \Big\{ 
        \frac{1}{n}  \lambda_{k,s}^{(n)}(\theta) 
        - 
        \Bar{\lambda}_k(s,\theta,\theta_0)
    \Big\}.
    \nonumber
\end{align}

 All five terms on the right-hand side of equation~\eqref{equ:this_term_OP1} write as 
\begin{equation*}
    U \Big( \frac{1}{n} Y_{k,ij}^{(n)} (t,\theta) \Big) 
    V \Big( \Bar{Y}_{k,ij}^{(n)} (t,\theta) \Big) 
    \sqrt{n} 
    \big\{
        \partial^p_{\theta_i} \partial^m_{\theta_j} \frac{1}{n} \lambda^{(n)}_{k,t}(\theta) 
        -
        \partial^p_{\theta_i} 
        \partial^m_{\theta_j}\Bar{\lambda}_k(t,\theta) 
    \big\},
\end{equation*}

where $U,V$ are some continuous functions, $p,m \in \{0,1\}$, and $i,j \in \segN{1,d}$.  Applying Corollary~\ref{coro:more_stuff} hence, one has that the third and last term on the right-hand side of equation~\eqref{equ:sqrt_expansion} is uniformly $O_{\Prob}(1)$. Now, the second term on the right-hand side of~\eqref{equ:sqrt_expansion} splits into 
\begin{align}
\sum_{k=1}^K
    \frac{1}{\sqrt{n}}
    \int_0^T
    \Phi( \Bar{Y}_{ij,k}(s, & \theta,\theta_0))
    \dif M^{(n)}_{k,s} 
    \nonumber
    \\ &+
    \int_0^T
    \Phi( \Bar{Y}_{ij,k}(s,\theta,\theta_0))
    \sqrt{n}
    (
    \frac{1}{n}\lambda^{(n)}_{k,s}(\theta_0) 
    -
    \Bar{\lambda}_{k}(s,\theta_0,\theta_0)
    )
    \dif s. \label{equ:split}
\end{align}

The last term of the sum~\eqref{equ:split} is uniformly $O_\Prob(1)$ as a direct consequence of Corollary~\ref{coro:more_stuff}. The deterministic function $\Bar{Y}$ being differentiable in $t$ by virtue of Assumption~\ref{ass:regularity_of_mu} and~\eqref{equ:separability_psi}, we may once again integrate by parts as in the proof of Corollary~\ref{coro:N_goes_to_H}. Hence the first term of~\eqref{equ:split} is uniformly bounded by
\begin{equation*}
\sup_{\theta \in \Theta} 
\sum_{k=1}^K
    \lvert \Phi( \Bar{Y}_{ij,k}(0,\theta,\theta_0) \rvert 
    +
    \int_0^T
     \lvert \partial_s  \Bar{Y}_{ij,k}(s,\theta,\theta_0)
     \Phi' ( \Bar{Y}_{ij,k}(s,\theta,\theta_0)) \rvert  \dif s
     \frac{1}{\sqrt{n}}\sup_{t \in [0,T]} \lVert M^{(n)}_{t} \rVert_1, 
\end{equation*}
which is also $O_\Prob(1)$ as per Lemma~\ref{lemma:Doob}. Finally, the first and remaining integral appearing in the right-hand side of~\eqref{equ:sqrt_expansion} is uniformly $O_\Prob(1)$ as a consequence of Corollary~\ref{coro:sqrt_consistency_coro}. Condition~\eqref{equ:equicontinuity_condition_for_tcl} is therefore verified.\\

    \textit{Third condition} The random vector $\boldsymbol{D}_n$ in~\eqref{equ:remainder_condition} verifies a central limit theorem
    \begin{equation}\label{equ:TCL_condition}
        \frac{1}{\sqrt{n}}\boldsymbol{D}_n \xrightarrow[n \to \infty]{\mathcal{L}(\Prob(\theta_0))} \mathcal{N}(0,A)
    \end{equation}
    Condition~\eqref{equ:TCL_condition} is Proposition~\ref{prop:score_is_normal_at_true_parameter}.\\

    \textit{Fourth condition} The \textsc{mle} is consistent
    \begin{equation}\label{equ:consistency_condition}
        \hat{\theta}_n 
        =
        \theta_0    
        +
        o_{\Prob(\theta_0)}(1)
    \end{equation}
    Condition~\eqref{equ:consistency_condition} is Proposition~\ref{prop:large_sample_consitency}.\\
    
    The conditions for the constrained \textsc{m}-estimator master theorem are therefore satisfied. Writing
    \begin{align}
    \Lambda_n
    &=
        2 \big( 
            \mathcal{L}(n,\hat{\theta}_n)
            -
            \mathcal{L}(n,\hat{\theta}_{n}^0
        \big)
        \nonumber
        \\
        &=2 \big( 
            \mathcal{L}(n,\hat{\theta}_n)
            -
            \mathcal{L}(n,\theta_{0})
        \big)
        -
        2 \big( 
        \mathcal{L}(n,\hat{\theta}_{n}^0)
            -
         \mathcal{L}(n,\theta_0) 
        \big),
        \label{equ:lambda_as_two_terms}
    \end{align}

    and applying Geyer's results~\cite{Geyer}[Theorem 4.4 and Remark p. 2004 to theorem 4.4], we obtain the weak convergence of  $\Lambda_n$ towards
    \begin{equation}\label{equ:result_of_geyer}
        \inf_{h \in \text{H}} \big\{ 
            2h^\textsc{t} \boldsymbol{Z} 
            -
            h^\textsc{t} I(\theta_0) h
        \big\} 
        -
        \inf_{h \in \text{H}_0} \big\{ 
           2h^\textsc{t} \boldsymbol{Z} 
            -
            h^\textsc{t} I(\theta_0) h
        \big\},
    \end{equation}

    where $\boldsymbol{Z} \sim \mathcal{N}(0,I(\theta_0))$. Re-arranging the terms as in Self \& Liang~\cite{Self-Liang}[equ. (2.1) p. 606], one has
    \begin{align}
        2h^\textsc{t} \boldsymbol{Z} 
        -
        h^\textsc{t} I(\theta_0)h
        &=
        2h^\textsc{t} \boldsymbol{Z} 
        -
        h^\textsc{t} \boldsymbol{Z} 
        -
        \boldsymbol{Z}^\textsc{t} h  
        +
        \boldsymbol{Z}^\textsc{t} 
        I(\theta_0)^{-1} \boldsymbol{Z}
        -
        \big(I(\theta_0)^{-1} \boldsymbol{Z}-h\big)^\textsc{t} I(\theta_0) \big(I(\theta_0)^{-1} \boldsymbol{Z}-h\big), \nonumber \\
        &=
        \boldsymbol{Z}^\textsc{t} I(\theta_0)^{-1} \boldsymbol{Z}
        -
        \big(I(\theta_0)^{-1} \boldsymbol{Z}-h\big)^\textsc{t} I(\theta_0) \big(I(\theta_0)^{-1} \boldsymbol{Z}-h\big).\label{equ:self-liang_simplification}
    \end{align}

    Inserting~\eqref{equ:self-liang_simplification} into~\eqref{equ:result_of_geyer} and simplifying redundant terms, the asymptotic distribution of $\Lambda_n$ is the one of
    \begin{equation}
        \inf_{h \in \text{H}_0} \big\{ (\Tilde{\boldsymbol{Z}} - h)^\textsc{t} I(\theta_0) ( \Tilde{\boldsymbol{Z}} -h)
        \big\}
        -
        \inf_{h \in \text{H}} \big\{ (\Tilde{\boldsymbol{Z}} - h)^\textsc{t} I(\theta_0) ( \Tilde{\boldsymbol{Z}}-h)
        \big\},
        \label{equ:result_of_geyer_as_two_terms}
    \end{equation}

    where $\Tilde{\boldsymbol{Z}} \sim \mathcal{N}(0,I(\theta_0)^{-1})$. Up to a matrix decomposition, this is the distribution of Theorem~\ref{thm:distribution_of_lambda_as_a_projection}. Under the alternative, the weak convergence of the left-hand side of~\eqref{equ:lambda_as_two_terms} towards the right-hand side of~\eqref{equ:result_of_geyer_as_two_terms} is preserved, while the right-hand side of~\eqref{equ:lambda_as_two_terms} is bounded from below by
    \begin{equation*}
        n \inf_{\theta \in \Theta_0} \frac{1}{n}
        \big(\mathcal{L}(n,\theta) 
        -
        \mathcal{L}(n,\theta_0) \big)
        = 
        n \big(\inf_{\theta \in \Theta_0}
        \Bar{\Lambda}(\theta,\theta_0)
        +
        o_{\Prob(\theta_0)}(1) \big)
    \end{equation*}
    as a result of Lemma~\ref{lemma:sufficient_cv}. As $\inf_{\theta \in \Theta_0}
        \Bar{\Lambda}(\theta,\theta_0)<0$ since $\theta_0$ is out of $\Theta_0$ in the alternative, this ensures the consistency of the test.

    \subsection{Proof of proposition~\ref{thmTCL}}\label{section:proof_of_TCL}
    From the previous discussion and Geyer~\cite{Geyer}[Theorem 4.4] still, 
    \begin{equation}
        \sqrt{n}( \hat{\theta}_n - \theta_0)
        \xrightarrow[n  \to \infty]{\mathcal{L}(\Prob(\theta_0))}
        \argmin_{\theta \in \text{H}+\theta_0}
        \big\{ \big( \Tilde{\boldsymbol{Z}} - (\theta-\theta_0))^\textsc{t} I(\theta_0) ( \Tilde{\boldsymbol{Z}}-(\theta-\theta_0)\big)
        \big\} ,\label{equ:large_MLE}\end{equation} 
        which is the first assertion of Proposition~\ref{thmTCL}, and
        \begin{equation}
        \sqrt{n}( \hat{\theta}_{n}^0 - \theta_0)
        \xrightarrow[n  \to \infty]{\mathcal{L}(\Prob(\theta_0))}
         \argmin_{\theta \in \text{H}_0+\theta_0}
        \big\{ \big(\Tilde{\boldsymbol{Z}} - (\theta-\theta_0))^\textsc{t} I(\theta_0) ( \Tilde{\boldsymbol{Z}}-(\theta-\theta_0)\big)
        \big\}, \label{equ:sub_MLE}
    \end{equation}

     where $\text{H}$ is the Painlevé-Kuratowski limit of $\sqrt{n}(\Theta - \{\theta_0\})$ and $\text{H}_0$ that of $\sqrt{n}(\Theta_0 - \{\theta_0\})$. Now, for any $u$ in some $\R^k$ and $l \in \segN{1,k-1}$, we denote by
    $u_{\leq l}$ the sub-vector made of the first $l$ coordinates of $u$. Similarly, for any $M \in \mathcal{M}_k(\R)$ we denote by $M_{\leq l, \leq l}$ the first $l \times l$ principal submatrix of $M$, by $M_{\leq l, >l}$ the $l \times (k-l)$ submatrix on its right, and so on. The sparse \textsc{mle} writes
    \begin{equation*}
        \hat{\theta}_{n}^0
        =
        \begin{bmatrix}
            \hat{\theta}_{1,n}^0
             \cdots 
             \hat{\theta}_{d-p,n}^0
            , 0 \cdots 0
        \end{bmatrix}^\textsc{t}
        =
        \begin{bmatrix}
            (\hat{\theta}_{n,\leq d-p}^0)^\textsc{t}
            , 0 \cdots 0
        \end{bmatrix}^\textsc{t}.   
    \end{equation*}
    Since $\theta_{0,\leq d-p}$ lies in the in the interior of its sub-parameter space, $\text{H}_0$ is a linear space and the $\argmin$ in~\eqref{equ:sub_MLE} is attained at the critical point $h^\star \in \R^{d-p}$ of 
    \begin{equation*}
    \begin{bmatrix}
        h_1, \cdots h_{d-p}
    \end{bmatrix}^\textsc{t}
        \mapsto
        ( \boldsymbol{Z}^\textsc{t} - 
        \begin{bmatrix}
            h_1 \cdots h_{d-p}, 0 \cdots 0 
        \end{bmatrix}
        )
        I(\theta_0)
        (\boldsymbol{Z}
        -
        \begin{bmatrix}
            h_1 \cdots h_{d-p}, 0 \cdots 0 
        \end{bmatrix}^\textsc{t}),
    \end{equation*}

    which expresses as
    \begin{equation}\label{equ:variance_to_re_express}
         h^\star = Z_{\leq d - p}
        -
        (I(\theta_0)_{\leq d-p, \leq d-p})^{-1}
        I(\theta_0)_{ \leq d-p,>d-p}
        Z_{>d-p}.
    \end{equation}

     After some elementary manipulation of bloc-matrix inverses, the variance of the Gaussian variable on the right-hand side of~\eqref{equ:variance_to_re_express} simplifies as the inverse of $I(\theta_0)_{\leq d-p, \leq d-p}$ (see Appendix~\ref{appendix:variance_simplification}). One finds
    \begin{equation*}
        \sqrt{n} (\hat{\theta}_{n,\leq d-p}^0 - \theta_{0,\leq d-p})
        \xrightarrow[n \to \infty]{\mathcal{L}(\Prob(\theta_0)}
        \mathcal{N} \big(0,(I(\theta_0)_{\leq d-p, \leq d-p})^{-1} \big).
    \end{equation*}
    This is the second and final assertion of Proposition~\eqref{thmTCL}.

\section{Acknowledgements}
The author would like to thank T. Deschatre, P. Gruet, and M. Hoffmann for helpful discussions
and their fruitful input during the preparation of this work; and E. Cognéville and F. Trieu for the
pre-processing of the German intraday power market order book.

\begin{appendices}
\section{Appendix}

\subsection{On the variance of the sparse \textsc{mle}}\label{appendix:variance_simplification}Lemma~\ref{Lemma:Schur} is part of the routine technicalities of likelihood-based model selection  (see for instance  Van der Vaart~\cite{VanDenVaartAsymptoticStatistics}[Problem 5 p. 241]). Since its short proof is rarely stated in full detail we recall it here for the sake of completeness.  The sub-vector notation is the one of section~\ref{section:proof_of_TCL} and we suppose, without loss of generality, that the null adjacency coefficients constitute the last $p$ coordinates of the true parameter $\theta_0$ . 
\begin{lemma}\label{Lemma:Schur}
    The asymptotic variance under $\Prob(\theta_0)$ of the first $d-p$ coordinates of $\sqrt{n}(\hat{\theta}_n^0 - \theta_0)$ is the inverse of  $I(\theta_0)_{\leq d-p, \leq d-p}$, where $I(\theta_0)$ is the $d \times d$ asymptotic information matrix.
\end{lemma}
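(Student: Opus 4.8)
The plan is to read the asymptotic variance straight off the explicit Gaussian limit of the sparse \textsc{mle} and reduce the claim to a standard block-matrix identity. Recall from~\eqref{equ:sub_MLE} that $\sqrt{n}(\hat{\theta}^0_n - \theta_0)$ converges in law to the minimiser over $h \in H_0 = \R^{d-p}\times\{0\}^p$ of the quadratic form $h \mapsto (\tilde{\boldsymbol{Z}} - h)^\textsc{t} I(\theta_0)(\tilde{\boldsymbol{Z}} - h)$, where $\tilde{\boldsymbol{Z}}\sim\mathcal{N}(0,I(\theta_0)^{-1})$. Splitting every vector into its first $d-p$ and last $p$ coordinates, write $\tilde{\boldsymbol{Z}} = (Z_1, Z_2)$ and partition the information matrix conformably as
\begin{equation*}
I(\theta_0) = \begin{bmatrix} A & B \\ B^\textsc{t} & C \end{bmatrix}, \qquad A = I(\theta_0)_{\leq d-p,\leq d-p}, \quad B = I(\theta_0)_{\leq d-p,>d-p},
\end{equation*}
and denote by $\Sigma = I(\theta_0)^{-1}$ the covariance of $\tilde{\boldsymbol{Z}}$, with blocks $\Sigma_{11}, \Sigma_{12}, \Sigma_{21}, \Sigma_{22}$.

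First I would carry out the minimisation explicitly. Writing $h = (u, 0)$ with $u \in \R^{d-p}$, the objective reads $u \mapsto (Z_1 - u)^\textsc{t} A (Z_1 - u) + 2 (Z_1 - u)^\textsc{t} B Z_2 + Z_2^\textsc{t} C Z_2$, and cancelling its gradient yields the critical point $u^\star = Z_1 + A^{-1} B Z_2$ of~\eqref{equ:variance_to_re_express}. The key remark is that $u^\star$ is precisely the residual of the population linear regression of $Z_1$ on $Z_2$: reading the off-diagonal block of the relation $I(\theta_0)\Sigma = \mathrm{Id}$ gives $A\Sigma_{12} + B\Sigma_{22} = 0$, hence $A^{-1}B = -\Sigma_{12}\Sigma_{22}^{-1}$ and $u^\star = Z_1 - \Sigma_{12}\Sigma_{22}^{-1} Z_2$.

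Then I would compute the covariance of this residual. A direct expansion, using $\Sigma_{21}=\Sigma_{12}^\textsc{t}$ and the symmetry of $\Sigma_{22}$, collapses to $\mathrm{Var}(u^\star) = \Sigma_{11} - \Sigma_{12}\Sigma_{22}^{-1}\Sigma_{21}$, the Schur complement of $\Sigma_{22}$ in $\Sigma$. To finish, I invoke the block-inversion identity that the Schur complement of the $(2,2)$ block of $\Sigma = I(\theta_0)^{-1}$ equals the inverse of the $(1,1)$ block of its inverse, i.e. $\Sigma_{11} - \Sigma_{12}\Sigma_{22}^{-1}\Sigma_{21} = A^{-1} = \big(I(\theta_0)_{\leq d-p,\leq d-p}\big)^{-1}$, which is exactly the asserted variance. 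The only genuine content is this last identity, which I would either quote (it is dual to the formula $\Sigma_{11} = (A - BC^{-1}B^\textsc{t})^{-1}$ for the corner of a block inverse) or verify in two lines by multiplying $I(\theta_0)$ against the candidate inverse; there is no real obstacle beyond this routine block algebra, the main care being to keep straight which Schur complement pertains to the information matrix and which to its inverse, and to track the sign so that $u^\star$ is identified with the regression residual rather than its mirror image.
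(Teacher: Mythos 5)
Your proof is correct and follows essentially the same route as the paper's: read the limit law off~\eqref{equ:sub_MLE}, solve the unconstrained quadratic minimisation over $H_0$ explicitly, and reduce the variance of the resulting Gaussian linear combination to the block-inversion/Schur-complement identity $\Sigma_{11}-\Sigma_{12}\Sigma_{22}^{-1}\Sigma_{21}=\big(I(\theta_0)_{\leq d-p,\leq d-p}\big)^{-1}$. Your regression-residual framing is a slightly cleaner way to organise the same algebra, and incidentally your sign $u^\star = Z_1 + A^{-1}BZ_2 = Z_1-\Sigma_{12}\Sigma_{22}^{-1}Z_2$ is the correct one (the sign shown in~\eqref{equ:variance_to_re_express} is a typo, and the sign does matter for the variance computation).
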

\begin{proof}
    Write
    \begin{equation*}
        I(\theta_0)
        =
        \begin{bmatrix}
            A & B \\
            B^\textsc{t} & C
        \end{bmatrix}.
    \end{equation*}
    where $A$ is a $(d-p) \times (d-p)$ submatrix. Then, using Schur's  notation, one has
    \begin{equation*}
        I(\theta_0)^{-1}
        =
        \begin{bmatrix}
            (\nicefrac{I}{D})^{-1} & (\nicefrac{I}{D})^{-1} B D^{-1} \\
            D^{-1} B^\textsc{t} (\nicefrac{I}{D})^{-1} & (\nicefrac{I}{A})^{-1}
        \end{bmatrix},
    \end{equation*}
    where the Schur complements
    \begin{align*}
        (\nicefrac{I}{D})^{-1} &= A - B D^{-1} B^\textsc{t}, \\
        (\nicefrac{I}{A})^{-1} &= D - B^\textsc{t} A^{-1} B,
    \end{align*}
    verify 
    \begin{align}
        (\nicefrac{I}{D})^{-1} 
        &= A^{-1} + A^{-1} B  
        (\nicefrac{I}{A})^{-1}
        B^\textsc{t} A^{-1},
        \label{equ:schur_formula1}
        \\
        (\nicefrac{I}{A})^{-1}
        &=
        D^{-1}
        +
        D^{-1} B^\textsc{t}
        (\nicefrac{I}{D})^{-1}
        B D^{-1}.
        \label{equ:schur_formula2}
    \end{align}
      From the proof of proposition~\ref{thmTCL} the asymptotic variance of 
    \begin{equation*}
        \sqrt{n}
        (\hat{\theta}_{n}^0
        -
        \theta_{0,\leq d-p}
        )
    \end{equation*}
    under $\Prob(\theta_0)$ is the variance of 
    \begin{equation*}
        \boldsymbol{Z}_{\leq d-p}
        -
        A^{-1} B^\textsc{t} \boldsymbol{Z}_{> d-p}.
    \end{equation*}
    whre $\boldsymbol{Z} \sim \mathcal{N}(0,I(\theta_0)^{-1})$. Following a straightforward simplification,
    \begin{equation*}
        \mathbb{V}ar(\boldsymbol{Z}_{\leq d-p}
        -
        A^{-1} B^\textsc{t} \boldsymbol{Z}_{> d-p})
        =
        (\nicefrac{I}{D})^{-1}
        -
        A^{-1} B
        (\nicefrac{I}{A})^{-1} B^\textsc{t} A^{-1}
    \end{equation*}
    which reformulates thanks to~\eqref{equ:schur_formula1} as
    \begin{equation*}
        A^{-1}
        =
        (I(\theta_0)_{\leq d-p, \leq d-p})^{-1}
    \end{equation*}
\end{proof}
\subsection{On the Chi-Bar Distribution}\label{annex:chi_bar_def}

The existence of the $\Bar{\chi}^2$ distribution as a mixture of $\chi^2$ distributions for regular enough models is a traditional result in non-standard likelihood theory. Proposition~\ref{Prop:chi_bar_mixture} recalls the correspondence between the Fisher information and the mixture weights of the chi-bar distribution for models parameterised on a closed orthant. This result was originally proposed by Kudo~\cite{Kudo}[Theorem 3.1 p. 414] and can also be regarded as a special case of Shapiro~\cite{Shapiro}[Theorem 3.1 p. 138]. We provide a short alternative proof to Kudo's~\cite{Kudo} to ensure the present paper remains self-contained.  Let $d \in \mathbb{N}^\star$ and define the orthant
\begin{equation}\label{equ:cone}
    C = \{ \theta=(\theta_i) \in \R^d \lvert \hspace{0.2cm}\theta_i \geq 0 \hspace{0.2cm} \text{for all i } \in \segN{1,d} \}.
\end{equation}

For any $p \in \segN{1,d}$ we denote by $\mathcal{P}_k^p$ the set of all combinations of size $k$ of $\segN{1,p}$, or, up to a bijection,
\begin{equation*}
    \mathcal{P}_k^p
    =
    \{ (i_1, \cdots, i_k) \lvert 1 \leq i_1 < \cdots < i_k \leq p \},
\end{equation*}

and for any $I_k \in \mathcal{P}_k^p$ we abuse the notation to denote 
\begin{equation*}
    ^c I_k = (j_1, \cdots, j_{p-k}), \hspace{0.2cm} j_1 < \cdots <j_{p-k}, \hspace{0.2cm} j_l \in \segN{1,p} \setminus I_k .
\end{equation*}

For any $p \in \segN{0,p}$, define also the sparse sub-spaces 
\begin{equation*}
    C_p = \{ \theta \in C \lvert \text{ there are exactly } p \hspace{0.1cm} \text{null coefficients in } \theta \}, 
\end{equation*}

and
\begin{equation*}
    C^{I_k} = \{ \theta \in C \: \lvert \: \theta_i = 0 \iff i \in I_k\},
\end{equation*}

so that for any $p\geq 1$, $C_p$ is the union of the disjoints sets $C^{I_k}$ with $I_k \in \mathcal{P}^p_k$,  and $C_0=C$. Finally we let $\mathcal{A}$ be a positive definite $d \times d$ real matrix. Picking $p \in \segN{1,d}$, we let 
\begin{equation*}
    m = \begin{bmatrix}
    0 & \cdots & 0 & m_{p+1} & \cdots & m_d
\end{bmatrix}^\textsc{t} \in C^{(1, 2, \cdots p)} \subset C_p,
\end{equation*} 

and $X \sim \mathcal{N}(m,\mathcal{A}^{-1})$, and  we define the quadratic form 
\begin{equation*}
    q^\mathcal{A}_X \colon z \in \R^d \mapsto (X-z)^\textsc{t} \mathcal{A}(X-z).
\end{equation*}

\begin{defi}[Chi-bar distribution]\label{definition:chi_bar}
     The $\Bar{\chi}_\mathcal{A}^2(p)$ distribution is the law of 
\begin{equation}\label{equ:chi_bar_definition}
    \ell^\mathcal{A}(X) = \min_{z \in C_p} q^\mathcal{A}_X(z) -  \min_{z \in C} q^\mathcal{A}_X(z).
\end{equation}

\end{defi}

In the rest of this article we have omitted the variance subscript from the $\Bar{\chi}^2$ notation to match conventions in force in the literature. Do note that regardless of this practice, a $\Bar{\chi}^2$ distribution cannot be characterised by its degrees of freedom alone. 
\begin{Proposition}\label{Prop:chi_bar_mixture}
    The Chi-bar law writes as a mixture of $\chi^2$ distributions
    \begin{equation*}
        \Bar{\chi}^2_\mathcal{A} (p)
        =
        \sum_{i=1}^p \omega^\mathcal{A}_i \chi^2(p-i),
    \end{equation*}
    
    where,  defining $F_{I_k} \sim N(0,(\mathcal{A}^{-1}_{I_k})^{-1})$ and $G_{I_k} \sim N(0,(\mathcal{A}_{^c I_k})^{-1})$  the mixing probabilities are given by
    \begin{equation*}
        \omega^\mathcal{A}_k 
        =
        \sum_{I_k \in \mathcal{P}_k^p } \Prob \big[ F_{I_k} \in {\R^{-}}^k \big] 
        \Prob \big[ G_{I_k} \in {\R^{+}}^{p-k} \times \R^{K-p} \big].
    \end{equation*}
\end{Proposition}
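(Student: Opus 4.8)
The plan is to reduce the $d$-dimensional problem to a projection onto the orthant $[0,\infty)^p$ of a centred Gaussian, and then to apply the classical face-decomposition of such a projection. Since the last $d-p$ coordinates are free in both $C$ and $C_p$, I would first profile them out. Writing $\mathcal{A}$ in block form with leading $p\times p$ block $A_{11}$ corresponding to the constrained coordinates and letting $S=A_{11}-A_{12}A_{22}^{-1}A_{21}$ be the Schur complement of the free block, minimising $q_X^\mathcal{A}$ over the free coordinates for fixed $z_{\le p}$ collapses the objective to $(X_{\le p}-z_{\le p})^\textsc{t} S (X_{\le p}-z_{\le p})$. Hence $\min_{z\in C_p} q_X^\mathcal{A}=X_{\le p}^\textsc{t} S X_{\le p}$, while $\min_{z\in C} q_X^\mathcal{A}$ is the squared $S$-distance of $X_{\le p}$ to $[0,\infty)^p$; using that the residual of a projection onto a cone is $S$-orthogonal to the projection, the two minima combine into $\ell^\mathcal{A}(X)=\lVert\Pi Y\rVert_S^2$, where $Y:=X_{\le p}\sim\mathcal{N}(0,S^{-1})$ (the mean vanishes because the first $p$ coordinates of $m$ are null) and $\Pi$ is the $S$-orthogonal projector onto $[0,\infty)^p$. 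In particular the statistic does not depend on the nuisance means $m_{p+1},\dots,m_d$, so the null law is well defined.

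Next I would partition the probability space, up to $\Prob$-null sets, according to the face on which $\Pi Y$ lands. For $F\subset\segN{1,p}$ let $\{\operatorname{supp}(\Pi Y)=F\}$ be the event that exactly the coordinates in $F$ are strictly positive. The Karush--Kuhn--Tucker conditions characterise this event through a primal vector $U_F:=Y_F+S_{FF}^{-1}S_{FF^c}Y_{F^c}$, required to be componentwise positive, and a dual vector $W_{F^c}:=(S/S_{FF})Y_{F^c}$, required to be componentwise nonpositive; on this event $\ell^\mathcal{A}(X)=U_F^\textsc{t} S_{FF} U_F$. The crux of the proof is the linear-algebraic claim that $U_F$ and $W_{F^c}$ are \emph{independent} Gaussians with $U_F\sim\mathcal{N}(0,(S_{FF})^{-1})$ and $W_{F^c}\sim\mathcal{N}(0,(\Sigma_{F^cF^c})^{-1})$, where $\Sigma=S^{-1}$. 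I expect this to be the main obstacle: it follows from the block-inverse identities $\Sigma_{FF^c}=-S_{FF}^{-1}S_{FF^c}\Sigma_{F^cF^c}$ and $S_{FF}\Sigma_{FF}+S_{FF^c}\Sigma_{F^cF}=I$, which give $\operatorname{Cov}(U_F)=\Sigma_{FF}-S_{FF}^{-1}S_{FF^c}\Sigma_{F^cF^c}S_{F^cF}S_{FF}^{-1}=S_{FF}^{-1}$ and $\operatorname{Cov}(U_F,W_{F^c})=0$.

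Identifying $I_k=F^c$ (the $k=|F^c|$ null coordinates), one has $\Sigma_{F^cF^c}=\mathcal{A}^{-1}_{I_k}$, so $W_{F^c}$ is distributed as $F_{I_k}$; and the marginal on the active coordinates of $\mathcal{N}(0,(\mathcal{A}_{^c I_k})^{-1})$ equals $\mathcal{N}(0,(S_{FF})^{-1})$, which is the law of $U_F$ and is precisely what produces the unconstrained factor $\R^{K-p}$ in the event for $G_{I_k}$. By independence this yields $\Prob[\operatorname{supp}(\Pi Y)=F]=\Prob[F_{I_k}\in(\R^-)^k]\,\Prob[G_{I_k}\in(\R^+)^{p-k}\times\R^{K-p}]$.

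Finally I would identify the conditional law on each face. Writing $U_F=(S_{FF})^{-1/2}Z$ with $Z\sim\mathcal{N}(0,I_{|F|})$, the quadratic form $U_F^\textsc{t} S_{FF}U_F=\lVert Z\rVert_2^2$ is $\chi^2(|F|)$, and the sign event $\{U_F>0\}$ is scale-invariant, hence depends only on the direction $Z/\lVert Z\rVert_2$, which is independent of $\lVert Z\rVert_2^2$ for a standard Gaussian. Consequently the distribution of $\ell^\mathcal{A}(X)$ conditional on $\{\operatorname{supp}(\Pi Y)=F\}$ is $\chi^2(|F|)$, independent of the conditioning, and $\Prob[\ell^\mathcal{A}(X)\le t,\operatorname{supp}=F]=\Prob[\operatorname{supp}=F]\,\Prob[\chi^2(|F|)\le t]$. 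Summing over all $F$ with $|F^c|=k$ and collecting the face probabilities into $\omega_k^\mathcal{A}$ gives $\ell^\mathcal{A}(X)\sim\sum_{k}\omega_k^\mathcal{A}\,\chi^2(p-k)$, the claimed mixture.
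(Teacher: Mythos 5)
Your argument is correct and, at its core, follows the same route as the paper's: both proofs rest on a face-by-face decomposition of the projection onto the orthant via the Karush--Kuhn--Tucker conditions, and both obtain the mixture weights from the independence of the primal and dual KKT vectors, established through block-inverse (Schur) identities — your pair $(U_F,W_{F^c})$ is, up to sign and indexing, the paper's $(Z^{(k)},Y^{(k)})$, and your covariance computations $\mathrm{Cov}(U_F)=S_{FF}^{-1}$, $\mathrm{Cov}(W_{F^c})=(\Sigma_{F^cF^c})^{-1}$, $\mathrm{Cov}(U_F,W_{F^c})=0$ match the paper's. Where you differ is in the packaging: you first profile out the $d-p$ unconstrained coordinates, reducing everything to the projection of $Y=X_{\le p}\sim\mathcal{N}(0,S^{-1})$ onto $[0,\infty)^p$ in the $S$-metric and writing $\ell^{\mathcal{A}}(X)=\lVert \Pi Y\rVert_S^2$ by the Moreau/Pythagoras identity, whereas the paper keeps the two minima separate and runs KKT in $\R^d$ for each; your reduction has the merit of making it transparent that the statistic depends only on $X_{\le p}$, so the nuisance means $m_{p+1},\dots,m_d$ are irrelevant, and of identifying $((\mathcal{A}_{^cI_k})^{-1})_{FF}=(S_{FF})^{-1}$ cleanly, which explains the free factor $\R^{K-p}$ in the event defining the weights. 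You are also more explicit on a point the paper passes over quickly: the conditional law on a face is exactly $\chi^2(\lvert F\rvert)$ because the sign event $\{U_F>0\}$ depends only on the direction of $S_{FF}^{1/2}U_F$, which is independent of its squared norm. One remark on the setup: you treat the last $d-p$ coordinates as genuinely free, while the Annex literally defines $C$ as the full orthant of $\R^d$; your reading is the intended one (it matches $H=(0,\infty)^p\times\R^{d-p}$ from the main text and the paper's own KKT system, which puts zero multipliers on those coordinates), but it is worth flagging the discrepancy.
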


\begin{proof} In the rest of the proof, denote 
    
    \begin{align*}
        \Tilde{z} =\argmin_{z \in C} ( z - X)^T \mathcal{A} (z-X)
        \hspace{0.5cm}
        \text{and}
        \hspace{0.5cm}
        \Tilde{z}^p =\argmin_{z \in C_p} ( z - X)^T \mathcal{A} (z-X).
    \end{align*}
    Let us first clarify the distribution of the first term on the right of~\eqref{equ:chi_bar_definition}. By Karush-Kuhn-Tucker (\textsc{kkt}), there exist some random coefficients $\lambda^T \in {R^+}^p$  such that $\lambda_i  \Tilde{z}_i^p = 0$ for  $i = 1\hdots k$, and

    \begin{equation}\label{equ:KKT}
        ( \Tilde{z}^p - X)^T \mathcal{A}
        =
        \begin{bmatrix}
            \lambda_1 & \cdots & \lambda_p  & 0 & \cdots & 0
        \end{bmatrix}
        =
        \begin{bmatrix}
            \lambda & 0 & \cdots & 0
        \end{bmatrix}
    \end{equation}
    which given that $\Tilde{z}^p _1 = \Tilde{z}^p_p=0$ as $\Tilde{z}^p \in C_p$ yields 

    \begin{equation}\label{equ:dist_of_lambda}
        \lambda   = (( \mathcal{A}^{-1})_{\leq p, \leq p})^{-1} \:  X_{\leq p} \sim \mathcal{N}(0, (\mathcal{A}^{-1}_{\leq p, \leq p})^{-1}).
    \end{equation}

    Plugging~\eqref{equ:KKT} back into $q^\mathcal{A}_X$,

    \begin{align*}
        \min_{z \in C_p} q^\mathcal{A}_X(z) 
        =
        ( \Tilde{z}^p- X)^T \mathcal{A} (\Tilde{z}^p - X)
        =
        \begin{bmatrix}
            \lambda & 0 & \cdots & 0
        \end{bmatrix}
        ( \Tilde{z}^p - X)
        =
        \lambda^T 
        (\mathcal{A}^{-1})_{\leq p, \leq p} \lambda \\
    \end{align*}

    which together with~\eqref{equ:dist_of_lambda} shows that $\min_{z \in C_p} q^\mathcal{A}_X(z) $ follows a $ \chi^2(p)$ distribution and writes as

    \begin{equation*}
        \min_{z \in C_p} q^\mathcal{A}_X(z) = \sum_{k=1}^p \frac{X_k^2}{\mathcal{A}^{-1}_{kk}}.
    \end{equation*}

    We may actually proceed in a similar fashion for the second term of~\eqref{equ:dist_of_lambda}. Let $k \in \segN{1,p}$. The event $\{ \Tilde{z} \in C_k \}$ decomposes into the mutually exclusive $\{ \Tilde{z} \in C^{I_k} \}$, $I_k \in \mathcal{P}^p_k$,  upon each of which the behaviour of $\ell^\mathcal{A}_X$ is the same. We may assume $I_k = \segN{1,k}$ without loss of generality.  On the event $\{ \Tilde{z} \in C^{\segN{1,k}} \}$ then, $\Tilde{z}$ coincides with the solution of the minimisation program of $q^\mathcal{A}_X$ over $C^{\segN{1,k}}$, which is strongly dual, and verifies the \textsc{kkt} conditions~\eqref{equ:KKT} with $p$ replaced by $k$. In particular,

    \begin{equation}\label{equ:KKT_above_k}
        Y^{(k)} = ((\mathcal{A}^{-1})_{\leq k \leq k})^{-1} X_{\leq k} = - 
        \begin{bmatrix}
            \lambda_1 \cdots  \lambda_k
        \end{bmatrix}^T
        \in {\R^{-}}^k,
    \end{equation}

    and, re-inserting the preceding expression for $\lambda$ back into relation~\eqref{equ:KKT},

    \begin{equation}\label{equ:kkt_below_k}
                 Z^{(k)} = X_{> k} - (\mathcal{A}^{-1})_{> k \leq k} Y^{(k)}= \Tilde{z}_{>k} \in {\mathbb{R}^{+\star}}^{p-k} \times \R^{K-p}.
    \end{equation}

 From the same argument as for the first term, it follows from~\eqref{equ:KKT_above_k} that conditionally on $\{\Tilde{z} \in C^{I_k} \}$,

    \begin{equation*}
        \min_{z \in C}  q^\mathcal{A}_X(z) 
        =
        \lambda^T (\mathcal{A}^{-1})_{\leq k \leq k } \lambda
        = \sum_{i \in I_k } \frac{X_i^2}{\mathcal{A}^{-1}_{ii}}
        \hspace{0.5cm}
        \text{and}
        \hspace{0.5cm}
        \ell^\mathcal{A}(X)
        =
        \sum_{i \notin I_k } \frac{X_i^2}{\mathcal{A}^{-1}_{ii}}
        \sim \chi^2(p-i).
    \end{equation*}
    
    Note furthermore that relation~\eqref{equ:KKT_above_k} implies $\lambda_i$ taking value $0$ is a negligible event and the $\lambda$'s can be taken almost surely strictly positive.   Reciprocally, when conditions~\eqref{equ:KKT_above_k} and~\eqref{equ:kkt_below_k} hold, that is, $Y^{(k)} < 0$ and $Z^{(k)}_{\leq p-k}>0$ coordinate-wise, the pair $(x,\lambda) \in C^{I_k} \times {R^{+\star}}^k$ defined by $\lambda=- (\mathcal{A}^{-1}_{\leq k \leq k})^{-1} X_{\leq k}$ and $\Tilde{z}= \begin{bmatrix}
        0 & \cdots & 0 & Z^{(k)} 
    \end{bmatrix}$ satisfy the \textsc{kkt} conditions for the minimisation problem of $q^\mathcal{A}_X$ over $C$. Its solution $\Tilde{z}$ then lies in $C^{I_k}$ and therefore in $C_k$. We have shown the event $\{ \Tilde{z} \in C^{\segN{1,k}}\}$ can be rewritten as $\{ Y^{(k)} <0, Z^{(k)}_{\leq p-k}>0\}$. Remarking furthermore from simple matrix manipulations and Schur's formulas that

\begin{align*}
    \mathbb{C}\text{ov}(Y^{(k)},Y^{(k)}) 
    &=  ((\mathcal{A}^{-1})_{\leq k\leq k})^{-1}\\
    \mathbb{C}\text{ov}(Z^{(k)},Y^{(k)}) 
    &= 
    (\mathcal{A}^{-1})_{>k\leq k} ((\mathcal{A}^{-1})_{\leq k\leq k})^{-1}
    -
    (\mathcal{A}^{-1})_{>k\leq k} \mathbb{C}\text{ov}(Y^{(k)},Y^{(k)}) = 0\\
    \mathbb{C}\text{ov}(Z^{(k)},Z^{(k)}) 
    &= 
    (\mathcal{A}^{-1})_{>k > k}
    -
     (\mathcal{A}^{-1})_{>k\leq k}  ((\mathcal{A}^{-1})_{\leq k\leq k})^{-1} (\mathcal{A}^{-1})_{\leq k > k} 
     =
     (\mathcal{A}_{>k,>k})^{-1},
\end{align*}

one observes $Y^{(k)}$ and $Z^{(k)}$ to be independent. Up to a re-arrangement of the coordinate of $X$, the same arguments hold for any other $I_k \in \mathcal{P}^n_k$  Summing over all possible configurations for $I_k$,  this gives the announced expression for the mixture weight $\omega^\mathcal{A}_k$. There remains only the situation of $\Tilde{z}$ falling in the interior of $C$, in which case the second term of~\eqref{equ:chi_bar_definition} vanishes and the distribution of $\ell^\mathcal{A}_X$ is a  $\chi^2(p)$ law, as would traditionally be observed in absence of positivity constraints. This happens with probability $\Prob[ X_{\leq p} \in {\R^+}^p ]$.

\end{proof}

\begin{remark}
    In all generality, the expression for the weights have no closed form. In the case $K=p=2$ however,
    $
        \Prob[ F \in \R^- \times \R^-]
        =
        \Prob[ H \in \mathcal{A}^{- \nicefrac{1}{2}}\R^{-} \times \R^- ]
    $
    where $H \sim \mathcal{N}(0,I_2)$ and re-expresses as the portion of the unit ball intersecting with $\mathcal{A}^{- \nicefrac{1}{2}}\R^{-} \times \R^-$, which is given by the angle between $\mathcal{A}^{- \nicefrac{1}{2}} \begin{bmatrix}
        -1 &  0
    \end{bmatrix}$ and $\mathcal{A}^{- \nicefrac{1}{2}} \begin{bmatrix}
        0 &  -1    \end{bmatrix}$, that is, foregoing a few matrix manipulation,

    \begin{align*}
        \omega_2 
        =
        \frac{1}{2\pi}
        \cos^{-1} \big( \nicefrac{\mathcal{A}_{12}}{\sqrt{\mathcal{A}_{11}\mathcal{A}_{22}}}\big)
        =
        \frac{1}{2 \pi}
        \big\{
            \pi
            - 
            \cos^{-1} \big( \nicefrac{\mathcal{A}^{-1}_{12}}{\sqrt{\mathcal{A}^{-1}_{11}\mathcal{A}^{-1}_{22}}}\big)
        \big\}
        =
        \frac{1}{2}
        -
        \omega_0.
    \end{align*}

    This is example 7 from Self \& Liang~\cite{Self-Liang}.
\end{remark}

\end{appendices}

\bibliographystyle{plain}
\bibliography{bibli}

\end{document}